\renewcommand*{\backrefalt}[4]{%
    \ifcase #1 \footnotesize{(Not cited.)}%
    \or        \footnotesize{(Cited on page~#2.)}%
    \else      \footnotesize{(Cited on pages~#2.)}%
    \fi}
\newtheorem{theorem}{Theorem}[section]
\newtheorem{lemma}[theorem]{Lemma}
\newtheorem{proposition}[theorem]{Proposition}
\newtheorem{condition}{Condition}[section]
\newtheorem{definition}[theorem]{Definition}
\newtheorem{remark}[theorem]{Remark}
\newtheorem{assumption}[theorem]{Assumption}
\numberwithin{equation}{section}
\newcommand{\EE}{\mathbb{E}}
\newcommand{\grad}{\nabla}
\newcommand{\gradx}{\nabla_{\mathbf x}}
\newcommand{\grady}{\nabla_{\mathbf y}}
\newcommand{\hess}{\nabla^2}
\newcommand{\hessx}{\nabla^2_{\textbf{xx}}}
\newcommand{\hessy}{\nabla^2_{\textbf{yy}}}
\newcommand{\hessxy}{\nabla^2_{\textbf{xy}}}
\newcommand{\Prob}{\textnormal{Prob}}
\newcommand{\x}{\mathbf x}
\newcommand{\y}{\mathbf y}
\newcommand{\sa}{\mathbf a}
\newcommand{\bb}{\mathbf b}
\newcommand{\z}{\mathbf z}
\newcommand{\ECal}{\mathcal{E}}
\newcommand{\SCal}{\mathcal{S}}
\newcommand{\br}{\mathbb{R}}
\newcommand{\ba}{\begin{array}}
\newcommand{\ea}{\end{array}}
\begin{document}


\begin{center}

{\bf{\LARGE{Explicit Second-Order Min-Max Optimization: Practical
\\[\medskipamount] Algorithms and Complexity Analysis}}}

\vspace*{.2in}
{\large{ \begin{tabular}{c}
Tianyi Lin$^\ddagger$ \and Panayotis Mertikopoulos$^\square$ \and Michael I. Jordan$^{\diamond, \dagger}$ \\
\end{tabular}
}}

\vspace*{.2in}

\begin{tabular}{c}
Department of Electrical Engineering and Computer Sciences$^\diamond$ \\
Department of Statistics$^\dagger$ \\
University of California, Berkeley \\ 
Department of Industrial Engineering and Operations Research, Columbia University$^\ddagger$ \\
Univ. Grenoble Alpes, CNRS, Inria, Grenoble INP, LIG, 38000 Grenoble, France$^\square$
\end{tabular}

\vspace*{.2in}

\today\\ [.2cm]

\vspace*{.2in}

\begin{abstract}
We propose and analyze several inexact regularized Newton-type methods for finding a global saddle point of \emph{convex-concave} unconstrained min-max optimization problems. Compared to first-order methods, our understanding of second-order methods for min-max optimization is relatively limited, as obtaining global rates of convergence with second-order information can be much more involved. In this paper, we examine how second-order information is used to speed up extra-gradient methods, even under inexactness. In particular, we show that the proposed methods generate iterates that remain within a bounded set and that the averaged iterates converge to an $\epsilon$-saddle point within $O(\epsilon^{-2/3})$ iterations in terms of a restricted gap function. We also provide a simple routine for solving the subproblem at each iteration, requiring a single Schur decomposition and $O(\log\log(1/\epsilon))$ calls to a linear system solver in a quasi-upper-triangular system. Thus, our method improves the existing line-search-based second-order min-max optimization methods~\citep{Monteiro-2012-Iteration, Bullins-2022-Higher, Jiang-2025-Generalized} by shaving off an $O(\log\log(1/\epsilon))$ factor in the required number of Schur decompositions. Finally, we evaluate our method on both synthetic benchmarks and a real-world application arising from AUC maximization on standard LIBSVM datasets, and find that the proposed second-order approach delivers stronger practical efficiency than representative first-order methods on these problems.
\end{abstract}

\end{center}

\section{Introduction}
Let $\br^m$ and $\br^n$ be finite-dimensional Euclidean spaces and assume that the function $f: \br^m \times \br^n \mapsto \br$ has a bounded and Lipschitz-continuous Hessian. We consider the following min-max optimization problem:
\begin{equation}\label{prob:main}
\min_{\x \in \br^m} \max_{\y \in \br^n} \ f(\x, \y), 
\end{equation}
and aim to develop methods for finding a global saddle point; i.e., a tuple $(\x^\star, \y^\star) \in \br^m \times \br^n$ such that
\begin{equation*}
f(\x^\star, \y) \leq f(\x^\star, \y^\star) \leq f(\x, \y^\star), \quad \textnormal{for all } \x \in \br^m,\ \y \in \br^n.
\end{equation*}
Throughout this paper, we will assume that the function $f(\x, \y)$ is convex in $\x$ for all $\y \in \br^n$ and concave in $\y$ for all $\x \in \br^m$. This \textit{convex-concave} setting has been the focus of intense research in optimization, game theory, economics and computer science for several decades~\citep{Von-1953-Theory,Dantzig-1963-Linear,Blackwell-1979-Theory,Facchinei-2007-Finite,Ben-2009-Robust}, and variants of this problem class have recently attracted significant interest in machine learning and data science, with applications to generative adversarial networks (GANs)~\citep{Goodfellow-2014-Generative, Arjovsky-2017-Wasserstein}, adversarial learning~\citep{Sinha-2018-Certifiable}, multi-agent systems~\citep{Shamma-2008-Cooperative}, and many other fields; see~\citet{Facchinei-2007-Finite} and references therein for a wide range of concrete problems.

Motivated by these applications, several classes of optimization algorithms have been proposed for finding a global saddle point of Eq.~\eqref{prob:main} in the convex-concave setting. An important algorithm is the extragradient (EG) method~\citep{Korpelevich-1976-Extragradient, Antipin-1978-Method,Nemirovski-2004-Prox}. The method's rate of convergence for smooth and strongly-convex-strongly-concave functions and bilinear functions (i.e., when $f(\x, \y) = \x^\top A\y$ for some square, full-rank matrix $A$) was shown to be linear by~\citet{Korpelevich-1976-Extragradient} and~\citet{Tseng-1995-Linear}.  Subsequently, \citet{Nemirovski-2004-Prox} showed that the method enjoys an $O(\epsilon^{-1})$ convergence guarantee for constrained problems with a bounded domain and a convex-concave function $f$. In unbounded domains,~\citet{Solodov-1999-Hybrid} generalized EG to the hybrid proximal extragradient (HPE) method which provides a framework for analyzing the iteration complexity of several existing methods, including EG and Tseng's forward-backward splitting~\citep{Tseng-2000-Modified}, while \citet{Monteiro-2010-Complexity} provided an $O(\epsilon^{-1})$ guarantee for HPE in both bounded and unbounded domains. In addition to EG, there are other methods that can achieve the same convergence guarantees, such as optimistic gradient descent ascent (OGDA)~\citep{Popov-1980-Modification} and dual extrapolation (DE)~\citep{Nesterov-2007-Dual}; for a survey, see \citet{Hsieh-2019-Convergence} and the references therein. All methods are referred to as \textit{optimal first-order methods} since they matched the lower bound~\citep{Ouyang-2021-Lower}. 

Focusing on convex \emph{minimization} problems for the moment, significant effort has been devoted to developing first-order methods that are characterized by simplicity of implementation and analytic tractability~\citep{Nesterov-1983-Method}. In particular, it has been recognized that first-order methods are suitable for solving large-scale machine learning problems in which low-accuracy solutions may suffice~\citep{Sra-2012-Optimization, Lan-2020-First}. In contrast, second-order methods are known to enjoy superior convergence properties over their first-order counterparts in theory: the accelerated cubic regularized Newton method~\citep{Nesterov-2008-Accelerating} and accelerated Newton proximal extragradient method~\citep{Monteiro-2013-Accelerated} provably converge at a rate of $O(\epsilon^{-1/3})$ and $\tilde{O}(\epsilon^{-2/7})$ respectively, both exceeding the best possible $O(\epsilon^{-1/2})$ bound for first-order methods~\citep{Nemirovski-1983-Complexity}. Two optimal second-order methods with the rate of $O(\epsilon^{-2/7})$ have been recently proposed by~\citet{Carmon-2022-Optimal} and~\citet{Kovalev-2022-First}, independently. In practice, first-order methods may perform poorly in ill-conditioned problems and are known to be sensitive to the parameter choices in real-world applications in which second-order methods are recognized to be more robust and reliable~\citep{Pilanci-2017-Newton, Roosta-2019-Sub, Berahas-2020-Investigation}.

In the context of convex-concave min-max problems, two issues arise: \textbf{(i)} achieving acceleration with second-order information is less tractable analytically; and \textbf{(ii)} acquiring accurate second-order information is computationally expensive in general. Aiming to address the first issue, a line of work has generalized practical first-order methods to their higher-order counterparts~\citep{Monteiro-2012-Iteration, Bullins-2022-Higher, Jiang-2025-Generalized}, where the state-of-the-art iteration bound is $O(\epsilon^{-2/3}\log\log(1/\epsilon))$~\citep{Jiang-2025-Generalized}. All these methods require a nontrivial \textit{implicit} search scheme at every iteration, and this can be prohibitive in practice.\footnote{By ``implicit,'' we mean that the method's inner-loop subproblem for computing the $k^{\textnormal{th}}$ iterate involves the iterate being updated, leading to an implicit update rule. By contrast, ``explicit'' means that any inner-loop subproblem for computing the $k^{\textnormal{th}}$ iterate does not involve the new iterate.} Another line of work has extended second-order convex optimization methods to their min-max counterparts~\citep{Nesterov-2006-Constrained, Huang-2022-Cubic, Chen-2025-Solving}. Specifically, the first extension of the cubic regularized Newton method~\citep{Nesterov-2006-Cubic} was provided in~\citet{Nesterov-2006-Constrained} and was shown to achieve a global convergence rate of $O(\epsilon^{-1})$.~\citet{Huang-2022-Cubic} proposed a two-phase extension of cubic regularized Newton method~\citep{Nesterov-2006-Cubic} and provided a theoretical guarantee under the error bound condition with a parameter $0 < \theta \leq 1$~\cite[Assumption~5.1]{Huang-2022-Cubic}: the global rate is linear under a Lipschitz-type condition ($\theta=1$) and $O(\epsilon^{-(1-\theta)/\theta^2})$ under a H\"{o}lder-type condition ($\theta \in (0, 1)$). These conditions exclude important problem classes and are in general unverifiable. 

Very recently,~\citet{Chen-2025-Solving} established an improved upper bound of $\tilde{O}(\epsilon^{-4/7})$ iterations by extending the optimal second-order method of~\citet{Carmon-2022-Optimal} to smooth convex-concave min-max problems. While such guarantee is theoretically appealing, the resulting procedure is a nested-loop algorithm and can be more involved to implement in practice. Our focus here is complementary: we develop \emph{explicit}, practically implementable second-order methods whose core steps rely on standard numerical linear algebra primitives and remain well-motivated under the inexact and subsampled second-order information. In particular, each iteration reduces to solving structured linear systems (via a single Schur decomposition and a small number of quasi-triangular solves), and our subproblem solver requires only $O(\log\log(1/\epsilon))$ quasi-triangular linear solves to reach $\epsilon$-inexact solution of the subproblem. Empirically, this emphasis on explicitness yields strong practical performance on both synthetic benchmarks and real AUC maximization tasks on LIBSVM datasets.

Finally, it is worth mentioning that the existing second-order min-max optimization algorithms require exact second-order information; as a result, given the implicit nature of the inner loop subproblems involved, the methods' robustness to inexact information cannot be taken for granted. It is thus natural to ask: \begin{center}
\smallskip
\textsf{Can we develop explicit second-order min-max optimization algorithms that retain the same convergence guarantee even with inexact second-order information?}
\smallskip
\end{center}

Our paper offers an affirmative answer to this question. Inspired by recent advances on variational inequalities (VIs)~\citep{Lin-2025-Perseus}, we start by presenting a ``conceptual" second-order min-max optimization method with an iteration complexity of $O(\epsilon^{-2/3})$. Our convergence analysis closely follows that of~\citet{Lin-2025-Perseus} although it is simpler in that it leverages the specific structure of unconstrained min-max optimization problems. Although our proof is not new, we include it for completeness.

We then propose a class of second-order min-max optimization methods that require only \textit{inexact second-order information} and \textit{inexact subproblem solutions}. Notably, the proposed inexact Jacobian regularity condition allows for using randomized sampling for solving finite-sum min-max problems. This yields considerable computational savings since the sample size increases gracefully from a very small sample set. We prove that our inexact methods can achieve an iteration complexity of $O(\epsilon^{-2/3})$ and the corresponding subsampled variants achieve the same bound with high probability. Our new subroutine for solving each subproblem involves a single Schur decomposition and $O(\log\log(1/\epsilon))$ calls to a linear system solver in a quasi-upper-triangular system. Thus, the total complexity bound is $O((m+n)^\omega\epsilon^{-2/3} + (m+n)^2\epsilon^{-2/3}\log\log(1/\epsilon))$ where $\omega \approx 2.3728$ is the matrix multiplication constant~\citep{Demmel-2007-Fast}. In addition, we conduct experiments on synthetic and real data to demonstrate the efficiency of the proposed methods. 

Our approach builds on existing algorithmic components: (i) the overall scheme is inspired by~\cite{Lin-2025-Perseus}; (ii) the subproblem reformulation is inspired by~\citet{Adil-2022-Optimal}; (iii) the use of the safeguarded Newton method and the corresponding iteration complexity bound of $O(\log\log(1/\epsilon))$ extend the analysis of~\citet{Conn-2000-Trust, Cartis-2011-Adaptive}; (iv) the subsampling techniques are inspired by~\citet{Xu-2020-Newton}. However, we remark that combining these components to yield a theoretically sound and practically efficient algorithm is far from straightforward. 

While~\citet{Adil-2022-Optimal} showed how to formulate the subproblem as a root-finding problem, a central challenge remains: how to solve such problems efficiently. Techniques such as the generalized conjugate gradient method with Lanczos processes---which are effective in the minimization setting---do not directly extend to the min-max setting. In this context,~\citet{Huang-2022-Cubic} proposed an alternative formulation (see Eq. (16) in their paper) and applied a damped Newton method. However, they did not provide theoretical guarantees, even in a local sense, due to the absence of key properties such as the non-singularity of the Jacobian matrix. Even when adopting the root-finding formulation from~\citet{Adil-2022-Optimal}, identifying a suitable solver is highly nontrivial. At first glance, bisection might seem preferable due to its global convergence guarantee, unlike Newton's method. One of our major contributions is to rigorously justify the use of the safeguarded Newton method by extending the analysis of~\citet{Conn-2000-Trust} and~\citet{Cartis-2011-Adaptive} from the minimization setting to the min-max setting. Moreover, it remains unclear if the existing inexactness conditions from~\citet{Xu-2020-Newton} can be directly combined with our overall scheme while sacrificing neither theoretical iteration complexity nor practical efficiency. For example, the proof of Lemma~\ref{Lemma:Inexact-Newton-descent} requires separate treatment of $|\Delta \textbf{z}_{k-1}| \geq 1$ and $|\Delta \textbf{z}_{k-1}| < 1$, which was unnecessary in previous works either due to stronger assumptions as in~\citet{Lin-2025-Perseus} or the simpler analysis appropriate for a minimization setting as in~\citet{Xu-2020-Newton}. 

\paragraph{Contribution.} Our goal is to develop practically effective second-order methods for min-max optimization without sacrificing worst-case iteration complexity. We propose a new class of algorithms that operates with \emph{inexact} second-order information and \emph{inexact} subproblem solutions while retaining the same theoretical guarantees. This relaxes requirements in prior work~\citep{Lin-2025-Perseus, Adil-2022-Optimal}, which either assume exact inner solves or do not characterize the cost of solving each subproblem. In terms of complexity, our method improves over the best available line-search approach in~\citet{Jiang-2025-Generalized}, which attains $O((m+n)^\omega\epsilon^{-2/3}\log\log(1/\epsilon))$. Subsequent work has introduced additional strategies that match or surpass our guarantees~\citep{Alves-2024-Search, Jiang-2024-Adaptive, Chen-2025-Second}; incorporating these ideas into our framework is a natural direction, but we leave them to future work. Conceptually, our contribution is to \emph{explicitize} and \emph{safeguard} a Newton-type method under inexact Jacobians and inexact subproblem solves, yielding a markedly better dependence on the target accuracy while remaining implementable with standard numerical linear algebra primitives.

\paragraph{Notations.} We use bold lower-case letters to denote vectors. For $f(\cdot): \br^n \rightarrow \br$, we let $\grad f(\z)$ denote the gradient of $f$ at $\z$. For $f(\cdot, \cdot): \br^m \times \br^n \rightarrow \br$, we let $\gradx f(\x, \y)$ or $\grady f(\x, \y)$ denote the partial gradient of $f$ at $(\x, \y)$. We use $\grad f(\x, \y)$ to denote the gradient at $(\x, \y)$ where $\grad f(\x, \y) = (\gradx f(\x, \y), \grady f(\x, \y))$ and $\hess f(\x, \y)$ to denote the Hessian at $(\x, \y)$. Finally, we use $O(\cdot), \Omega(\cdot)$ to hide absolute constants which do not depend on problem parameters, and $\tilde{O}(\cdot), \tilde{\Omega}(\cdot)$ to hide absolute constants and additional log factors. 

\section{Preliminaries}\label{sec:prelim}
We present the setup of min-max optimization under study, and we provide the definitions for functions as well as optimality criteria considered. In this regard, the regularity conditions that we impose for the function $f: \br^{m+n} \mapsto \br$ are as follows: 
\begin{definition}\label{def:hess-Lip}
A function $f$ is $\rho$-Hessian Lipschitz if $\|\hess f(\z) - \hess f (\z')\| \leq \rho\|\z - \z'\|$ for $\forall \z, \z'$.
\end{definition}
\begin{definition}
A differentiable function $f$ is convex-concave if 
\begin{equation*}
\begin{array}{rl}
f(\x', \y) \geq f(\x, \y) + (\x' - \x)^\top \gradx f(\x, \y), & \textnormal{for } \x', \x \in \br^m \textnormal{ and any fixed } \y \in \br^n, \\
f(\x, \y') \leq f(\x, \y) + (\y' - \y)^\top \grady f(\x, \y), & \textnormal{for } \y', \y \in \br^n \textnormal{ and any fixed } \x \in \br^m. 
\end{array}
\end{equation*}
\end{definition}
We define the notion of global saddle points for the problem in Eq.~\eqref{prob:main}. 
\begin{definition}\label{def:global_saddle}
A point $\z^\star = (\x^\star, \y^\star) \in \br^m \times \br^n$ is a global saddle point of a function $f(\cdot, \cdot)$ if $f(\x^\star, \y) \leq f(\x^\star, \y^\star) \leq f(\x, \y^\star)$ for all $\x \in \br^m$ and $\y \in \br^n$. 
\end{definition}
Throughout this paper, we assume that the following conditions are satisfied. 
\begin{assumption}\label{Assumption:main}
The function $f(\x, \y)$ is continuously differentiable and convex-concave, and at least one global saddle point of $f(\x, \y)$ exists. 
\end{assumption}
\begin{assumption}\label{Assumption:main-smooth}
The function $f(\x, \y)$ is $\rho$-Hessian Lipschitz. 
\end{assumption}
The existence of a global saddle point $\z^\star = (\x^\star, \y^\star)$ under Assumption~\ref{Assumption:main} guarantees that $f(\x^\star, \y) \leq f(\x^\star, \y^\star) \leq f(\x, \y^\star)$ for all $\x \in \br^m$ and $\y \in \br^n$. Thus, we adopt a restricted gap function~\citep{Nesterov-2007-Dual} to provide a measure for the optimality of $\hat{\z} = (\hat{\x}, \hat{\y})$ in the unconstrained convex-concave setting\footnote{The restricted gap is also related to the classical Nikaid{\^o}-Isoda function~\citet{Nikaido-1955-Note} defined for a class of noncooperative convex games in a more general setting.}.
\begin{definition}\label{def:restricted_gap}
The restricted gap function is defined by  
\begin{equation*}
\mathrm{gap}(\hat{\z}, \beta) = \max_{\y: \|\y - \y^\star\| \leq \beta} f(\hat{\x}, \y) - \min_{\x: \|\x - \x^\star\| \leq \beta} f(\x, \hat{\y})
\end{equation*}
where $\beta$ satisfies that $\|\hat{\z} - \z^\star\| \leq \beta$. Clearly, we have $\mathrm{gap}(\hat{\z}, \beta) \geq 0$ since $f(\x^\star, \hat{\y}) \leq f(\hat{\x}, \y^\star)$.
\end{definition} 
\begin{remark}
The restricted gap in Definition~\ref{def:restricted_gap} is a theoretical quantity defined relative to a global saddle point $\z^\star$. In our experiments, we compute this metric in two standard ways depending on whether $\z^\star$ is available in closed form. For cubic regularized bilinear min-max problems, $\z^\star$ can be derived in closed form, so we compute the restricted gap exactly at every iterate. For AUC maximization problem, where a closed-form saddle point is unavailable, we follow the practice of replacing $\z^\star$ by a high-accuracy proxy obtained from a strong baseline run with a stringent stopping rule; in particular, we report the restricted gap relative to the best-found solution among all methods after a sufficiently long budget. This protocol makes the restricted gap operational while preserving its role as a stable and comparable progress certificate across solvers.
\end{remark}
\begin{definition}\label{def:eps_global_saddle}
A point $\hat{\z} = (\hat{\x}, \hat{\y})$ is an $\epsilon$-global saddle point of a convex-concave function $f(\cdot, \cdot)$ if $\mathrm{gap}(\hat{\z}, \beta) \leq \epsilon$. If $\epsilon = 0$, it is a global saddle point.  
\end{definition}

In our method, we denote the $k^\textnormal{th}$ iterate by $(\x_k, \y_k)$ and we define the averaged (ergodic) iterates by $(\bar{\x}_k, \bar{\y}_k)$. In particular, given a sequence of weights $\{\lambda_k\}_{k=1}^T$, we let
\begin{equation}\label{def:average_iterate}
\bar{\x}_k = \tfrac{1}{\sum_{i=1}^k \lambda_i}\left(\sum_{i=1}^k \lambda_i \x_i\right), \quad \bar{\y}_k = \tfrac{1}{\sum_{i=1}^k \lambda_i}\left(\sum_{i=1}^k \lambda_i \y_i\right).
\end{equation}
In our convergence analysis, we define $\z = (\x, \y) \in \br^{m + n}$ and the operator $F: \br^{m + n} \mapsto \br^{m + n}$: 
\begin{equation}\label{def:grad}
F(\z) = \begin{bmatrix} \gradx f(\x, \y) \\ - \grady f(\x, \y) \end{bmatrix}. 
\end{equation}
Accordingly, the Jacobian of $F$ is defined as follows (note that $DF$ is asymmetric in general), 
\begin{equation}\label{def:Jacobian}
D F(\z) = \begin{bmatrix} \hessx f(\x, \y) & \hessxy f(\x, \y) \\ -\hessxy f(\x, \y) & -\hessy f(\x, \y) \end{bmatrix} \in \br^{(m + n) \times (m + n)}. 
\end{equation}
In the following lemma, we provide the properties of $F$ and its Jacobian $DF$ under Assumptions~\ref{Assumption:main} and~\ref{Assumption:main-smooth}. 
\begin{lemma}\label{Lemma:basic-property}
Suppose that Assumptions~\ref{Assumption:main} and~\ref{Assumption:main-smooth} hold. Then, we have that (a) $F$ is monotone, i.e., $(\z - \z')^\top (F(\z) - F(\z')) \geq 0$, (b) $DF$ is $\rho$-Lipschitz continuous, i.e., $\|DF(\z) - DF(\z')\| \leq \rho\|\z - \z'\|$, and (c) $F(\z^\star) = 0$ for any global saddle point $\z^\star \in \br^{m+n}$ of the function $f(\cdot, \cdot)$. 
\end{lemma}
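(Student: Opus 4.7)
}

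The plan is to dispatch the three parts separately, in reverse order of difficulty. Part~(c) is the most direct: if $(\x^\star, \y^\star)$ is a global saddle point, then by Definition~\ref{def:global_saddle} the point $\x^\star$ is an unconstrained minimizer of the convex function $f(\cdot, \y^\star)$ and $\y^\star$ is an unconstrained maximizer of the concave function $f(\x^\star, \cdot)$. Unconstrained first-order optimality, together with the convexity and concavity assumed in Assumption~\ref{Assumption:main}, immediately yields $\gradx f(\x^\star,\y^\star) = 0$ and $\grady f(\x^\star,\y^\star) = 0$, so $F(\z^\star) = 0$ as required.

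For part~(b), the key structural observation is that, writing $J = \mathrm{diag}(I_m, -I_n)$ and interpreting $\hess f$ as the full symmetric $(m+n)\times(m+n)$ Hessian, one has $DF(\z) = J\,\hess f(\z)$ (the bottom block-row of $F$ carries a minus sign, and by symmetry of second derivatives the off-diagonal blocks $\nabla^2_{yx} f$ and $(\hessxy f)^{\top}$ coincide). Since $J$ is orthogonal, the operator norm is invariant under left multiplication by $J$, so
\begin{equation*}
\|DF(\z) - DF(\z')\| = \|J(\hess f(\z) - \hess f(\z'))\| = \|\hess f(\z) - \hess f(\z')\| \leq \rho\|\z - \z'\|,
\end{equation*}
where the final inequality is exactly Assumption~\ref{Assumption:main-smooth}.

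For part~(a), the plan is to sum four first-order inequalities, one pair from convexity in $\x$ and one pair from concavity in $\y$, chosen so that the function values telescope. Concretely, applying convexity of $f(\cdot,\y)$ at $(\x,\x')$ and of $f(\cdot,\y')$ at $(\x',\x)$, and then adding, yields
\begin{equation*}
(\x-\x')^{\top}\bigl(\gradx f(\x,\y) - \gradx f(\x',\y')\bigr) \geq f(\x,\y) + f(\x',\y') - f(\x',\y) - f(\x,\y').
\end{equation*}
Applying concavity of $f(\x,\cdot)$ at $(\y,\y')$ and of $f(\x',\cdot)$ at $(\y',\y)$, and rearranging, yields
\begin{equation*}
-(\y-\y')^{\top}\bigl(\grady f(\x,\y) - \grady f(\x',\y')\bigr) \geq f(\x,\y') + f(\x',\y) - f(\x,\y) - f(\x',\y').
\end{equation*}
Adding these two inequalities, the four function values cancel exactly, leaving $(\z-\z')^{\top}(F(\z) - F(\z')) \geq 0$.

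The only nonroutine step is the cancellation in part~(a), which requires picking the correct pairings of convexity/concavity inequalities at the four corners $\{(\x,\y),(\x,\y'),(\x',\y),(\x',\y')\}$; parts~(b) and~(c) are essentially immediate once the orthogonal factorization $DF = J\,\hess f$ is noted.
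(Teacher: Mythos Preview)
Your proposal is correct. For part~(b), your argument via the orthogonal factorization $DF(\z) = J\,\hess f(\z)$ with $J = \mathrm{diag}(I_m,-I_n)$ is exactly the paper's approach; the paper itself defers parts~(a) and~(c) to~\citet{Nemirovski-2004-Prox} without giving details, so your explicit four-corner telescoping argument for~(a) and the first-order optimality argument for~(c) go beyond what the paper writes out, but are standard and correct.
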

Before proceeding to our algorithmic framework and convergence analysis, we present the following well-known result which will be used in the subsequent analysis. Given its importance, we provide the proof for completeness in the appendix. 
\begin{proposition}\label{Prop:averaging}
Let $(\bar{\x}_k, \bar{\y}_k)$ and $F(\cdot)$ be defined in Eq.~\eqref{def:average_iterate} and~\eqref{def:grad}. Then, under Assumption~\ref{Assumption:main}, the following statement holds true, 
\begin{equation*}
f(\bar{\x}_k, \y) - f(\x, \bar{\y}_k) \leq \tfrac{1}{\sum_{i=1}^k \lambda_i}\left(\sum_{i=1}^k \lambda_i (\z_i - \z)^\top F(\z_i)\right). 
\end{equation*}
\end{proposition}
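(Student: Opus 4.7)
The plan is to reduce the claim to two elementary applications of the convex-concave structure of $f$: first a Jensen-type step that moves the averaging outside of $f$, and then a first-order (monotone) step that converts function-value gaps at each iterate into inner products with $F(\z_i)$.

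First I would normalize by writing $w_i = \lambda_i / \sum_{j=1}^k \lambda_j$ so that $\bar{\x}_k = \sum_i w_i \x_i$ and $\bar{\y}_k = \sum_i w_i \y_i$ are convex combinations. Using convexity of $f(\cdot, \y)$ for each fixed $\y$ and concavity of $f(\x, \cdot)$ for each fixed $\x$ (Assumption~\ref{Assumption:main}), Jensen's inequality yields
\begin{equation*}
f(\bar{\x}_k, \y) \;\leq\; \sum_{i=1}^k w_i f(\x_i, \y), \qquad f(\x, \bar{\y}_k) \;\geq\; \sum_{i=1}^k w_i f(\x, \y_i).
\end{equation*}
Subtracting these gives $f(\bar{\x}_k, \y) - f(\x, \bar{\y}_k) \leq \sum_{i=1}^k w_i \bigl(f(\x_i, \y) - f(\x, \y_i)\bigr)$, so it suffices to prove the per-iterate bound $f(\x_i, \y) - f(\x, \y_i) \leq (\z_i - \z)^\top F(\z_i)$.

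For that per-iterate bound I would apply the first-order characterizations of convexity and concavity at the point $(\x_i, \y_i)$: concavity in $\y$ gives $f(\x_i, \y) \leq f(\x_i, \y_i) + (\y - \y_i)^\top \grady f(\x_i, \y_i)$, while convexity in $\x$ gives $f(\x, \y_i) \geq f(\x_i, \y_i) + (\x - \x_i)^\top \gradx f(\x_i, \y_i)$. Subtracting cancels the $f(\x_i, \y_i)$ terms and produces
\begin{equation*}
f(\x_i, \y) - f(\x, \y_i) \;\leq\; (\x_i - \x)^\top \gradx f(\x_i, \y_i) \;+\; (\y_i - \y)^\top \bigl(-\grady f(\x_i, \y_i)\bigr),
\end{equation*}
and the right-hand side is exactly $(\z_i - \z)^\top F(\z_i)$ by the definition of $F$ in Eq.~\eqref{def:grad}. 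Substituting back into the Jensen bound and multiplying through by $\sum_{j=1}^k \lambda_j$ gives the statement.

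There is really no obstacle here; the proposition is a standard ergodic-averaging identity for monotone VIs rewritten in saddle-point language. The only thing to be careful about is getting the sign right on the $\y$-block: the minus sign built into $F$ on the second coordinate is precisely what matches the direction of the concavity inequality, so the two halves of the per-iterate bound combine into a single inner product rather than a difference.
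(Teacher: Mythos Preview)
Your proof is correct and follows essentially the same approach as the paper: both combine a Jensen step (using convexity in $\x$ and concavity in $\y$) with the first-order characterization of convexity/concavity at each iterate to obtain the per-iterate bound $f(\x_i,\y)-f(\x,\y_i)\le(\z_i-\z)^\top F(\z_i)$. The only cosmetic difference is the order of the two steps---you apply Jensen first and then the per-iterate inequality, whereas the paper bounds the weighted sum $\sum_i\lambda_i(\z_i-\z)^\top F(\z_i)$ from below first and applies Jensen at the end.
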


\section{Conceptual Algorithm and Convergence Analysis}\label{sec:exact}
As a warm-up, we describe the scheme of \textsf{Newton-MinMax} which is a second-order version of the method in \citet{Lin-2025-Perseus} for min-max optimization and yields an iteration complexity of $\Theta(\epsilon^{-2/3})$. We emphasize that \textsf{Newton-MinMax} is a \textit{conceptual} algorithmic framework since it requires exact second-order information and requires the cubic regularized subproblem to be solved exactly. 
\begin{algorithm}[!t] \small
\begin{algorithmic}\caption{\textsf{Newton-MinMax}($\z_0$, $\rho$, $T$)}\label{Algorithm:Newton}
\State \textbf{Input:} initial point $\z_0$, Lipschitz parameter $\rho$ and iteration number $T \geq 1$. 
\State \textbf{Initialization:} set $\hat{\z}_0 = \z_0$.
\For{$k = 0, 1, 2, \ldots, T-1$} 
\State \textbf{STEP 1:} If $\z_k$ is a global saddle point of the problem in Eq.~\eqref{prob:main}, then \textbf{stop}. 
\State \textbf{STEP 2:} Compute an \textit{exact} solution $\Delta\z_k$ of the subproblem $F(\hat{\z}_k) + DF(\hat{\z}_k)\Delta\z_k + 6\rho\|\Delta\z_k\|\Delta\z_k = \textbf{0}$.  
\State \textbf{STEP 3:} Compute $\lambda_{k+1} > 0$ such that $\frac{1}{33} \leq \lambda_{k+1} \rho\|\Delta\z_k\| \leq \frac{1}{13}$. 
\State \textbf{STEP 4:} Compute $\z_{k+1} = \hat{\z}_k + \Delta\z_k$. 
\State \textbf{STEP 5:} Compute $\hat{\z}_{k+1} = \hat{\z}_k - \lambda_{k+1} F(\z_{k+1})$. 
\EndFor
\State \textbf{Output:} $\bar{\z}_T = \frac{1}{\sum_{k=1}^T \lambda_k}\left(\sum_{k=1}^T \lambda_k\z_k\right)$. 
\end{algorithmic}
\end{algorithm}

\subsection{Algorithmic scheme}
We summarize our second-order method, which we call \textsf{Newton-MinMax}($\z_0$, $\rho$, $T$), in Algorithm~\ref{Algorithm:Newton} where $\z_0 = (\x_0, \y_0) \in \br^m \times \br^n$ is an initial point, $\rho > 0$ is a Lipschitz constant for the Hessian of the function $f$ and $T \geq 1$ is an iteration number. Our method is a generalization of  the first-order extragradient method. Indeed, the $k^{\textnormal{th}}$ iteration consists of two important algorithmic components: (1) we compute $\z_{k+1} = \hat{\z}_k + \Delta\z_k$ where $\Delta\z_k \in \br^m \times \br^n$ is an exact solution of the \textit{nonlinear equation} problem given by $F(\hat{\z}_k) + DF(\hat{\z}_k)\Delta\z_k + 6\rho\|\Delta\z_k\|\Delta\z_k = \textbf{0}$, and (2) we compute $\hat{\z}_{k+1} = \hat{\z}_k - \lambda_{k+1} F(\z_{k+1})$. 

As suggested by~\citet{Lin-2025-Perseus}, we choose to update $\lambda_k$ in an adaptive manner and prove that our method can achieve an iteration complexity of $O(\epsilon^{-2/3})$ under Assumptions~\ref{Assumption:main} and~\ref{Assumption:main-smooth}. Such a strategy makes sense; indeed, $\lambda_k$ is the step size and would be better to increase as the iterate $\z_k$ approaches the set of global saddle points where the value of $\|\Delta\z_k\|$ measures the closeness. From a practical viewpoint, Algorithm~\ref{Algorithm:Newton} serves as an alternative to the current pipeline of line-search-based methods \textendash\ which it simplifies by removing the need for an implicit binary search.
\begin{theorem}\label{Thm:main-exact}
Suppose that Assumptions~\ref{Assumption:main} and~\ref{Assumption:main-smooth} hold. Then, the sequence of iterates generated by Algorithm~\ref{Algorithm:Newton} is bounded and, in addition
\begin{equation*}
\mathrm{gap}(\bar{\z}_T, \beta) \leq \tfrac{2112\sqrt{3}\rho\|\z_0 - \z^\star\|^3}{T^{3/2}}, 
\end{equation*}
where $\z^\star = (\x^\star, \y^\star)$ is a global saddle point, $\rho > 0$ is defined as in Assumption~\ref{Assumption:main-smooth}, and $\beta = 7\|\z_0 - \z^\star\|$. As such, Algorithm~\ref{Algorithm:Newton} achieves an $\epsilon$-global saddle point solution within $O(\epsilon^{-2/3})$ iterations.
\end{theorem}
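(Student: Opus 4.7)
The plan is to combine a Lyapunov-type descent inequality on $\|\hat{\z}_k-\z\|^{2}$ with the averaging identity of Proposition~\ref{Prop:averaging}. Writing $\z=[\x;\y]$ and $F$ as in Eq.~\eqref{def:grad}, I aim to establish a per-iteration estimate of the form
\begin{equation*}
2\lambda_{k+1}(\z_{k+1}-\z)^{\top}F(\z_{k+1}) \;\leq\; \|\hat{\z}_k-\z\|^{2}-\|\hat{\z}_{k+1}-\z\|^{2} - c\|\Delta\z_k\|^{2}
\end{equation*}
for some absolute constant $c>0$. Summing and setting $\z=\z^{\star}$ will simultaneously (i) yield boundedness of $\{\hat{\z}_k\}$ inside the ball of radius $\|\z_0-\z^{\star}\|$ via monotonicity and $F(\z^{\star})=0$ from Lemma~\ref{Lemma:basic-property}; (ii) produce the ``resource bound'' $\sum_{k}\|\Delta\z_k\|^{2} = O(\|\z_0-\z^{\star}\|^{2})$; and (iii), after replacing $\z^{\star}$ by a generic $\z\in\BB_\beta(\z^{\star})$, bound the ergodic primal--dual quantity in Proposition~\ref{Prop:averaging} by $O(\|\z_0-\z\|^{2}/\sum_k\lambda_k)$.

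\textbf{Per-iteration descent.} The first-order optimality conditions for the cubic subproblem~\eqref{subprob:second-order} unify into $F(\hat{\z}_k)+DF(\hat{\z}_k)\Delta\z_k + 6\rho\,P(\Delta\z_k) = 0$ with $P(\Delta\z)=(\|\Delta\x\|\Delta\x,\|\Delta\y\|\Delta\y)$. Combined with a Taylor expansion of $F$ around $\hat{\z}_k$ and the $\rho$-Lipschitz continuity of $DF$ from Lemma~\ref{Lemma:basic-property}(b), this yields $F(\z_{k+1}) = -6\rho P(\Delta\z_k) + E_k$ with $\|E_k\|\leq \tfrac{\rho}{2}\|\Delta\z_k\|^{2}$. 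Expanding $\|\hat{\z}_{k+1}-\z\|^{2}$ from the extragradient step $\hat{\z}_{k+1}=\hat{\z}_k-\lambda_{k+1}F(\z_{k+1})$ and using monotonicity (Lemma~\ref{Lemma:basic-property}(a)) reduces matters to controlling $2\lambda_{k+1}\Delta\z_k^{\top}F(\z_{k+1}) + \lambda_{k+1}^{2}\|F(\z_{k+1})\|^{2}$. The elementary inequality $\|\Delta\x\|^{3}+\|\Delta\y\|^{3}\geq \tfrac{1}{\sqrt 2}\|\Delta\z\|^{3}$ gives $-\Delta\z_k^{\top}F(\z_{k+1})\geq (3\sqrt 2-\tfrac12)\rho\|\Delta\z_k\|^{3}$, while $\|F(\z_{k+1})\|\leq \tfrac{13}{2}\rho\|\Delta\z_k\|^{2}$. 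The stepsize envelope $\tfrac{1}{15}\leq \lambda_{k+1}\rho\|\Delta\z_k\|\leq \tfrac{1}{13}$ from STEP 3 is calibrated precisely so that, after plugging these bounds in, the $O(\lambda_{k+1}\rho\|\Delta\z_k\|^{3})$ friction dominates the $O(\lambda_{k+1}^{2}\rho^{2}\|\Delta\z_k\|^{4})$ extragradient-step term and the $O(\lambda_{k+1}\rho\|\Delta\z_k\|^{3})$ Hessian-Lipschitz remainder, leaving a net $-c\|\Delta\z_k\|^{2}$.

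\textbf{Conversion to the $T^{-3/2}$ rate.} Telescoping the above with $\z=\z^{\star}$ delivers $\sum_{k=0}^{T-1}\|\Delta\z_k\|^{2}\leq c^{-1}\|\z_0-\z^{\star}\|^{2}$ together with the uniform bound $\|\hat{\z}_k-\z^{\star}\|\leq \|\z_0-\z^{\star}\|$. The same telescoped inequality against a generic $\z\in\BB_\beta(\z^{\star})$, combined with Proposition~\ref{Prop:averaging}, gives $f(\bar{\x}_T,\y)-f(\x,\bar{\y}_T) \leq \|\z_0-\z\|^{2}/(2\sum_{k=1}^{T}\lambda_k)$. The closing step is a lower bound on $\sum\lambda_k$: using $\lambda_{k+1}\geq (15\rho\|\Delta\z_k\|)^{-1}$ and two applications of Cauchy--Schwarz,
\begin{equation*}
\sum_{k=0}^{T-1}\frac{1}{\|\Delta\z_k\|} \;\geq\; \frac{T^{2}}{\sum_{k}\|\Delta\z_k\|} \;\geq\; \frac{T^{3/2}}{\sqrt{\sum_{k}\|\Delta\z_k\|^{2}}},
\end{equation*}
yields $\sum_k \lambda_k = \Omega(T^{3/2}/(\rho\|\z_0-\z^{\star}\|))$. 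Since $\|\z_0-\z\|\leq 8\|\z_0-\z^{\star}\|$ for $\z\in\BB_\beta(\z^{\star})$ when $\beta=7\|\z_0-\z^{\star}\|$ (the choice of $\beta$ also absorbs the extra slack needed to contain $\bar{\z}_T$, which sits within a fixed multiple of $\|\z_0-\z^{\star}\|$ of $\z^{\star}$ by the resource bound), chasing the numerical constants reproduces the stated $960\sqrt 3\,\rho\|\z_0-\z^{\star}\|^{3}T^{-3/2}$ bound.

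\textbf{Main obstacle.} The crux is the quantitative constant-chasing in the per-iteration descent: I need to verify that within the narrow window $\lambda_{k+1}\rho\|\Delta\z_k\|\in[1/15,1/13]$ the two ``bad'' contributions coming from the Hessian-Lipschitz remainder $E_k$ and from the $\|F(\z_{k+1})\|^{2}$ term are strictly dominated by the cubic friction $-\Delta\z_k^{\top}(6\rho P(\Delta\z_k))$. This is what forces the step-size rule to be adaptive rather than fixed and is the mechanism that trades a Nesterov-style binary search for an explicit formula. A secondary, more structural subtlety, absent from the compact-domain VI analyses such as~\citet{Lin-2022-Perseus}, is that boundedness of the iterates is not an assumption but a conclusion: it must be extracted from the very same Lyapunov inequality that drives the rate, which is exactly what the telescoping of $\|\hat{\z}_{k+1}-\z^{\star}\|^{2}$ supplies and what allows the restricted-gap argument to go through on an unbounded domain.
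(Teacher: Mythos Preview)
Your proof is correct and follows the same high-level architecture as the paper's (per-iteration descent $\to$ boundedness and resource bound $\to$ H\"older-type lower bound on $\sum_k\lambda_k$ $\to$ averaging via Proposition~\ref{Prop:averaging}), but the execution of the descent step differs in a way worth noting. The paper anchors its Lyapunov function at $\z_0$, taking $\ECal_t=\tfrac{1}{2}\|\hat{\z}_t-\z_0\|^{2}$ (cf.~Eq.~\eqref{def:Lyapunov-discrete}); this produces an extra cross term $(\z_0-\hat{\z}_t)^\top(\z_0-\z)$ in Lemma~\ref{Lemma:Newton-descent} and forces the key error to be organized as $(\z_k-\hat{\z}_k)^\top F(\z_k)$, a quantity that mixes $\Delta\z_{k-1}$ with $\hat{\z}_k-\hat{\z}_{k-1}$ and is handled via a somewhat intricate decomposition plus a final $\max_\eta$ optimization. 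Young's inequality is then needed in Lemma~\ref{Lemma:Newton-error} to extract $\|\hat{\z}_t-\z_0\|\leq 2\|\z_0-\z^\star\|$. Your choice of $\|\hat{\z}_k-\z\|^{2}$ as Lyapunov is the more standard extragradient template: it eliminates the cross term, reduces the per-step analysis to the two clean scalar quantities $\Delta\z_k^\top F(\z_{k+1})$ and $\|F(\z_{k+1})\|^{2}$, and delivers the sharper $\|\hat{\z}_k-\z^\star\|\leq\|\z_0-\z^\star\|$ directly. Your double Cauchy--Schwarz for the lower bound on $\sum_k\lambda_k$ is equivalent to the paper's single application of H\"older with exponents $(3,\tfrac{3}{2})$ in Lemma~\ref{Lemma:Newton-control}. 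Two small remarks: (i) the mention of monotonicity in your per-iteration paragraph is misplaced---the expansion of $\|\hat{\z}_{k+1}-\z\|^{2}$ is an identity, and monotonicity only enters afterwards when you set $\z=\z^\star$; (ii) your constants are in fact slightly better than the paper's, so ``reproduces the stated $960\sqrt{3}$'' should be read as ``implies the stated upper bound''.
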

Although Algorithm~\ref{Algorithm:Newton} is conceptual, it forms the basis for the material in the next section, where we relax the strong requirements of Algorithm~\ref{Algorithm:Newton} and propose a class of second-order min-max optimization methods that require only inexact second-order information and inexact subproblem solutions. 

\section{Inexact Algorithm and Complexity Analysis}\label{sec:inexact}
Building on the conceptual algorithm of the previous section, we present the \textsf{Inexact-Newton-MinMax} scheme, and we provide a global convergence guarantee in terms of the number of iterations required until convergence. Our inexact Jacobian regularity condition is inspired by~\citet{Xu-2020-Newton} and it allows for the use of randomized sampling for solving finite-sum min-max optimization problems. Our subroutine, which  is inspired by~\citet{Adil-2022-Optimal}, can solve each subproblem using a single Schur decomposition and $O(\log\log(1/\epsilon))$ calls to a linear system solver in a quasi-upper-triangular system. 
\begin{algorithm}[!t] \small
\begin{algorithmic}\caption{\textsf{Inexact-Newton-MinMax}($\z_0$, $\rho$, $T$)}\label{Algorithm:Inexact-Newton}
\State \textbf{Input:} initial point $\z_0$, Lipschitz parameter $\rho$ and iteration number $T \geq 1$. 
\State \textbf{Initialization:} set $\hat{\z}_0 = \z_0$ as well as $\kappa_J > 0$, $0 < \kappa_m < \min\{1, \frac{\rho}{4}\}$ and $0 \leq \tau_0 < \frac{\rho}{4}$. 
\For{$k = 0, 1, 2, \ldots, T-1$} 
\State \textbf{STEP 1:} If $\z_k$ is a global saddle point of the problem in Eq.~\eqref{prob:main}, then \textbf{stop}. 
\State \textbf{STEP 2:} Compute an \textit{inexact} solution $\Delta\z_k$ of the subproblem 
\begin{equation}\label{subproblem:inexact-solution}
F(\hat{\z}_k) + J(\hat{\z}_k)\Delta\z_k + 6\rho\|\Delta\z_k\|\Delta\z_k = \textbf{0}. 
\end{equation}
such that Condition~\ref{condition:IHR} and~\ref{condition:SIS} hold true with a proper choice of $\tau_k \geq 0$. 
\State \textbf{STEP 3:} Compute $\lambda_{k+1} > 0$ such that $\frac{1}{30} \leq \lambda_{k+1} \rho\|\Delta\z_k\| \leq \frac{1}{14}$. 
\State \textbf{STEP 4:} Compute $\z_{k+1} = \hat{\z}_k + \Delta\z_k$. 
\State \textbf{STEP 5:} Compute $\hat{\z}_{k+1} = \hat{\z}_k - \lambda_{k+1}F(\z_{k+1})$. 
\EndFor
\State \textbf{Output:} $\bar{\z}_T = \frac{1}{\sum_{k=1}^T \lambda_k}\left(\sum_{k=1}^T \lambda_k\z_k\right)$. 
\end{algorithmic}
\end{algorithm}

\subsection{Algorithmic scheme}
We summarize our inexact second-order method, which we call \textsf{Inexact-Newton-MinMax}($\z_0$, $\rho$, $T$), in Algorithm~\ref{Algorithm:Inexact-Newton} where $\z_0$ is an initial point, $\rho > 0$ is a Lipschitz constant for the Hessian of the function $f$ and $T \geq 1$ is an iteration number. 

Our method combines Algorithm~\ref{Algorithm:Newton} with an inexact second-order framework~\citep{Xu-2020-Newton} in the context of min-max optimization. Indeed, the key difference is that Eq.~\eqref{subproblem:inexact-solution} used the inexact Jacobian $J(\hat{\z}_k)$, which can be formed and evaluated efficiently in practice. Throughout this section, we impose the following two conditions on the inexact Jacobian construction and the inexact subproblem solving. 
\begin{condition}[Inexact Jacobian regularity]\label{condition:IHR}
For some $\kappa_J > 0$ and $\tau_k \geq 0$, the inexact Jacobian $J(\hat{\z}_k)$ satisfies the following regularity conditions: $\|(J(\hat{\z}_k) - DF(\hat{\z}_k))\Delta\z_k\| \leq \tau_k\|\Delta\z_k\|$ and $\|J(\hat{\z}_k)\| \leq \kappa_J$, where $\{\hat{\z}_k\}_{k \geq 0}$ and $\{\Delta\z_k\}_{k \geq 0}$ are generated by Algorithm~\ref{Algorithm:Inexact-Newton}. 
\end{condition}
\begin{condition}[Sufficient inexact solving]\label{condition:SIS}
Fixing $\kappa_m \in (0, 1)$, we solve the nonlinear equation in Eq.~\eqref{subproblem:inexact-solution} inexactly to find $\Delta\z_k$ such that $\|F(\hat{\z}_k) + J(\hat{\z}_k)\Delta\z_k + 6\rho\|\Delta\z_k\|\Delta\z_k\| \leq \kappa_m \min\{\|\Delta\z_k\|^2, \|F(\hat{\z}_k)\|\}$. In addition, $\{\hat{\z}_k\}_{k \geq 0}$ and $\{\Delta\z_k\}_{k \geq 0}$ are generated by Algorithm~\ref{Algorithm:Inexact-Newton}. 
\end{condition}
Under Conditions~\ref{condition:IHR} and~\ref{condition:SIS}, our proposed algorithm (cf.\ Algorithm~\ref{Algorithm:Inexact-Newton}) achieves the same worst-case iteration complexity of $O(\epsilon^{-2/3})$ for computing an $\epsilon$-global saddle point of the problem in Eq.~\eqref{prob:main} as that of the exact variant (cf. Algorithm~\ref{Algorithm:Newton}). 

Notably, Condition~\ref{condition:IHR} allows for the principled use of many practical techniques to constructing the inexact Jacobian $J(\hat{\z}_k)$ in the context of min-max optimization. One such scheme can be described for solving the \textit{finite-sum} min-max optimization problems in the form of 
\begin{equation}\label{prob:finite-sum-I}
\min_{\x \in \br^m} \max_{\y \in \br^n} \ f(\x, \y) \triangleq \tfrac{1}{N}\sum_{i=1}^N f_i(\x, \y), 
\end{equation}
and its special instantiation 
\begin{equation}\label{prob:finite-sum-II}
\min_{\x \in \br^m} \max_{\y \in \br^n} \ f(\x, \y) \triangleq \tfrac{1}{N}\sum_{i=1}^N f_i(\sa_i^\top \x, \bb_i^\top \y), 
\end{equation} 
where $N \gg 1$, each of $f_i$ is a convex-concave function with bounded and $\rho$-Lipschitz Hessian, and $\{(\sa_i, \bb_i)\}_{i=1}^N \subseteq \br^m \times \br^n$ are a collection of data samples. 

This type of finite-sum problem is standard in optimization and machine learning~\citep{Shalev-2014-Understanding, Roosta-2014-Stochastic, Roosta-2019-Sub}. Owing to the finite-sum structure, both uniform and nonuniform subsampling can be implemented to satisfy Condition~\ref{condition:IHR} with high probability. Under standard sampling conditions, we obtain the stronger bound $\|J(\hat{\z}_k)-DF(\hat{\z}_k)\| \leq \tau_k$, which directly implies the corresponding requirement in Condition~\ref{condition:IHR}. As a consequence, by Theorem~\ref{Thm:main-inexact}, the resulting subsampled Newton method preserves the same iteration complexity of $O(\epsilon^{-2/3})$ for solving finite-sum convex-concave min-max problems; see Theorem~\ref{Thm:main-finitesum} for details.

While Condition~\ref{condition:IHR} focuses on how to construct and control the inexact Jacobian $J(\hat{\z}_k)$ (see Section~\ref{subsec:inexact_Jacobian}), Section~\ref{subsec:inexact_subproblem} aims at solving the inner subproblem for a fixed $J(\hat{\z}_k)$. In particular, we propose a safeguarded Newton-bisection scheme that returns an update satisfying Condition~\ref{condition:SIS} with a single Schur decomposition and only $O(\log\log(1/\epsilon))$ solves of quasi-upper-triangular linear systems. This modular design has separated Jacobian approximation from subproblem accuracy, enabling the overall complexity analysis by combining the guarantees for Conditions~\ref{condition:IHR} and~\ref{condition:SIS}.

We provide the iteration complexity of Algorithm~\ref{Algorithm:Inexact-Newton} as follows, 
\begin{theorem}\label{Thm:main-inexact}
Suppose that Assumption~\ref{Assumption:main} and~\ref{Assumption:main-smooth} hold and 
\begin{equation*}
0 \leq \tau_k \leq \min\{\tau_0, \tfrac{\rho(1 - \kappa_m)}{4(\kappa_J + 6\rho)}\|F(\hat{\z}_k)\|\}, \textnormal{ for all } k \geq 0. 
\end{equation*}
Then, the iterates generated by Algorithm~\ref{Algorithm:Inexact-Newton} are bounded and, in addition,
\begin{equation*}
\mathrm{gap}(\bar{\z}_T, \beta) \leq \tfrac{2112\sqrt{3}\rho\|\z_0 - \z^\star\|^3}{T^{3/2}}, 
\end{equation*}
where $\z^\star = (\x^\star, \y^\star)$ is a global saddle point, $\rho > 0$ is defined in Assumption~\ref{Assumption:main-smooth}, and $\beta = 7\|\z_0 - \z^\star\|$. As such, Algorithm~\ref{Algorithm:Inexact-Newton} achieves an $\epsilon$-global saddle point solution within $O(\epsilon^{-2/3})$ iterations.
\end{theorem}
\begin{remark}
Theorem~\ref{Thm:main-inexact} demonstrates that Algorithm~\ref{Algorithm:Inexact-Newton} achieves the same iteration complexity as Algorithm~\ref{Algorithm:Newton} regardless of inexact second-order information and inexact subproblem solving under Conditions~\ref{condition:IHR} and~\ref{condition:SIS}. Unfortunately, we are unable to claim optimality of Algorithm~\ref{Algorithm:Inexact-Newton} since existing lower bounds~\citep{Adil-2022-Optimal, Lin-2025-Perseus} have been established under the assumption of an exact second-order oracle and exact subproblem solutions. When moving to inexact methods, the oracle needs to be redefined. We want to emphasize that there may not be a universal way to define the oracle in this context, particularly when considering matrix operations. For instance, quasi-Newton and subsampled Newton methods are both categorized as inexact Newton methods, yet they differ in how second-order information is approximated, which impacts how their oracles should be defined and, consequently, how lower bounds are characterized. Recent work has begun to address this issue. Specifically,~\citet{Agafonov-2024-Advancing} proposed an accelerated stochastic second-order method that is robust to both gradient and Hessian inexactness, and established a lower bound under a $\delta$-inexact Hessian condition: $\|(H(\x) - \nabla^2 f(\x))(\x'-\x)\| \leq \delta\|\x'-\x\|$ for all $\x,\x'$. Proceeding a further step,~\citet{Agafonov-2024-Exploring} extended the above results to the VI setting by considering a $\delta$-inexact Jacobian condition: $\|(J(\z) - DF(\z))(\z'-\z)\| \leq \delta\|\z'-\z\|$ for all $\z, \z'$. They proved a lower bound and also provided a thorough comparison with our method (see pages 3-4). Notably, their lower bound is not valid for our inexact method since we bound the Jacobian inexactness with $\tau_k > 0$ that decreases as $k \rightarrow +\infty$ and bounds the norm of Jacobian from above.
\end{remark}
\begin{remark}
All of our algorithms require an upper bound $\rho$ on the Hessian Lipschitz constant, which enters only through the explicit safeguarded second-order update. In practice, one can obtain a conservative bound from problem structure when available (e.g., bounded features and bounded iterates in finite-sum problems), and we observe that the method is typically robust to moderate over-estimation of $\rho$ because the step selection is safeguarded. When such a bound is unavailable, a standard remedy is a backtracking scheme: start from an initial guess $\rho_0$ and increase it geometrically until the local model inequality used by the safeguard is satisfied, after which the iteration proceeds. We leave the development of a fully parameter-free variant with the same worst-case guarantees (i.e., jointly controlling the safeguard and the inexactness schedule) to future work, but emphasize that estimating smoothness parameters via backtracking is standard in large-scale optimization and does not significantly change the implementation of our scheme.
\end{remark}

\subsection{Inexact subproblem solving and complexity analysis}\label{subsec:inexact_subproblem}
We clarify how to obtain an inexact solution of the nonlinear equation problem in
Eq.~\eqref{subproblem:inexact-solution} such that Condition~\ref{condition:SIS} holds. Indeed, we present a new safeguarded Newton-bisection scheme that solves each subproblem using a single Schur decomposition and $O(\log\log(1/\epsilon))$ calls to a linear system solver in a quasi-upper-triangular system. This yields a complexity of $O((m+n)^\omega\epsilon^{-2/3} + (m+n)^2\epsilon^{-2/3}(\log\log(1/\epsilon)))$, which improves the practical performance over bisection and damped Newton while retaining a global convergence guarantee. In the case where $J(\hat{\z})$ is antisymmetric, we show that the Newton method is sufficient and requires $O(\log\log(1/\epsilon))$ calls to a linear system solver.

For simplicity, we rewrite the nonlinear equation problem in
Eq.~\eqref{subproblem:inexact-solution} as
\begin{equation*}
F(\hat{\z}) + J(\hat{\z})\Delta\z + 6\rho\|\Delta\z\|\Delta\z = \mathbf{0}.
\end{equation*}
This is equivalent to finding a pair $(\Delta\z, \lambda)$ for which
\begin{equation}\label{def:pair}
(J(\hat{\z}) + \lambda I)\Delta\z = -F(\hat{\z}), \quad \lambda = 6\rho\|\Delta\z\|.
\end{equation}
Although $J(\hat{\z})$ is not symmetric, it has a Schur decomposition $J(\hat{\z}) = QUQ^{-1}$ where $U$ is quasi-upper-triangular (since all entries of $J(\hat{\z})$ are real) in the sense that it is a block diagonal matrix with size $2 \times 2$ and $Q$ is a unitary matrix. This implies $J(\hat{\z}) + \lambda I = Q(U + \lambda I)Q^{-1}$. In the convex-concave setting, $J(\hat{\z})$ is monotone such that $v^\top J(\hat{\z})v \geq 0$ for all $v$, and $J(\hat{\z}) + \lambda I$ is invertible for all $\lambda>0$.

In this regard, we define
\begin{equation}\label{def:delta_z}
\Delta\z(\lambda) \triangleq - (J(\hat{\z}) + \lambda I)^{-1}F(\hat{\z}) = -Q(U + \lambda I)^{-1}Q^{-1}F(\hat{\z}),
\end{equation}
and obtain from Eq.~\eqref{def:pair} and the above definition that the solution $(\Delta\z, \lambda)$ we are looking for depends upon the nonlinear equality $\lambda = 6\rho\|\Delta\z(\lambda)\|$. For convenience, we define $\psi(\lambda) = \|\Delta\z(\lambda)\|^2$ and examine $\psi(\lambda)$ in the
following proposition.

\begin{proposition}\label{Prop:psi}
The first-order and second-order derivatives of $\psi$ are given by $\psi'(\lambda) = - 2\Delta\z(\lambda)^\top (J(\hat{\z}) + \lambda I)^{-1}\Delta\z(\lambda)$, and $\psi''(\lambda) = 2\|(J(\hat{\z}) + \lambda I)^{-1}\Delta\z(\lambda)\|^2 + 4\Delta\z(\lambda)^\top(J(\hat{\z}) + \lambda I)^{-2}\Delta\z(\lambda)$. If $F(\hat{\z}) \neq \mathbf{0}$ and $v^\top J(\hat{\z})v \geq 0$ for all $v$, then $\psi(\lambda)$ is strictly decreasing on $(0,\infty)$.
\end{proposition}

The nonlinear equality $\lambda = 6\rho\|\Delta\z(\lambda)\|$ is equivalent to $\lambda = 6\rho\sqrt{\psi(\lambda)}$,
which can be reformulated as the following one-dimensional nonlinear equation problem:
\begin{equation}\label{def:one_dimension_subproblem}
\phi(\lambda) \triangleq \sqrt{\psi(\lambda)} - \tfrac{\lambda}{6\rho} = 0.
\end{equation}
In the following proposition, we examine $\phi(\lambda)$ and prove several key properties.
\begin{proposition}\label{Prop:phi}
Suppose that $F(\hat{\z}) \neq \mathbf{0}$ and $v^\top J(\hat{\z})v \geq 0$ for all $v$. Then, $\phi(\lambda)$ is strictly decreasing on $(0,\infty)$. Its first-order derivative is
\begin{equation*}
\phi'(\lambda) = -\tfrac{\Delta\z(\lambda)^\top (J(\hat{\z}) + \lambda I)^{-1}\Delta\z(\lambda)}{\|\Delta\z(\lambda)\|} - \tfrac{1}{6\rho} < 0,\quad \textnormal{for all } \lambda>0.
\end{equation*}
Consequently, Eq.~\eqref{def:one_dimension_subproblem} admits a unique solution $\lambda^\star>0$.
\end{proposition}

The required solution is the unique positive root to Eq.~\eqref{def:one_dimension_subproblem}.
Let $\lambda^0 > 0$ be given with $\phi(\lambda^0) > 0$, we first perform one Schur decomposition: $J(\hat{\z}) = QUQ^{-1}$. Then, a typical iteration of the Newton method for finding such root replaces the current iterate $\lambda^j > 0$ with the
candidate $\tilde{\lambda}^{j+1}$ for which
\begin{equation}\label{subroutine:Newton_update}
\tilde{\lambda}^{j+1} = \lambda^j - \tfrac{\phi(\lambda^j)}{\phi'(\lambda^j)}, 
\end{equation}
where $\phi(\lambda^j)$ can be obtained by solving $\Delta\z(\lambda^j) = -Q(U + \lambda^j I)^{-1}Q^{-1}F(\hat{\z})$, and $\phi'(\lambda^j)$ can be obtained using Proposition~\ref{Prop:phi} once $\Delta\z(\lambda^j)$ is available. Note that $\phi(\lambda^j)$ and $\phi'(\lambda^j)$ can be computed using two calls to a linear system solver in a quasi-upper-triangular system.

In terms of convergence, the Newton method is not globally convergent for the general asymmetric case. We can adopt a safeguarded Newton-bisection scheme: given an interval $[L^j,U^j]$ satisfying $\phi(L^j) \geq 0 \geq \phi(U^j)$ and a current iterate $\lambda^j \in [L^j,U^j]$, we compute $\tilde{\lambda}^{j+1}$ using \eqref{subroutine:Newton_update}. If $\tilde{\lambda}^{j+1}\in(L^j,U^j)$, we accept it and set $\lambda^{j+1}=\tilde{\lambda}^{j+1}$; otherwise we take a bisection step and set $\lambda^{j+1}=(L^j+U^j)/2$. We then update the bracket using the sign of $\phi(\lambda^{j+1})$.
\begin{theorem}\label{Thm:main-inexact-complexity}
Suppose that $F(\hat{\z})\neq \mathbf{0}$ and $v^\top J(\hat{\z})v \geq 0$ for all $v$. Let $\lambda^\star > 0$ be the unique solution to Eq.~\eqref{def:one_dimension_subproblem}. Suppose that the safeguarded Newton-bisection method is initialized with $[L^0,U^0]$ such that $\phi(L^0) \geq 0 \geq \phi(U^0)$. Then, $\lambda^j \to \lambda^\star$ globally. There also exists a constant $r>0$ such that: (i) after at most $O(\log((U^0-L^0)/r))$ iterations, the iterates enter the local Newton region
$|\lambda-\lambda^\star|\le r$, and (ii) once inside this region, all Newton steps are accepted and the convergence is $Q$-quadratic, requiring $O(\log\log(1/\epsilon))$ additional iterations to achieve $|\lambda^j-\lambda^\star| \leq \epsilon$. As a consequence, the total number of iterations is $O(\log(1/r)) + O(\log\log(1/\epsilon))$ where $r>0$ is independent of $\epsilon$.
\end{theorem}
We see from Theorem~\ref{Thm:main-inexact-complexity} that the total number of iterations to achieve $|\phi(\lambda^j)| \leq \epsilon'$ is also $O(\log\log(1/\epsilon'))$. This scalar reformulation is not merely a computational convenience: it provides a verifiable mechanism to enforce the inexact-solve requirement in Condition~\ref{condition:SIS}. Indeed, we rewrite $\phi(\lambda)$ as
\begin{equation*}
\phi(\lambda) = \|\Delta \z(\lambda)\| - \tfrac{\lambda}{6\rho}.
\end{equation*}
Then, we have 
\begin{eqnarray*}
\lefteqn{\|F(\hat{\z})+J(\hat{\z})\Delta\z(\lambda)+6\rho\|\Delta\z(\lambda)\|\Delta\z(\lambda)\|} \\ 
& = & \|(F(\hat{\z})+(J(\hat{\z})+\lambda I)\Delta\z(\lambda))+(6\rho\|\Delta\z(\lambda)\|-\lambda)\Delta\z(\lambda)\| \\
& = & |6\rho\|\Delta\z(\lambda)\|-\lambda|\|\Delta\z(\lambda)\| \\ 
& = & 6\rho|\phi(\lambda)|\|\Delta\z(\lambda)\|. 
\end{eqnarray*}
In addition, we rewrite Condition~\ref{condition:SIS} as 
\begin{equation*}
\|F(\hat{\z})+J(\hat{\z})\Delta\z(\lambda)+6\rho\|\Delta\z(\lambda)\|\Delta\z(\lambda)\| \leq \kappa_m \min\{\|\Delta\z(\lambda)\|^2, \|F(\hat{\z})\|\}. 
\end{equation*} 
Thus, it suffices to guarantee
\begin{equation*}
|\phi(\lambda)| \leq \tfrac{\kappa_m}{6\rho}\min\left\{\|\Delta\z(\lambda)\|, \tfrac{\|F(\hat{\z})\|}{\|\Delta \z(\lambda)\|}\right\}, 
\end{equation*}
which implies driving $|\phi(\lambda)|$ below the explicit tolerance threshold immediately yields the residual bound required by Condition~\ref{condition:SIS}. Consequently, the convergence $\lambda^j\to\lambda^\star$ delivered by our safeguarded Newton-bisection procedure implies that $\Delta \z(\lambda^j)$ satisfies Condition~\ref{condition:SIS} after finitely many inner iterations, with no additional assumptions. Note that Condition~\ref{condition:IHR} is enforced by the Jacobian construction (see Section~\ref{subsec:inexact_Jacobian}) and is independent of $\{\lambda^j\}$; indeed, the role of $\{\lambda^j\}$ is to verify Condition~\ref{condition:SIS} via the bound on $|\phi(\lambda)|$.

We consider the special case where $J(\hat{\z})^\top = -J(\hat{\z})$. By exploiting such special structure, we have $S \triangleq J(\hat{\z})^\top J(\hat{\z}) \succeq 0$ and $\psi(\lambda) = \|\Delta\z(\lambda)\|^2 = F(\hat{\z})^\top (\lambda^2 I + S)^{-1}F(\hat{\z})$. We let $\eta=\lambda^2$ and define
\begin{equation*}
\Psi(\eta) \triangleq F(\hat{\z})^\top(\eta I+S)^{-1}F(\hat{\z}), \quad \Phi(\eta) \triangleq \Psi(\eta)-\tfrac{\eta}{36\rho^2}.
\end{equation*}
Then, solving $\phi(\lambda)=0$ is equivalent to solving $\Phi(\eta)=0$ with $\lambda^\star=\sqrt{\eta^\star}$.

\begin{proposition}\label{Prop:antisym}
Suppose that $F(\hat{\z})\neq \mathbf{0}$ and $J(\hat{\z})^\top = -J(\hat{\z})$. Then, $\Phi(\eta)$ is strictly decreasing and convex on $(0,\infty)$. Thus, $\Phi(\eta)=0$ admits a unique solution $\eta^\star>0$.
\end{proposition}

\begin{theorem}\label{Thm:antisym_newton}
Suppose that $F(\hat{\z})\neq \mathbf{0}$ and $J(\hat{\z})^\top = -J(\hat{\z})$. Let $\{\eta^j\}_{j\ge 0}$ be generated by the Newton iteration in the following form of 
\begin{equation*}
\eta^{j+1} = \eta^j-\tfrac{\Phi(\eta^j)}{\Phi'(\eta^j)}, \quad \Phi(\eta^0)\ge 0.
\end{equation*}
Then, $\eta^j$ increases monotonically and converges to the unique solution $\eta^\star$. Moreover, the rate is globally $Q$-linear with a factor at least $1-\Phi'(\lambda^\star)/\Phi'(\lambda^0)$ and is ultimately $Q$-quadratic. Equivalently, we have $\lambda^j=\sqrt{\eta^j}$ converges to $\lambda^\star=\sqrt{\eta^\star}$, and the number of iterations to achieve $|\lambda^j-\lambda^\star|\le \epsilon$ is $O(\log\log(1/\epsilon))$.
\end{theorem}
Our reformulation closely follows~\citet{Adil-2022-Optimal}, who proposed a bisection-based solver and established the complexity bound of $O((m+n)^\omega \epsilon^{-2/3} + (m+n)^2 \epsilon^{-2/3}\log(1/\epsilon))$, where $\omega \approx 2.3728$ is the matrix-multiplication constant~\citep{Demmel-2007-Fast}. Our scheme improves the worst-case dependence on $\epsilon$ by leveraging a faster quadratic-convergence phase. Computationally, we perform a single Schur decomposition of an $(m+n) \times (m+n)$ matrix, which yields the $(m+n)^\omega$ arithmetic term and requires $O((m+n)^2)$ memory to store the factors. Consequently, the method is most suitable for moderate dimensions or structured instances (e.g., sparse, block-structured, or low-rank), while large-scale variants based on iterative Krylov solving or approximate factorization are natural but require additional analysis beyond the scope of this paper. Finally, once the decomposition is available, the remaining inner work depends only polylogarithmically on the target accuracy, leading to the $O(\log\log(1/\epsilon))$ dependence in the inexact subproblem solving.

Our analysis is related to~\citet[Theorem~6.3]{Cartis-2011-Adaptive}. The key difference lies in how asymmetry affects curvature properties of one-dimensional reformulations. In general asymmetric settings, neither concavity nor convexity of $\phi(\lambda)$ is guaranteed, and thus the Newton method requires safeguarding for global convergence. In contrast, under the antisymmetry condition, $\eta=\lambda^2$ restores convexity of the scalar equation and yields a globally convergent unmodified Newton scheme.

\subsection{Finite-sum min-max optimization}\label{subsec:inexact_Jacobian}
We give concrete examples to clarify the ways to construct the inexact Jacobian such that Condition~\ref{condition:IHR} holds true. The key ingredient is random sampling which can significantly reduce the computational cost in a minimization setting~\cite{Xu-2020-Newton} and we show that such technique can be employed for solving finite-sum min-max optimization problems in Eq.~\eqref{prob:finite-sum-I} and~\eqref{prob:finite-sum-II}. 

We let the probability distribution of sampling $\xi \in \{1,2,\ldots,N\}$ be defined as $\Prob(\xi = i) = p_i \geq 0$ for $i=1,2,\ldots,N$ and $\SCal \subseteq \{1,2,\ldots,N\}$ denote a collection of sampled indices ($|\SCal|$ is its cardinality). Then, we can construct the inexact Jacobian as follows, 
\begin{equation}\label{construction:SSH}
J(\z) = \tfrac{1}{N|\SCal|}\sum_{i \in \SCal} \tfrac{1}{p_i}DF_i(\z), \quad \textnormal{for } DF_i(\z) = \begin{bmatrix} \hessx f_i(\x, \y) & \hessxy f_i(\x, \y) \\ -\hessxy f_i(\x, \y) & -\hessy f_i(\x, \y) \end{bmatrix}. 
\end{equation}
This construction is referred to as the subsampled Jacobian and can offer significant computational savings if $|\SCal| \ll N$ in big-data regime when $N \gg 1$. 

In the general finite-sum setting with Eq.~\eqref{prob:finite-sum-I}, we suppose 
\begin{equation}\label{condition:uniform-jac-bound}
\sup_{\z \in \br^{m+n}} \|DF_i(\z)\| \leq B_i, \quad \textnormal{for all } i \in \{1,2,\ldots,N\}, 
\end{equation}
and let $B_{\max} = \max_{1 \leq i \leq N} B_i$. The sample complexity results for the uniform sampling (i.e., $p_i = \frac{1}{N}$) are a consequence of~\citet[Lemma~16]{Xu-2020-Newton} and we omit the proof for the brevity.
\begin{lemma}\label{Lemma:uniform}
Suppose that Eq.~\eqref{condition:uniform-jac-bound} holds true and let $B_{\max}$ and $0 < \tau, \delta < 1$ be defined properly. A uniform sampling with or without replacement is performed to form the subsampled Jacobian; indeed, $J(\z)$ is constructed using Eq.~\eqref{construction:SSH} with $p_i = \frac{1}{n}$ and the sample size satisfies
\begin{equation*}
|\SCal| \geq \Theta^U(\tau,\delta) := \tfrac{16B_{\max}^2}{\tau^2} \log\left(\tfrac{2(m+n)}{\delta}\right). 
\end{equation*}
Then, we have $\Prob(\|J(\z) - DF(\z)\| \leq \tau) \geq 1 - \delta$. 
\end{lemma}
\begin{remark}
Lemma~\ref{Lemma:uniform} shows that the inexact Jacobian satisfies Condition~\ref{condition:IHR} with probability $1 - \delta$ under certain choice of $\tau$ and $\kappa_J = B_{\max}$ if it is constructed using the uniform sampling and the size $|\SCal| = \Omega(\frac{B_{\max}^2}{\tau^2} \log(\tfrac{m+n}{\delta}))$. Indeed, the first inequality holds true with probability $1 - \delta$ since $\Prob(\|J(\z) - DF(\z)\| \leq \tau) \geq 1 - \delta$, and the second inequality holds true since $\kappa_H = B_{\max}$ (this is deterministic).
\end{remark}

In the finite-sum setting with Eq.~\eqref{prob:finite-sum-II}, we construct a more ``informative" distribution of sampling $\xi \in \{1,2,\ldots,N\}$ as opposed to simplest uniform sampling. It is advantageous to bias the probability distribution towards carefully picking indices corresponding to those \textit{relevant} $f_i$'s in forming the Jacobian. However, constructing inexact Hessian and corresponding sample complexity guarantee from~\citet[Section~3.1]{Xu-2020-Newton} in a minimization setting requires $\hess f_i$ to be rank-one, which is not valid here. To address this, we avail ourselves of the operator-Bernstein inequality~\citep{Gross-2010-Note}. 

We write $DF(\z) = \tfrac{1}{N}\sum_{i=1}^N \Lambda_i DF_i(\sa_i^\top \x, \bb_i^\top \y)\Lambda_i^\top$ where $\Lambda_i = \begin{bmatrix} \sa_i & \\ & \bb_i \end{bmatrix} \in \br^{(m+n) \times 2}$ and $DF_i(x, y) \in \br^{2 \times 2}$. Then, the resulting compact form is $DF(\z) = \Lambda^\top\Sigma\Lambda$ where 
\begin{equation}\label{def:nonuniform-construct}
\Lambda^\top = \begin{bmatrix}
\mid & \cdots & \mid \\ \Lambda_1 & \cdots & \Lambda_N \\ \mid & \cdots & \mid
\end{bmatrix} \textnormal{ and } 
\Sigma = \tfrac{1}{N}\begin{bmatrix}
DF_1(\sa_i^\top \x, \bb_i^\top \y) & \cdots & \cdots \\
\vdots & \ddots & \vdots \\
\cdots & \cdots & DF_N(\sa_i^\top \x, \bb_i^\top \y)
\end{bmatrix}. 
\end{equation}
We assume that $B_\textnormal{avg} = \frac{1}{N}\sum_{i=1}^N B_i$ and
\begin{equation}\label{condition:nonuniform-jac-bound}
\sup_{(\x, \y)} \|DF_i(\sa_i^\top \x, \bb_i^\top \y)\|(\|\sa_i\|^2 + \|\bb_i\|^2) \leq B_i, \quad \textnormal{for all } i \in \{1,2,\ldots,N\}. 
\end{equation}
For the uniform sampling with the particular nonuniform distribution given by 
\begin{equation}\label{condition:nonuniform-dist}
p_i = \tfrac{\|DF_i(\sa_i^\top \x, \bb_i^\top \y)\|(\|\sa_i\|^2 + \|\bb_i\|^2)}{\sum_{i=1}^N \|DF_i(\sa_i^\top \x, \bb_i^\top \y)\|(\|\sa_i\|^2 + \|\bb_i\|^2)}, 
\end{equation}
the following lemma summarizes the results on the sample complexity.
\begin{lemma}\label{Lemma:nonuniform}
Suppose that Eq.~\eqref{condition:nonuniform-jac-bound} holds and let $B_\textnormal{avg}$ and $0 < \tau, \delta < 1$ be defined properly. Then, the nonuniform sampling is performed to form the subsampled Jacobian; indeed, $J(\z)$ is constructed using Eq.~\eqref{construction:SSH} with $p_i > 0$ in Eq.~\eqref{condition:nonuniform-dist} and the sample size satisfies
\begin{equation*}
|\SCal| \geq \Theta^N(\tau,\delta) := \tfrac{4B_\textnormal{avg}^2}{\tau^2}\log\left(\tfrac{2(m+n)}{\delta}\right). 
\end{equation*}
Then, we have $\Prob(\|J(\z)- DF(\z)\| \leq \tau) \geq 1 - \delta$. 
\end{lemma}
\begin{remark}
Compared to Lemma~\ref{Lemma:uniform}, the computation of sampling probability in Lemma~\ref{Lemma:nonuniform} requires going through the whole dataset and the cost is $O((m+n)N)$. Nonetheless, the computational savings with smaller sample size dominates such extra cost of computing the sampling probability in a minimization setting~\citep{Xu-2016-Subsampled}. In particular, the sample size from Lemma~\ref{Lemma:nonuniform} is smaller as $B_\textnormal{avg} \ll B_{\max}$ which occurs if one $B_i$ is much larger than the others. In addition, the sample size is proportional to the log of the failure probability in Lemma~\ref{Lemma:uniform} and~\ref{Lemma:nonuniform}, allowing the use of a very small failure probability without increasing the sample size significantly. 
\end{remark}
\begin{algorithm}[!t] \small
\begin{algorithmic}\caption{\textsf{Subsampled-Newton-MinMax}($\z_0$, $\rho$, $T$, $\delta$)}\label{Algorithm:SS-Newton}
\State \textbf{Input:} initial point $\z_0$, Lipschitz parameter $\rho$, iteration number $T \geq 1$ and failure probability $\delta \in (0, 1)$. 
\State \textbf{Initialization:} set $\hat{\z}_0 = \z_0$ as well as $0 < \kappa_m < \min\{1, \frac{\rho}{4}\}$ and $0 < \tau_0 < \frac{\rho}{4}$. 
\For{$k = 0, 1, 2, \ldots, T-1$} 
\State \textbf{STEP 1:} If $\z_k$ is a global saddle point of the problem in Eq.~\eqref{prob:finite-sum-I} or~\eqref{prob:finite-sum-II}, then \textbf{stop}. 
\State \textbf{STEP 2:} Construct the inexact Jacobian $J(\hat{\z}_k)$ using Eq.~\eqref{construction:SSH} with the sample set of $|\SCal| \geq \Theta^U(\tau_k,1 - \sqrt[T]{1-\delta})$ (\textit{uniform}) or $|\SCal| \geq \Theta^N(\tau_k,1 - \sqrt[T]{1-\delta})$ (\textit{non-uniform}) given $0 < \tau_k \leq \min\{\tau_0, \tfrac{\rho(1 - \kappa_m)}{4(B_{\max} + 6\rho)}\|F(\hat{\z}_k)\|\}$. 
\State \textbf{STEP 2:} Compute an \textit{inexact} solution $\Delta\z_k$ of the following subproblem $F(\hat{\z}_k) + J(\hat{\z}_k)\Delta\z_k + 6\rho\|\Delta\z_k\|\Delta\z_k = \textbf{0}$ such that Condition~\ref{condition:SIS} hold true. 
\State \textbf{STEP 3:} Compute $\lambda_{k+1} > 0$ such that $\frac{1}{30} \leq \lambda_{k+1} \rho\|\Delta\z_k\| \leq \frac{1}{14}$. 
\State \textbf{STEP 4:} Compute $\z_{k+1} = \hat{\z}_k + \Delta\z_k$. 
\State \textbf{STEP 5:} Compute $\hat{\z}_{k+1} = \hat{\z}_k - \lambda_{k+1}F(\z_{k+1})$. 
\EndFor
\State \textbf{Output:} $\bar{\z}_T = \frac{1}{\sum_{k=1}^T \lambda_k}\left(\sum_{k=1}^T \lambda_k\z_k\right)$. 
\end{algorithmic}
\end{algorithm}

Combining Algorithm~\ref{Algorithm:Inexact-Newton} and these random sampling strategies gives the first class of subsampled Newton methods for solving finite-sum min-max optimization problems. We summarize the scheme in Algorithm~\ref{Algorithm:SS-Newton} and present the iteration complexity and total complexity with high-probability in the following theorem. 
\begin{theorem}\label{Thm:main-finitesum}
Suppose that Assumptions~\ref{Assumption:main} and~\ref{Assumption:main-smooth} hold. Then, the iterates generated by Algorithm~\ref{Algorithm:SS-Newton} are bounded and Algorithm~\ref{Algorithm:SS-Newton} achieves an $\epsilon$-global saddle point solution within $O(\epsilon^{-2/3})$ iterations with the probability at least $1 - \delta$. As a consequence, the high-probability complexity bound is $O((m+n)^\omega\epsilon^{-2/3} + (m+n)^2\epsilon^{-2/3}\log\log(1/\epsilon))$ where $\omega \approx 2.3728$ is the multiplication constant. 
\end{theorem}
\begin{remark}
In finite-sum optimization,~\citet{Arjevani-2017-Oracle} investigated lower bounds for second-order methods, where the complexity depends on the number of samples, the condition number, and the desired tolerance. They identified a gap between upper and lower bounds for subsampled Newton methods, indicating that simple sampling strategies are insufficient and that variance reduction or averaging could be essential for further improvement. Recent advances in stochastic second-order optimization methods with variance reduction further indicate the importance of combining structure and noise control~\citep{Zhou-2019-Stochastic, Na-2023-Hessian}. However, the lower bound on the oracle complexity of second-order methods for finite-sum minimax optimization remains an open question. This is beyond the scope of our current work but represents an important direction for future research.
\end{remark}

\section{Numerical Experiments}\label{sec:exp}
We evaluate the performance of our methods for min-max optimization problems with synthetic and real data. The baseline methods include EG and OGDA (we refer to them as EG1 and OGDA1), their stochastic variants with or without variance reduction, and second-order variants of EG and OGDA (which we refer to as EG2 and OGDA2). We exclude the methods in~\cite{Huang-2022-Cubic} since their convergence guarantees are proved under an error bound conditions. All the methods were implemented using MATLAB R2024b on a MacBook Pro with an Intel Core i9 2.4GHz and 16GB memory.

\begin{figure*}[!t]
\centering
\includegraphics[width=0.32\textwidth]{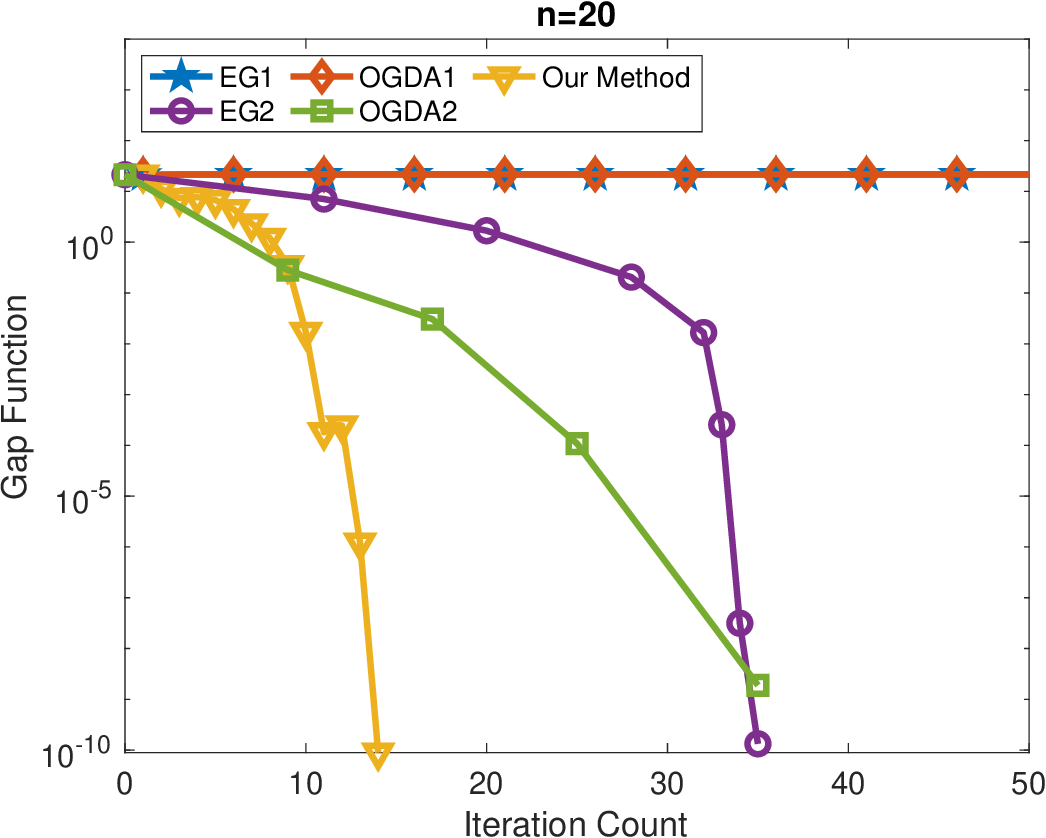}
\includegraphics[width=0.32\textwidth]{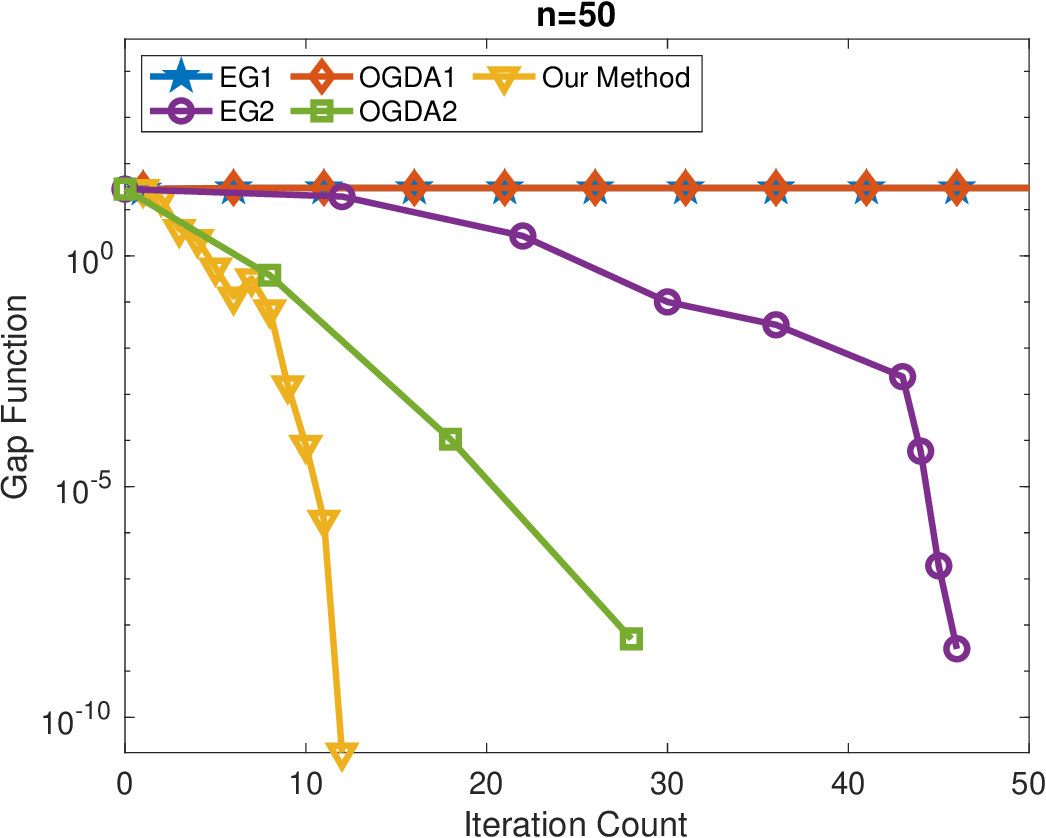}
\includegraphics[width=0.32\textwidth]{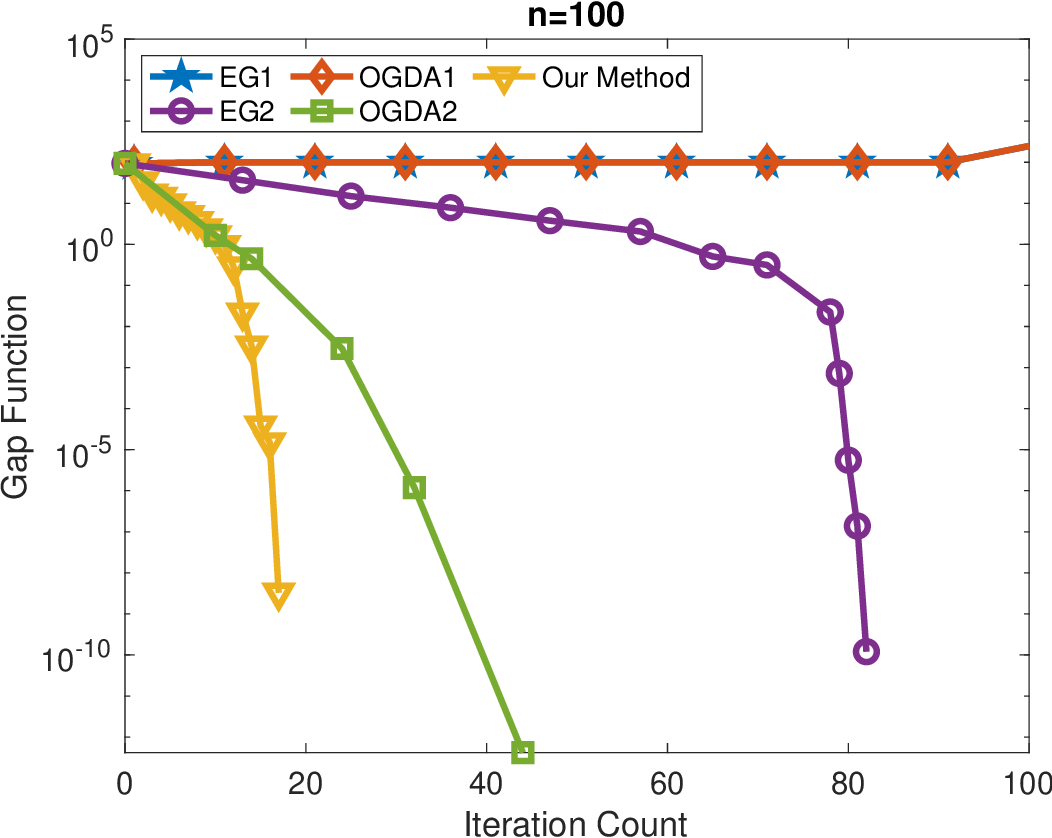} \\
\includegraphics[width=0.32\textwidth]{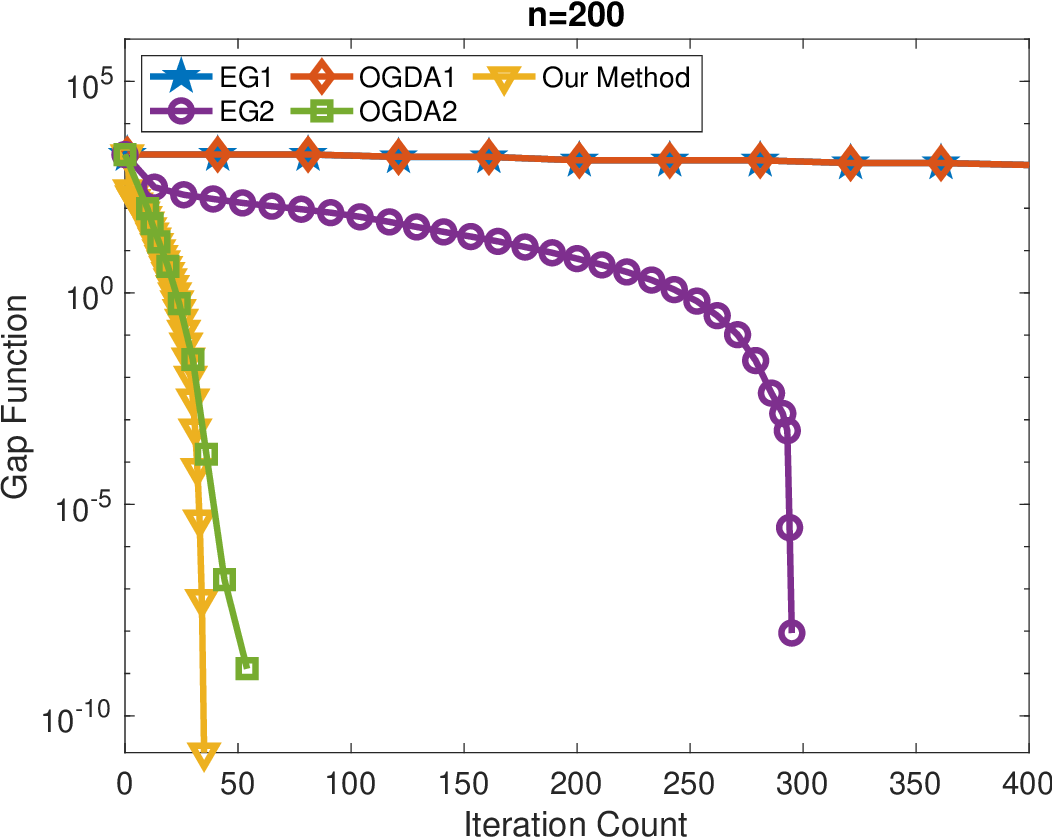}
\includegraphics[width=0.32\textwidth]{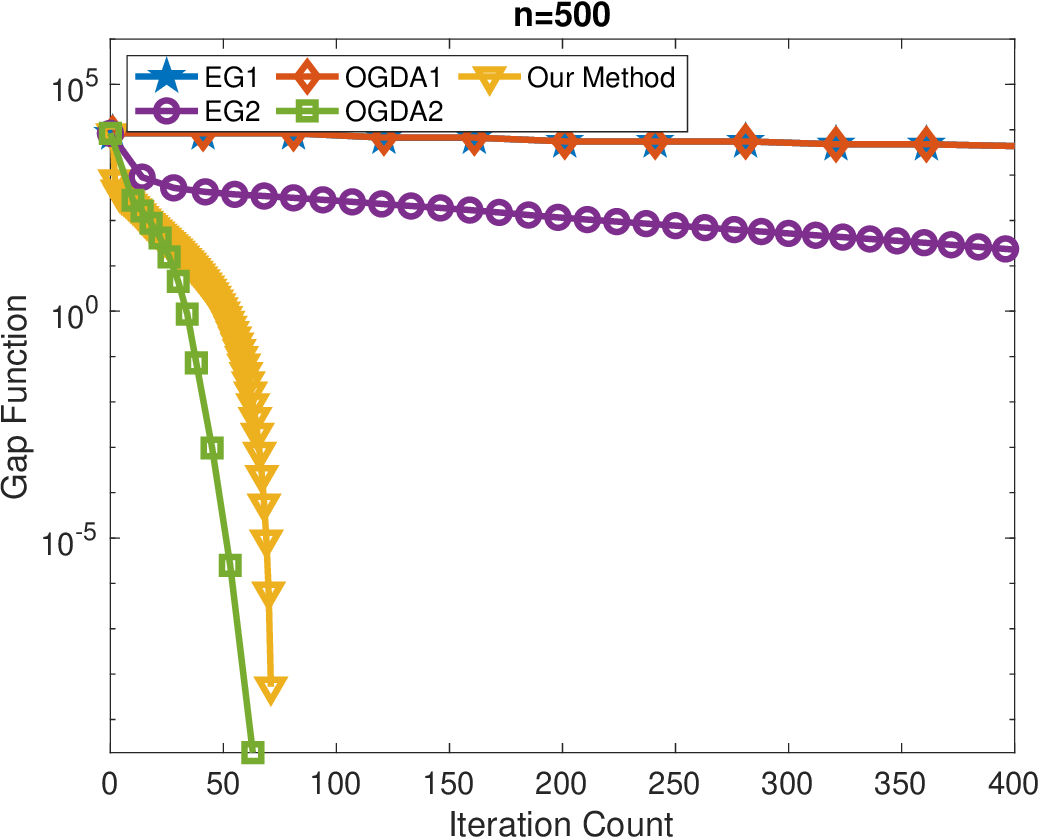}
\includegraphics[width=0.32\textwidth]{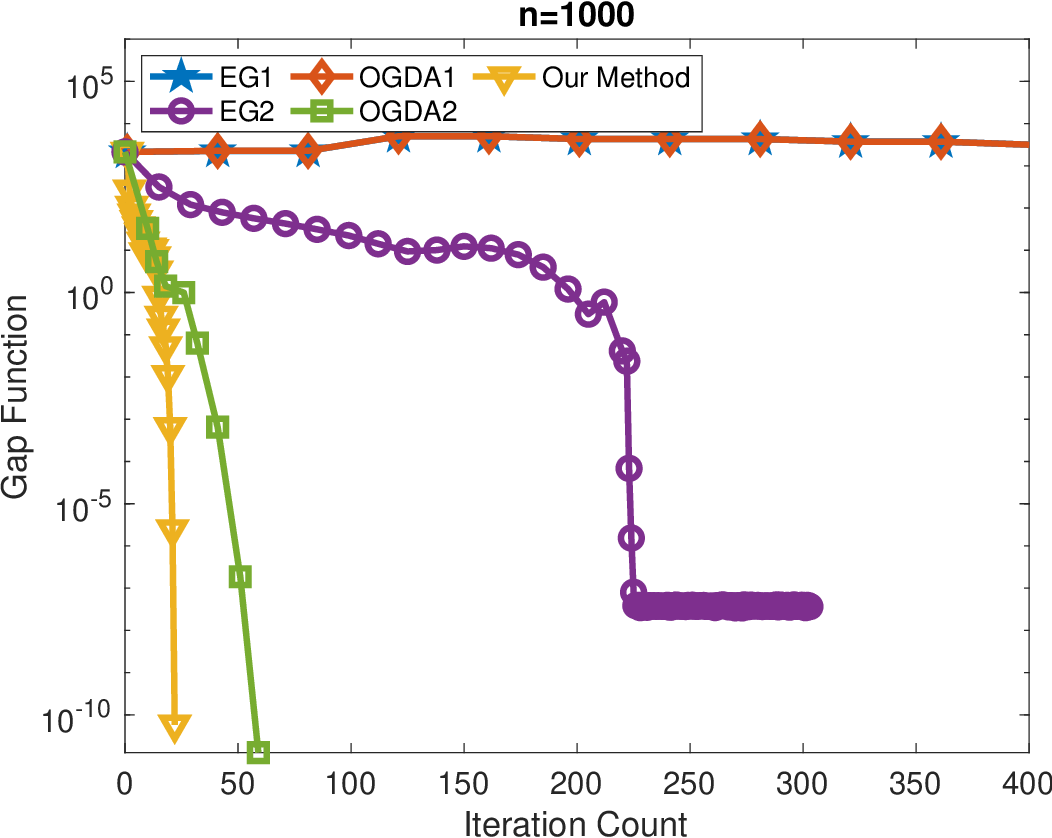} \\ 
\includegraphics[width=0.32\textwidth]{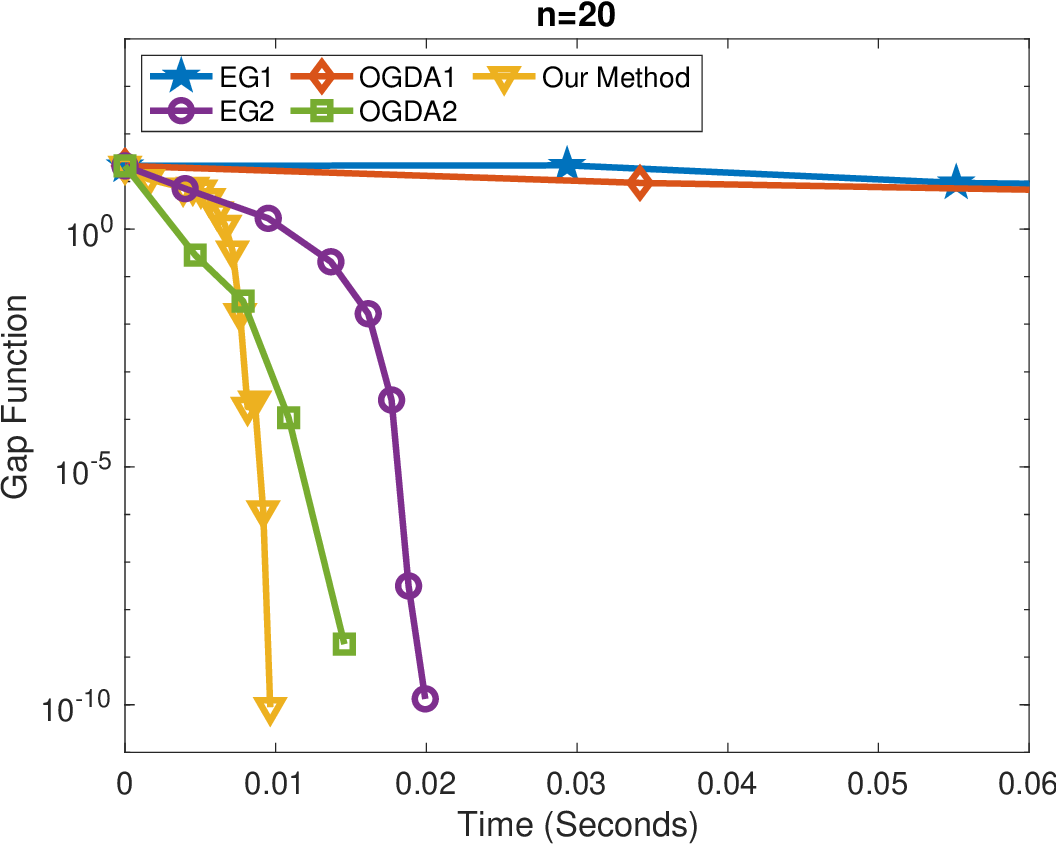}
\includegraphics[width=0.32\textwidth]{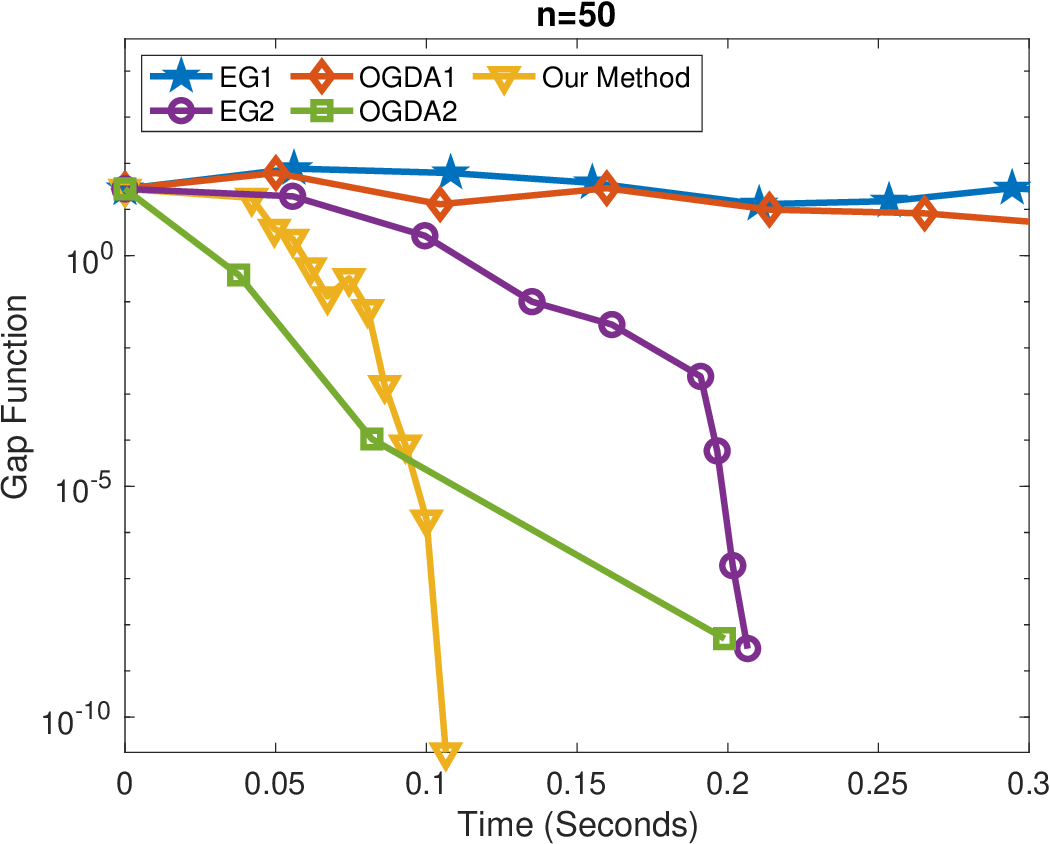}
\includegraphics[width=0.32\textwidth]{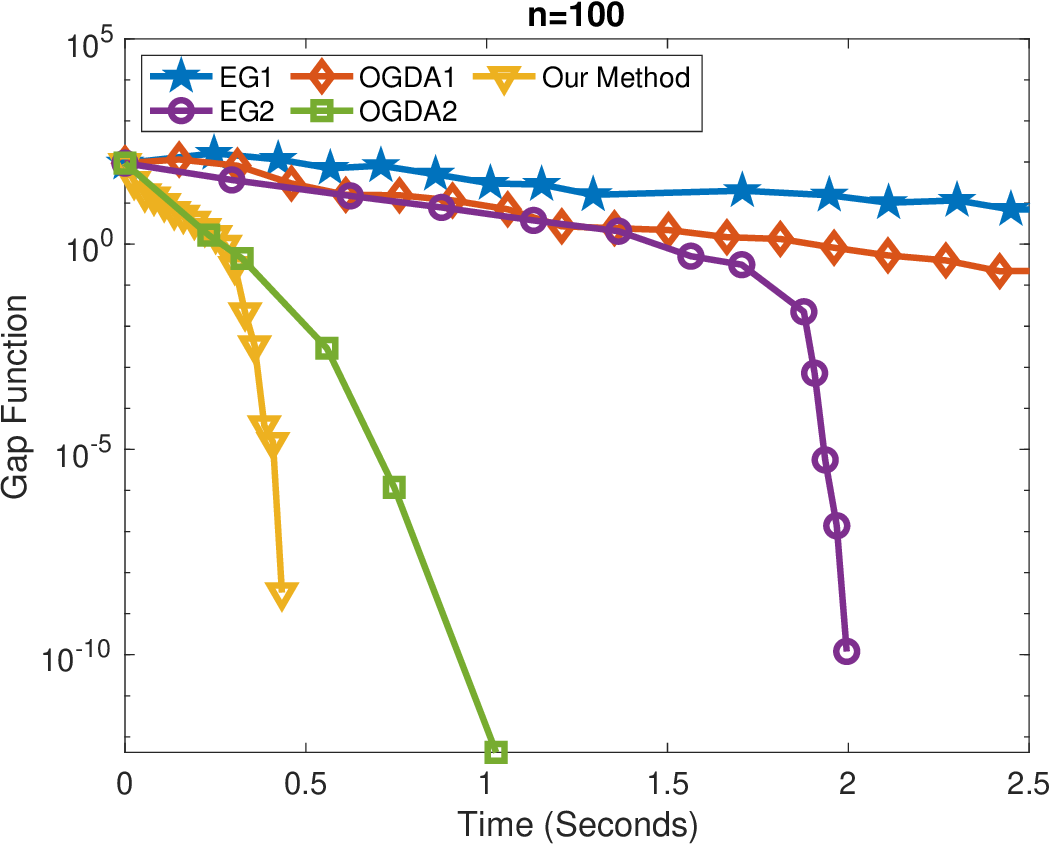} \\
\includegraphics[width=0.32\textwidth]{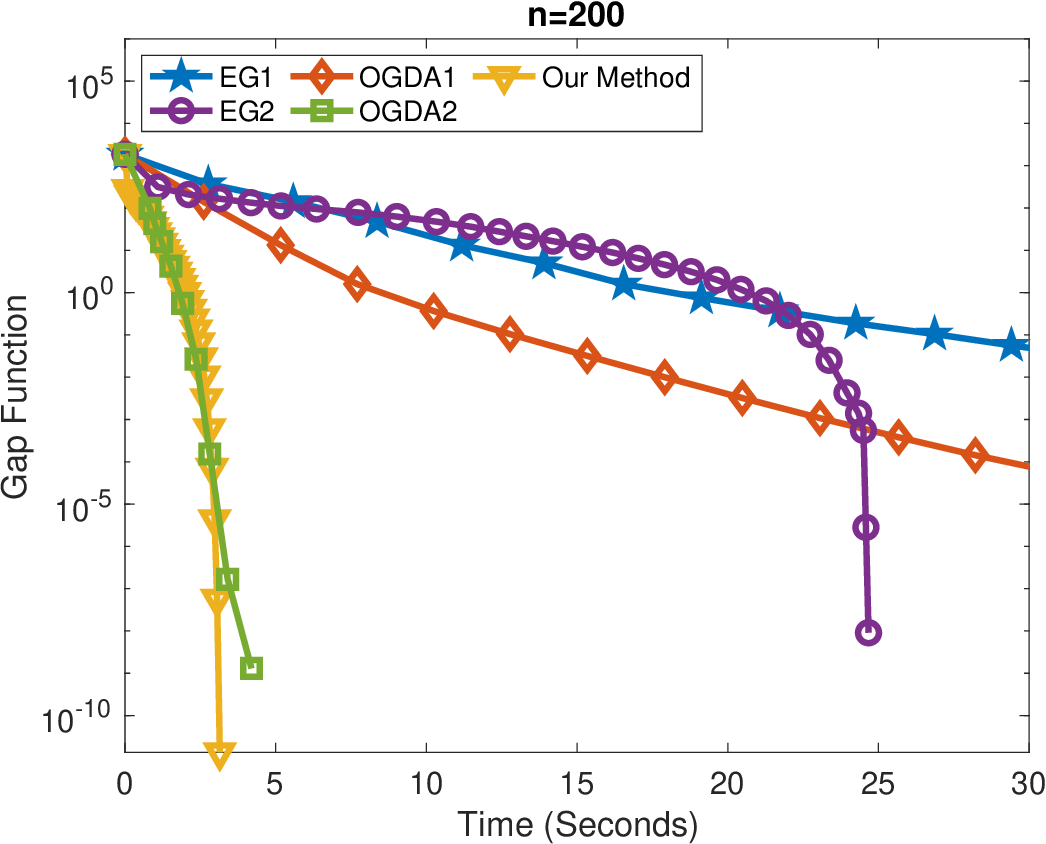}
\includegraphics[width=0.32\textwidth]{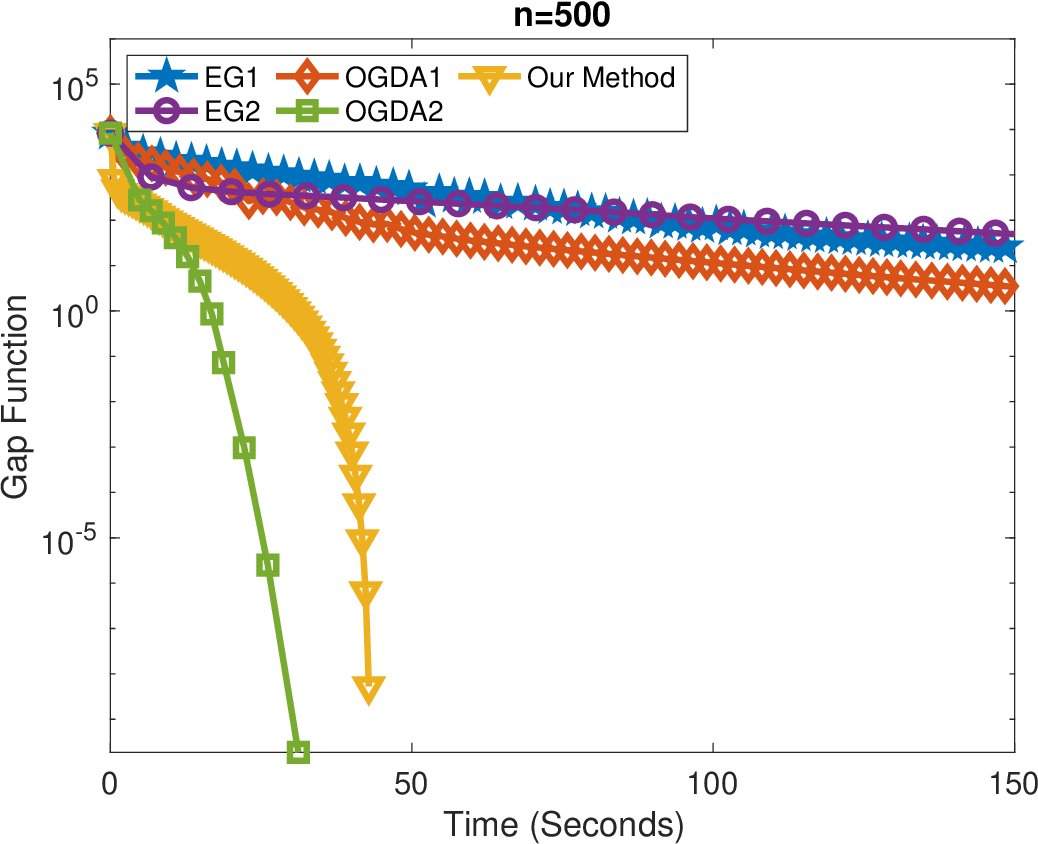}
\includegraphics[width=0.32\textwidth]{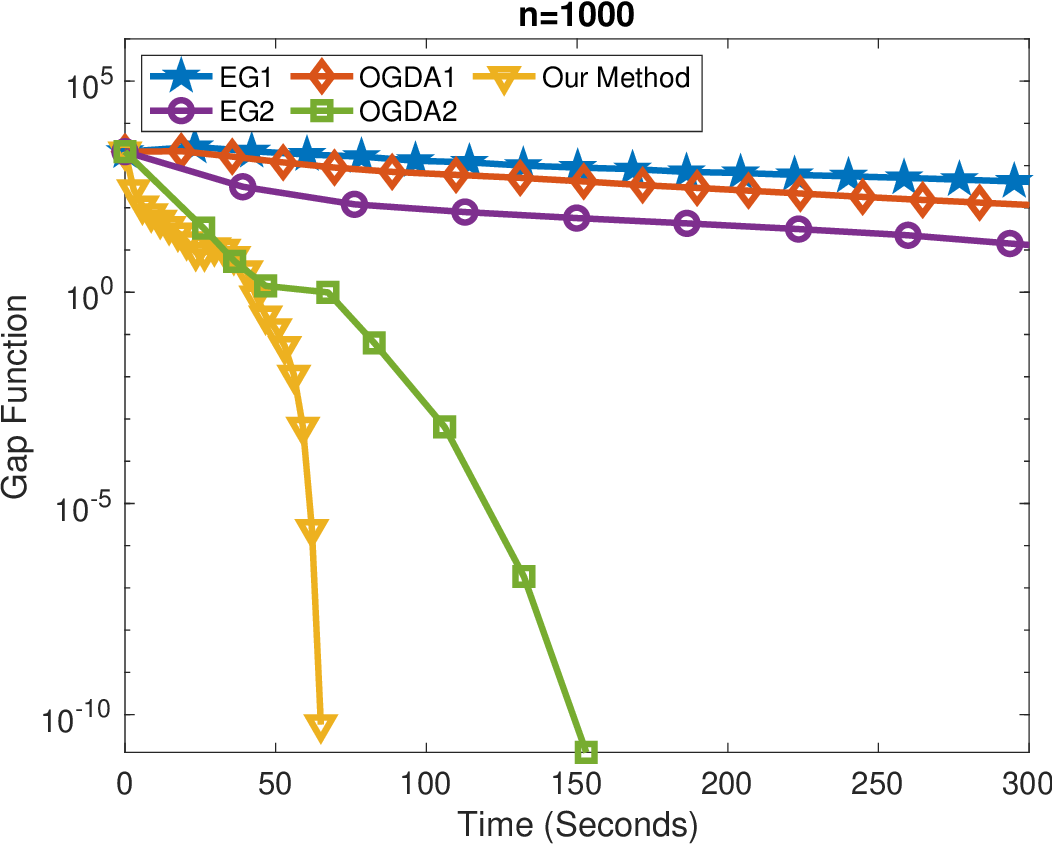} \\ 
\caption{Performance of all algorithms for $n \in \{20, 50, 100, 200, 500, 1000\}$ when $\rho = \frac{1}{20n}$. The numerical results are presented in terms of iteration count (Top) and computational time (Bottom).}\label{fig:cubic} 
\end{figure*} 

\paragraph{Cubic regularized bilinear min-max problem.} Following the setup of~\citet{Jiang-2025-Generalized}, we consider the problem in the following form: 
\begin{equation}\label{prob:cubic}
\min_{\x \in \br^n} \max_{\y \in \br^n} \ f(\x, \y) = \tfrac{\rho}{6}\|\x\|^3 + \y^\top(A\x - \bb), 
\end{equation}
where $\rho > 0$, the entries of $\bb \in \br^n$ are generated independently from $[-1, 1]$ and
\begin{equation} \small
A = \begin{bmatrix}
1 & -1 & & & \\ & 1 & -1 & & \\ & & \ddots & \ddots & \\ & & & 1 & -1 \\ & & & & 1
\end{bmatrix} \in \br^{n \times n}. 
\end{equation}
This min-max optimization problem is convex-concave and the function $f$ is $\rho$-Hessian Lipschitz. It has a unique global saddle point $\x^\star = A^{-1}\bb$ and $\y^\star = -\frac{\rho}{2}\|\x^\star\|(A^\top)^{-1}\x^\star$. We use the restricted gap function defined in Section~\ref{sec:prelim} as the evaluation metric. In our experiment, the parameters are chosen as $\rho = \frac{1}{20n}$ and $n \in \{50, 100, 200\}$. Since exact second-order information is available here, we use Algorithm~\ref{Algorithm:Inexact-Newton} with $\tau_0 = 0$ (the exact Hessian) and compute each subproblem with $\kappa_m = 10^{-6}$ using our subroutine. The baseline methods include EG1, OGDA1, EG2 and OGDA2 where all of these methods require exact second-order information. We implement EG2 using the pseudocode of~\citet[Algorithm~5.2 and 5.3]{Bullins-2022-Higher} with the fine-tuning parameters. The implementation of OGDA2 is based on the code provided by the author of~\citet{Jiang-2025-Generalized} with the line-search parameter $(\alpha, \beta) =(0.5, 0.8)$. 

Figure~\ref{fig:cubic} illustrates a clear accuracy gap between first- and second-order methods: the first-order baselines make little progress at stringent tolerances, whereas second-order methods reach high-accuracy solutions. Among the latter, our method is consistently competitive -- and often faster in both iterations and wall-clock time -- which we attribute to its safeguarded update without line search. This does not negate the value of line search in min-max optimization: the scheme of \citet{Jiang-2025-Generalized} can outperform our method with aggressive $(\alpha,\beta)$ on some instances, but such tuning is also less stable across runs and dimensions; we therefore use the conservative setting $(\alpha,\beta)=(0.5,0.8)$ for robustness. The figure further reports results up to $n=1000$, where our method remains stable as $n$ increases, though relative margins may narrow when conditioning and linear-algebra costs dominate. These results suggest that safeguards provide reliable time-to-accuracy, and it would be interesting future work to modify the line-search scheme of \citet{Jiang-2025-Generalized} to achieve similarly robust acceleration.

\begin{figure*}[!t]
\centering
\includegraphics[width=0.32\textwidth]{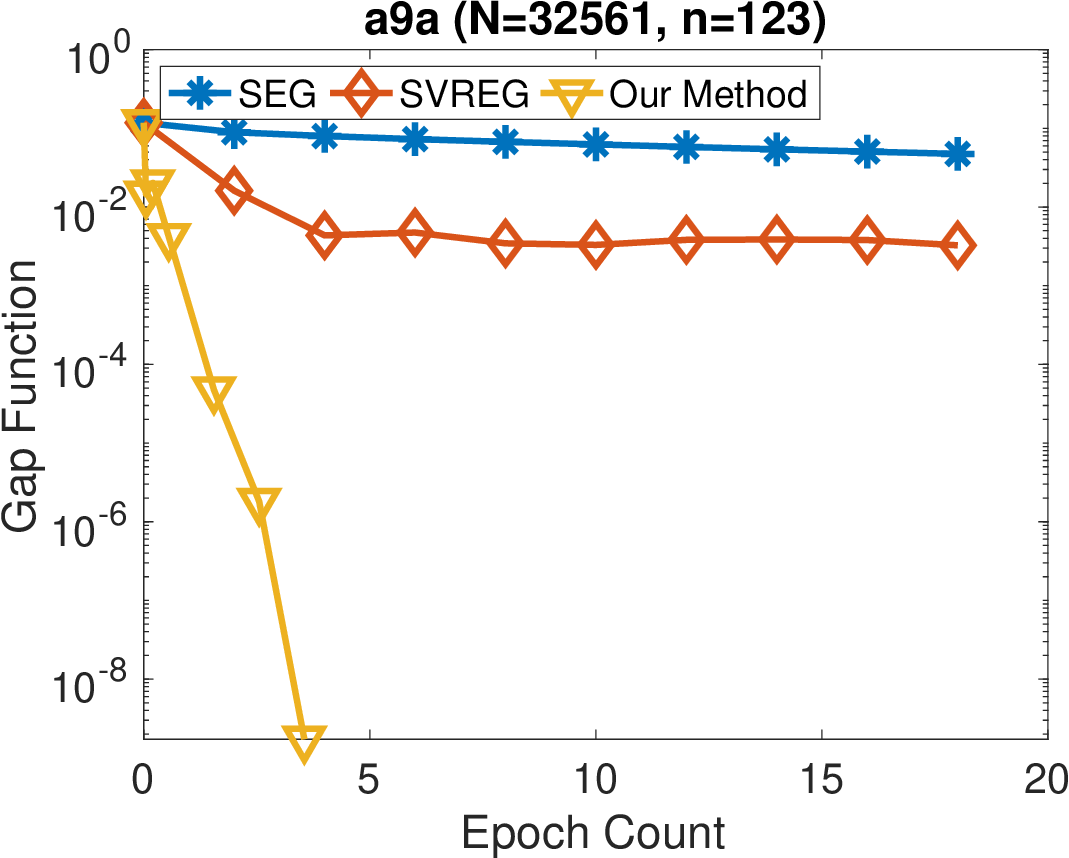}
\includegraphics[width=0.32\textwidth]{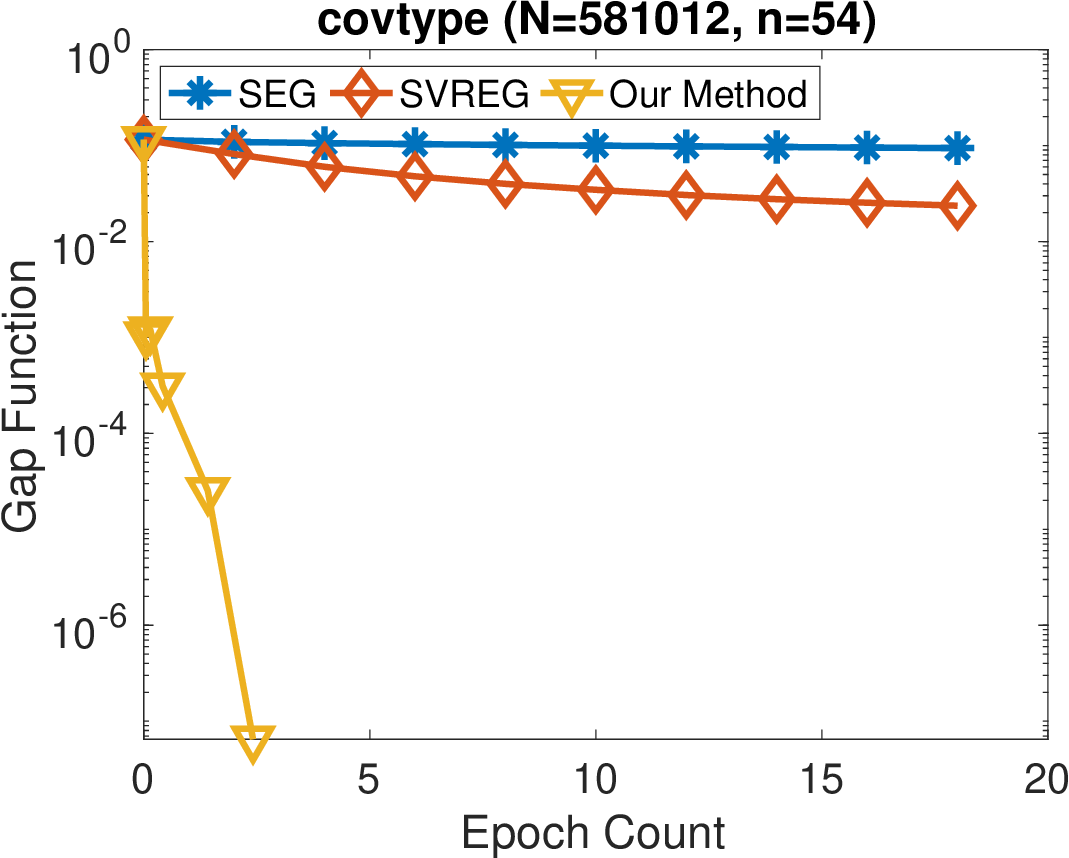}
\includegraphics[width=0.32\textwidth]{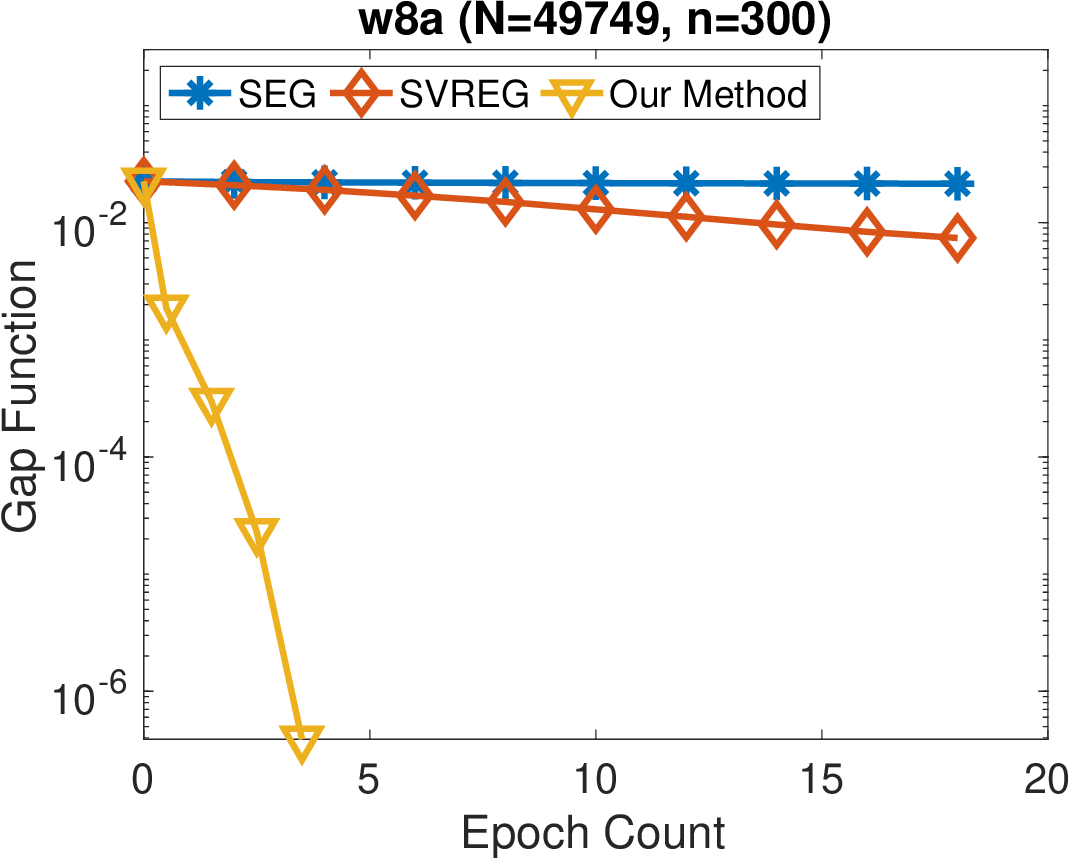} \\ 
\includegraphics[width=0.32\textwidth]{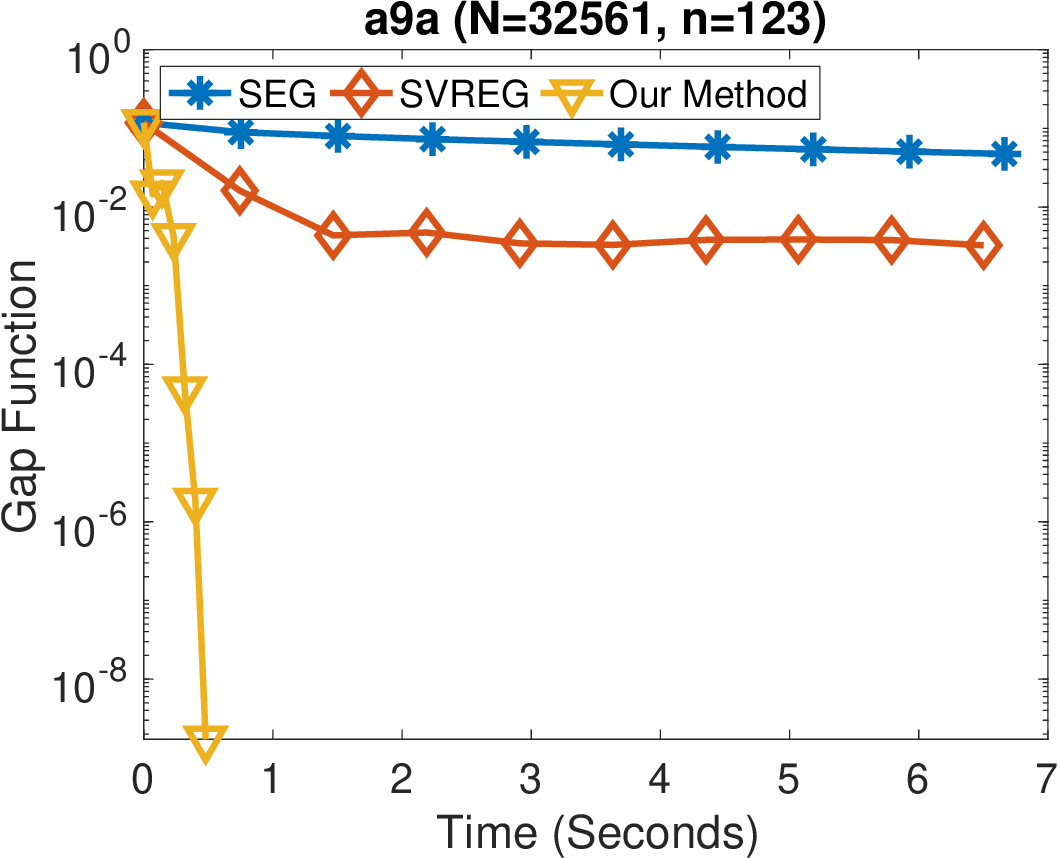}
\includegraphics[width=0.32\textwidth]{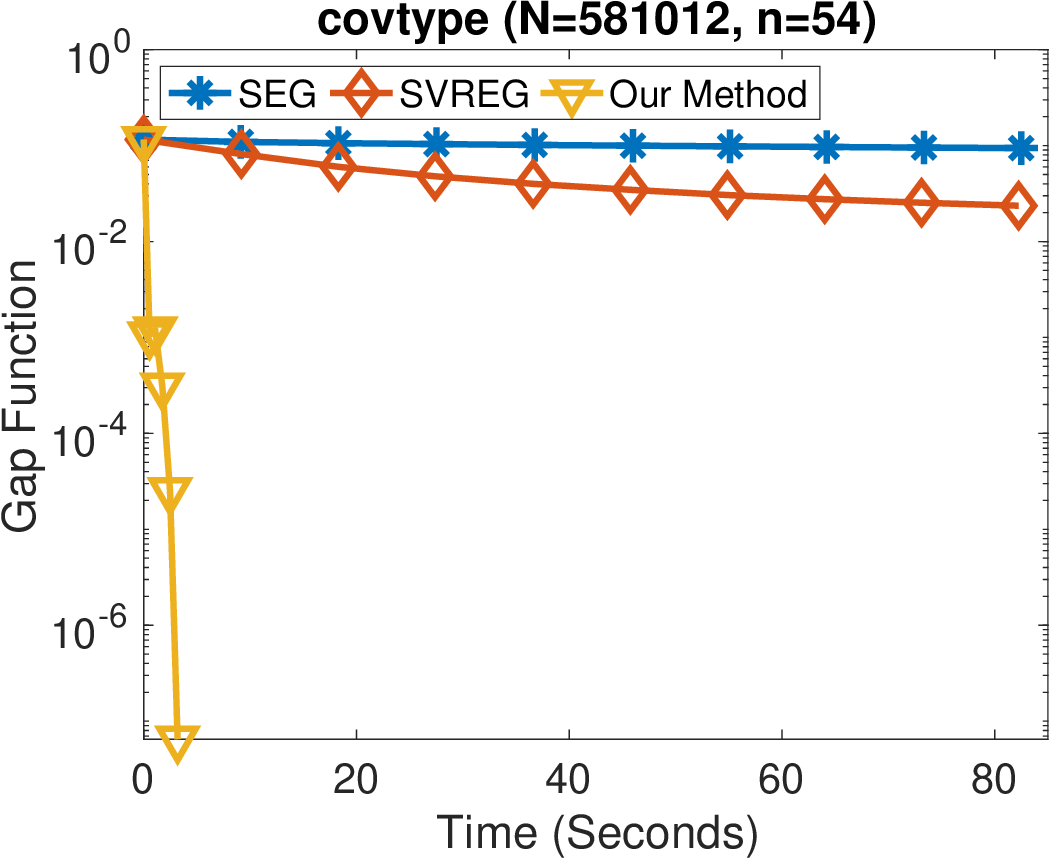}
\includegraphics[width=0.32\textwidth]{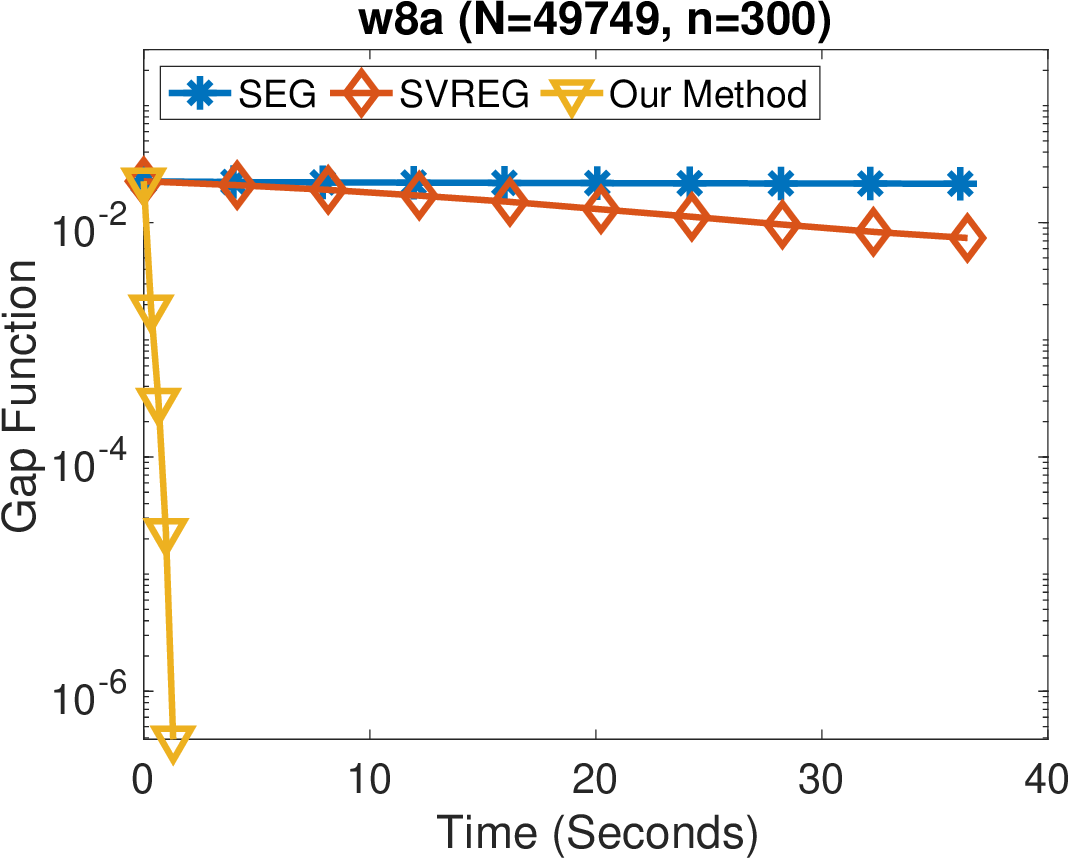}
\caption{Performance of all algorithms with 3 \textsf{LIBSVM} datasets when $\rho = \frac{1}{N}$. The numerical results are presented in terms of iteration count (Top) and computational time (Bottom).}\label{fig:auc}
\end{figure*}

\paragraph{AUC maximization problem.} The problem of maximizing an area under the receiver operating characteristic curve is a paradigm that learns a classifier for imbalanced data~\citep{Yang-2022-AUC}. The goal is to find a classifier $\theta \in \br^n$ that maximizes the AUC score on a set of samples $\{(\sa_i, b_i)\}_{i=1}^N$, where $\sa_i \in \br^n$ and $b_i \in \{-1, +1\}$. The min-max formulation for AUC maximization~\citep{Ying-2016-Stochastic, Shen-2018-Towards} is:
\begin{eqnarray}\label{prob:AUC}
\lefteqn{\min_{\x = (\theta, u, v)} \max_y \ \tfrac{1-\hat{p}}{N}\left\{\sum_{i=1}^N (\theta^\top\sa_i - u)^2\mathbb{I}_{[b_i = 1]}\right\} + \tfrac{\hat{p}}{N}\left\{\sum_{i=1}^N (\theta^\top\sa_i - v)^2\mathbb{I}_{[b_i = -1]}\right\}} \\
& &  + \tfrac{2(1+y)}{N}\left\{\sum_{i=1}^N \theta^\top\sa_i(\hat{p}\mathbb{I}_{[b_i = -1]} - (1-\hat{p})\mathbb{I}_{[b_i = 1]})\right\} + \tfrac{\rho}{6}\|\x\|^3 - \hat{p}(1-\hat{p})y^2,  \nonumber
\end{eqnarray}
where $\lambda > 0$ is a scalar, $\mathbb{I}_{[\cdot]}$ is an indicator function and $\hat{p} = \frac{\#\{i: b_i = 1\}}{N}$ be the proportion of samples with positive label. It is clear that the min-max optimization problem in Eq.~\eqref{prob:AUC} is convex-concave and has the finite-sum structure in the form of Eq.~\eqref{prob:finite-sum-I} with the function $f_i(\x, y)$ given by 
\begin{eqnarray*}
\lefteqn{f_i(\x, y) = (1-\hat{p})(\theta^\top\sa_i - u)^2\mathbb{I}_{[b_i = 1]} + \hat{p}(\theta^\top\sa_i - v)^2\mathbb{I}_{[b_i = -1]}} \\ 
& & + 2(1+y)\theta^\top\sa_i(\hat{p}\mathbb{I}_{[b_i = -1]} - (1-\hat{p})\mathbb{I}_{[b_i = 1]}) + \tfrac{\rho}{6}\|\x\|^3 - \hat{p}(1-\hat{p})y^2. 
\end{eqnarray*}
The above function is in the cubic form and the min-max optimization problem has a global saddle point. We use the restricted gap function as the evaluation metric. In our experiment, the parameter is chosen empirically as $\rho = \frac{1}{N}$ and we use 3 \textsf{LIBSVM} datasets\footnote{https://www.csie.ntu.edu.tw/$\sim$cjlin/libsvm/} for AUC maximization. Since this min-max problem has a finite-sum structure, we can apply Algorithm~\ref{Algorithm:SS-Newton} with uniform sampling. The baseline methods include stochastic first-order methods~\citep{Juditsky-2011-Solving, Hsieh-2019-Convergence, Mertikopoulos-2019-Optimistic} and stochastic variance-reduced first-order methods~\citep{Alacaoglu-2022-Stochastic}.\footnote{The stochastic variants of EG and OGDA exhibit similar performance in iteration count and computational time so we include only one of them (which we refer to as SEG).} Among the variance-reduced methods proposed in~\citet{Alacaoglu-2022-Stochastic}, we include only one representative (which we refer to as SVREG) due to their comparable performance. We choose the stepsizes for stochastic first-order methods in the form of $\frac{c}{\sqrt{k+1}}$ where $c>0$ is tuned using grid search and $k \geq 0$ is an iteration count. We choose the default parameters for variance-reduced first-order methods as in~\citet{Alacaoglu-2022-Stochastic}. For Algorithm~\ref{Algorithm:SS-Newton}, we choose $\kappa_m = 10^{-6}$, $\delta=0.01$ and $|\SCal_k|=\frac{20\log(d+3)}{\min\{\|F(\hat{\z}_k)\|^2, \|F(\z_k)\|^2\}}$. Unless stated otherwise, we use the same hyperparameters for all datasets. We use $\min\{\|F(\hat{\z}_k)\|^2, \|F(\z_k)\|^2\}$ instead of $\|F(\hat{\z}_k)\|^2$ for practical purpose and such choice does not violate our theoretical results. For the subproblem solving, we apply the safeguarded Newton-bisection/Newton variants described in Section~\ref{subsec:inexact_subproblem}. 

Figure~\ref{fig:auc} shows that Algorithm~\ref{Algorithm:SS-Newton} outperforms stochastic first-order methods in terms of solution quality: stochastic first-order methods barely move when Algorithm~\ref{Algorithm:SS-Newton} makes significant progress. Notably, Algorithm~\ref{Algorithm:SS-Newton} exhibits (super)-linear convergence as the iterates approach the optimal set regardless of subsampling and inexact subproblem solving. This intriguing property has been rigorously justified for subsampled Newton methods for convex optimization~\citep{Roosta-2019-Sub}, and extending such results to convex-concave min-max optimization is an interesting future direction.

\paragraph{Hyperparameters and practical sensitivity.} We discuss the algorithmic hyperparameters and their practical sensitivity. In Algorithm~\ref{Algorithm:SS-Newton}, the main choices are: (i) the safeguard parameter $\kappa_m$ in Condition~\ref{condition:SIS}; (ii) the confidence parameter $\delta$ in the subsampling rule, which influences $|S_k|$ only through logarithmic factors; and (iii) the stopping tolerance for the one-dimensional root-finding routine used to compute the step parameter in the update. Across all synthetic and real experiments, we use a single default configuration without dataset-specific tuning and observe stable behavior. Smaller $\kappa_m$ or a tighter root-finding tolerance typically increases the number of inner linear solves but can improve the final accuracy, whereas looser settings reduce per-iteration cost while retaining stability due to the safeguarded Newton--bisection design. The parameter $\delta$ enters the sample-size schedule only via logarithmic terms, and setting $\delta$ to a small constant (e.g., $10^{-2}$) provides a reliable default in practice. The admissible inexactness level $\tau_k$ in Condition~\ref{condition:IHR} depends on $(\kappa_m,\kappa_J,\rho)$ and $\|F(\hat{\z}_k)\|$, affecting constants and inner-loop work but not the overall structure: each outer iteration requires one Schur decomposition and only a few quasi-triangular linear solves, with doubly-logarithmic dependence on the target inner accuracy in the local Newton regime. While a systematic sensitivity sweep is an interesting direction for future work, these results suggest the method is robust and performs well with fixed defaults across the reported settings.

\paragraph{Scope of empirical evaluation.} Our experiments are designed to validate the practical behavior of our methods under both deterministic and finite-sum settings, with an emphasis on the regimes targeted by our theory, i.e., simultaneous inexactness in (i) constructing second-order information and (ii) solving the resulting subproblems. Beyond synthetic ones, we include AUC maximization on LIBSVM datasets as a non-quadratic finite-sum learning problem, and we evaluate the subsampled variant that constructs $J(\hat{\z}_k)$ from minibatches, thereby reflecting the inexact-Jacobian scenario studied in Section~\ref{subsec:inexact_Jacobian}. In this finite-sum regime, the minibatch size and the schedule controlling the construction accuracy of $J(\hat{\z}_k)$ provide a principled and algorithmically explicit way to modulate the ``Jacobian noise'' induced by subsampling; correspondingly, our safeguarded globalization (acceptance and Newton--bisection fallback) is designed to remain stable under moderate inaccuracies while progressively tightening accuracy as higher solution precision is required. 

We emphasize, however, that our experiments are not intended to be exhaustive across all large-scale ML tasks. A comprehensive benchmark suite would include more datasets and model classes, systematic sweeps over subsampling-induced noise levels, and comparisons to quasi-Newton and limited-memory alternatives, especially in large-scale regimes where factorization-based second-order methods are memory-limited and where baseline performance depends sensitively on choices such as curvature updates, damping/safeguarding, and stopping criteria. That being said, we phrase our empirical conclusions as evidence of stable behavior and competitive efficiency on the tested instances, rather than as a universal dominance claim; we view more benchmarking, quasi-Newton comparisons, and dedicated noise-sensitivity studies as future works.

\section{Conclusion}\label{sec:conclu}
We propose several inexact regularized Newton-type methods for finding a global saddle point of unconstrained convex-concave min-max optimization problems. Our inexact methods are guaranteed to achieve an iteration complexity of $O(\epsilon^{-2/3})$ and an improved complexity bound of $O((m+n)^\omega\epsilon^{-2/3} + (m+n)^2\epsilon^{-2/3}\log\log(1/\epsilon))$ where $\omega \approx 2.3728$ is the matrix multiplication constant. In addition, we show that our general framework and analysis yields the first class of subsampled Newton method for solving finite-sum min-max optimization problems with the same iteration complexity of $O(\epsilon^{-2/3})$. Future research directions include the extension of our methods to structured nonconvex-nonconcave min-max optimization problems and the customized implementation of our methods in real application problems.

\section*{Acknowledgments}
This work was supported in part by the Mathematical Data Science program of the Office of Naval Research under grant number N00014-18-1-2764 and by the Vannevar Bush Faculty Fellowship program under grant number N00014-21-1-2941. Panayotis Mertikopoulos is also a member of Archimedes/Athena RC and acknowledges financial support by the French National Research Agency (ANR) in the framework of the PEPR IA FOUNDRY project (ANR-23-PEIA-0003), and project MIS 5154714 of the National Recovery and Resilience Plan Greece 2.0, funded by the European Union under the NextGenerationEU Program.

\bibliographystyle{plainnat}
\bibliography{ref}

\newpage
\appendix
\section{Related Work}
Our work comes amid a surge of interest in optimization algorithms for a large class of emerging min-max optimization problems. For brevity, we focus on convex-concave settings and will leave other settings aside; see Section~2 in~\citet{Lin-2020-Near} for a more thorough presentation of algorithms and complexity results. 

Historically, a concrete instantiation of the convex-concave min-max optimization problem is the solution of $\min_{\x \in \Delta^m} \max_{\y \in \Delta^n} \x^\top A \y$ for $A \in \br^{m \times n}$ over the simplices $\Delta^m$ and $\Delta^n$. Spurred by von Neumann's theorem~\citep{Neumann-1928-Theorie}, this problem provided the initial impetus for min-max optimization.~\citet{Sion-1958-General} generalized von Neumann's result from bilinear cases to convex-concave cases and triggered a line of research on algorithms for convex-concave min-max optimization~\citep{Korpelevich-1976-Extragradient, Nemirovski-2004-Prox, Nesterov-2007-Dual, Nedic-2009-Subgradient, Mokhtari-2020-Convergence}. A notable result is that gradient descent ascent (GDA) with diminishing stepsizes can find an $\epsilon$-global saddle point within $O(\epsilon^{-2})$ iterations if the gradients are bounded over the feasible sets~\citep{Nedic-2009-Subgradient}. 

Recent years have witnessed much progress on the design and analysis of first-order min-max optimization algorithms in bilinear cases and convex-concave cases. In the bilinear case,~\citet{Daskalakis-2018-Training} proved the convergence of OGDA to a neighborhood of a global saddle point.~\citet{Liang-2019-Interaction} used a dynamical system approach to establish the linear convergence of OGDA for the special case when the matrix $A$ is square and full rank.~\citet{Mokhtari-2020-Unified} revisited a proximal point algorithmic framework and proposed an unified analysis for achieving the sharpest convergence rates of both EG and OGDA. In the convex-concave case,~\citet{Nemirovski-2004-Prox} demonstrated that EG finds an $\epsilon$-global saddle point within $O(\epsilon^{-1})$ iterations when the feasible sets are convex and bounded. The same convergence guarantee was extended to unbounded sets~\citep{Monteiro-2010-Complexity, Monteiro-2011-Complexity} using the HPE method with different criteria. In a similar vein,~\citet{Nesterov-2007-Dual} and~\citet{Tseng-2008-Accelerated} proposed new algorithms and refined convergence analysis with the same convergence guarantee.~\citet{Abernethy-2021-Last} presented a Hamiltonian gradient descent method with last-iterate convergence under a ``sufficiently bilinear" condition. Focusing on a special class of min-max optimization problems with separable structure: $f(\x, \y) = g(\x) + \x^\top A\y - h(\y)$,~\citet{Chambolle-2011-First} introduced a primal-dual hybrid gradient method that converges to a global saddle point at the rate of $O(\epsilon^{-1})$ when the functions $g$ and $h$ are simple and convex.~\citet{Nesterov-2005-Smooth} introduced a smoothing technique and proved that the resulting algorithm achieved better dependence on some problem parameters for convex and bounded feasible sets.~\citet{He-2016-Accelerated} and~\citet{Kolossoski-2017-Accelerated} proved that such a result holds for unbounded sets and non-Euclidean metrics. In addition,~\citet{Chen-2014-Optimal} proposed a new class of stochastic algorithms for solving separable min-max optimization problems when only noisy gradients are available and proved the optimal convergence guarantee. 

There are also several papers on finite-sum min-max optimization and variance reduction~\citep{Balamurugan-2016-Stochastic, Chavdarova-2019-Reducing, Carmon-2019-Variance, Yang-2020-Catalyst, Luo-2021-Near, Alacaoglu-2022-Stochastic}. In the strongly-convex-strongly-concave cases,~\cite{Balamurugan-2016-Stochastic} studied stochastic variance-reduced variants of forward-backward methods and proved linear convergence.~\citet{Chavdarova-2019-Reducing} combined EG with variance reduction and proved new convergence guarantee which was however less favorable than~\cite{Balamurugan-2016-Stochastic}. Recently,~\citet{Luo-2021-Near} have considered unbalanced strongly-convex-strongly-concave cases and proposed variance-reduced first-order methods with an improved complexity bound. In the bilinear case,~\citet{Carmon-2019-Variance} proposed a randomized mirror-prox method with an improved complexity guarantee. However, extending their results to general convex-concave cases requires additional restrictive assumptions.~\citet{Alacaoglu-2022-Stochastic} addressed this issue by leveraging a new variance reduction technique and their methods either match or improve the best-known complexities for solving convex-concave finite-sum min-max problems. Beyond the convex-concave cases,~\citet{Yang-2020-Catalyst} provided various variance-reduced first-order methods for solving structured nonconvex-concave min-max optimization problems. 

Compared to first-order methods, there has been less research on second-order methods for min-max optimization problems with \textit{global} convergence rate guarantee. In this context, we are aware of two research thrusts~\citep{Monteiro-2012-Iteration, Bullins-2022-Higher, Huang-2022-Cubic, Adil-2022-Optimal,Lin-2023-Monotone, Lin-2025-Perseus, Jiang-2025-Generalized, Chen-2025-Solving}. Our results contribute to this landscape by proposing the first explicit method that achieves the iteration complexity of $O(\epsilon^{-2/3})$ as well as a new complexity bound of $O((m+n)^\omega\epsilon^{-2/3} + (m+n)^2\epsilon^{-2/3}\log\log(1/\epsilon))$. As far as we know, the above complexity bound guarantees cannot be realized by other existing second-order min-max optimization methods with exact second-order information requirements. Subsequent to our work, new strategies were designed for improving second-order min-max optimization methods~\citep{Alves-2024-Search, Jiang-2024-Adaptive, Chen-2025-Second}, thereby matching and even surpassing our results. 

\section{Additional Proofs from Section~\ref{sec:prelim}}
\paragraph{Proof of Lemma~\ref{Lemma:basic-property}.} Note that (a) and (c) were proven in~\citet{Nemirovski-2004-Prox}, and it suffices to prove (b). By using the definition of $DF(\cdot)$ in Eq.~\eqref{def:Jacobian}, we have 
\begin{equation}
(DF(\z) - DF(\z'))\textbf{h} = \begin{bmatrix} I_m & \\ & -I_n \end{bmatrix}(\hess f(\z) - \hess f (\z'))\textbf{h}.  
\end{equation}
This implies that $\|(DF(\z) - DF(\z'))\textbf{h}\| = \|(\hess f(\z) - \hess f (\z'))\textbf{h}\|$. Thus, we have 
\begin{equation*}
\|DF(\z) - DF(\z')\| = \sup_{\textbf{h} \neq 0} \left\{\tfrac{\|(DF(\z) - DF(\z'))\textbf{h}\|}{\|\textbf{h}\|}\right\} = \sup_{\textbf{h} \neq 0} \left\{\tfrac{\|(\hess f(\z) - \hess f (\z'))\textbf{h}\|}{\|\textbf{h}\|}\right\}. 
\end{equation*}
This equality together with Assumption~\ref{Assumption:main-smooth} implies the desired result in (b). \hfill$\square$

\paragraph{Proof of Proposition~\ref{Prop:averaging}.} Using the definition of the operator $F(\cdot)$ in Eq.~\eqref{def:grad}, we have 
\begin{equation*}
\sum_{i=1}^k \lambda_i (\z_i - \z)^\top F(\z_i) = \sum_{i=1}^k \lambda_i ((\x_i - \x)^\top \gradx f(\x_i, \y_i) - (\y_i - \y)^\top \grady f(\x_i, \y_i)). 
\end{equation*}
Note that Assumption~\ref{Assumption:main} implies that the function $f(\x, \y)$ is a convex function of $\x$ for any $\y \in \br^n$ and a concave function of $\y$ for any $\x \in \br^m$. Then, we have 
\begin{eqnarray*}
(\x_i - \x)^\top \gradx f(\x_i, \y_i) & \geq & f(\x_i, \y_i) - f(\x, \y_i), \\ 
(\y_i - \y)^\top \grady f(\x_i, \y_i) & \leq & f(\x_i, \y_i) - f(\x_i, \y). 
\end{eqnarray*}
Putting these pieces together with $\lambda_i > 0$ for all $1 \leq i \leq k$ yields that 
\begin{equation}\label{Eq:averaging-main}
\tfrac{1}{\sum_{i=1}^k \lambda_i}\left(\sum_{i=1}^k \lambda_i (\z_i - \z)^\top F(\z_i)\right) \geq \tfrac{1}{\sum_{i=1}^k \lambda_i}\left(\sum_{i=1}^k \lambda_i (f(\x_i, \y) - f(\x, \y_i))\right). 
\end{equation}
Using the definition of $(\bar{\x}_k, \bar{\y}_k)$ in Eq.~\eqref{def:average_iterate} and that $f$ is convex-concave, we have
\begin{equation*}
\tfrac{1}{\sum_{i=1}^k \lambda_i}\left(\sum_{i=1}^k \lambda_i f(\x_i, \y)\right) \geq f(\bar{\x}_k, \y), \qquad \tfrac{1}{\sum_{i=1}^k \lambda_i}\left(\sum_{i=1}^k \lambda_i f(\x, \y_i)\right) \leq f(\x, \bar{\y}_k). 
\end{equation*}
Plugging these two inequalities in Eq.~\eqref{Eq:averaging-main} yields the desired inequality. \hfill$\square$

\section{Additional Proofs from Section~\ref{sec:exact}}\label{sec:exact_app}
The key step is to define a Lyapunov function for Algorithm~\ref{Algorithm:Newton}: $\ECal_t = \tfrac{1}{2}\|\hat{\z}_t - \z_0\|^2$, which can be used to prove technical results that pertain to the convergence property of Algorithm~\ref{Algorithm:Newton}. The first lemma gives us a descent inequality. 
\begin{lemma}\label{Lemma:Newton-descent}
Suppose that Assumptions~\ref{Assumption:main} and~\ref{Assumption:main-smooth} hold.
Then
\begin{equation*}
\sum_{k=1}^t \lambda_k (\z_k - \z)^\top F(\z_k) \leq \ECal_0 - \ECal_t + (\z_0 - \hat{\z}_t)^\top (\z_0 - \z) - \tfrac{1}{24}\left(\sum_{k=1}^t \|\z_k - \hat{\z}_{k-1}\|^2\right). 
\end{equation*}
\end{lemma}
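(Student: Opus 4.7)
My plan is to reduce the lemma to a per-iteration descent inequality of the form
\begin{equation*}
2\lambda_k F(\z_k)^\top(\z_k - \z) \leq \|\hat{\z}_{k-1} - \z\|^2 - \|\hat{\z}_k - \z\|^2 - \tfrac{1}{12}\|\z_k - \hat{\z}_{k-1}\|^2,
\end{equation*}
then telescope over $k$ and match the boundary terms to $\ECal_t$ via a single completed square. Throughout, write $\Delta\z_{k-1} := \z_k - \hat{\z}_{k-1}$, so that $\hat{\z}_k = \hat{\z}_{k-1} - \lambda_k F(\z_k)$ gives $\z_k - \hat{\z}_k = \Delta\z_{k-1} + \lambda_k F(\z_k)$.

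The algebraic part is clean. Expanding $\|\hat{\z}_k - \z\|^2$ through the update rule and decomposing $\hat{\z}_{k-1} - \z = (\hat{\z}_{k-1} - \z_k) + (\z_k - \z)$ yields the exact identity
\begin{equation*}
2\lambda_k F(\z_k)^\top(\z_k - \z) = \|\hat{\z}_{k-1} - \z\|^2 - \|\hat{\z}_k - \z\|^2 + \|\z_k - \hat{\z}_k\|^2 - \|\Delta\z_{k-1}\|^2,
\end{equation*}
so the target reduces to the one-step contraction $\|\z_k - \hat{\z}_k\|^2 - \|\Delta\z_{k-1}\|^2 \leq -\tfrac{1}{12}\|\Delta\z_{k-1}\|^2$, or equivalently, after squaring $\z_k - \hat{\z}_k = \Delta\z_{k-1} + \lambda_k F(\z_k)$, to the estimate $2\lambda_k F(\z_k)^\top \Delta\z_{k-1} + \lambda_k^2\|F(\z_k)\|^2 \leq -\tfrac{1}{12}\|\Delta\z_{k-1}\|^2$. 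This is the only step where second-order information enters.

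To establish this contraction, I would extract from the KKT conditions of the cubic subproblem~\eqref{subprob:second-order} at $\hat{\z}_{k-1}$ the stationarity relation
\begin{equation*}
F(\hat{\z}_{k-1}) + DF(\hat{\z}_{k-1})\Delta\z_{k-1} + R_{k-1} = 0, \qquad R_{k-1} = \bigl(6\rho\|\Delta\x_{k-1}\|\Delta\x_{k-1},\ 6\rho\|\Delta\y_{k-1}\|\Delta\y_{k-1}\bigr).
\end{equation*}
A Taylor expansion of $F$ around $\hat{\z}_{k-1}$, controlled by the Jacobian-Lipschitz bound of Lemma~\ref{Lemma:basic-property}(b), then gives $F(\z_k) = -R_{k-1} + e_k$ with $\|e_k\| \leq \tfrac{\rho}{2}\|\Delta\z_{k-1}\|^2$. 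Two estimates follow: (i) $\|F(\z_k)\| \leq \tfrac{13\rho}{2}\|\Delta\z_{k-1}\|^2$, combined with the upper bound $\lambda_k\rho\|\Delta\z_{k-1}\|\leq 1/13$ from STEP~3 to yield $\lambda_k^2\|F(\z_k)\|^2 \leq \tfrac{1}{4}\|\Delta\z_{k-1}\|^2$; and (ii) the inner-product bound $F(\z_k)^\top \Delta\z_{k-1} \leq -(3\sqrt{2} - \tfrac{1}{2})\rho\|\Delta\z_{k-1}\|^3$, obtained from the identity $\Delta\z_{k-1}^\top R_{k-1} = 6\rho(\|\Delta\x_{k-1}\|^3 + \|\Delta\y_{k-1}\|^3)$ together with the power-mean inequality $\|\Delta\x_{k-1}\|^3 + \|\Delta\y_{k-1}\|^3 \geq \tfrac{1}{\sqrt{2}}\|\Delta\z_{k-1}\|^3$. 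The lower bound $\lambda_k\rho\|\Delta\z_{k-1}\|\geq 1/15$ from STEP~3 converts (ii) into $2\lambda_k F(\z_k)^\top \Delta\z_{k-1} \leq -\tfrac{2(3\sqrt{2}-1/2)}{15}\|\Delta\z_{k-1}\|^2$, and adding the two contributions gives a net coefficient of $\tfrac{19 - 24\sqrt{2}}{60} \approx -0.249$, comfortably below $-\tfrac{1}{12}$.

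Finally, telescoping the per-step inequality over $k = 1,\ldots,t$ with $\hat{\z}_0 = \z_0$, halving, and rewriting $\tfrac{1}{2}\|\z_0 - \z\|^2 - \tfrac{1}{2}\|\hat{\z}_t - \z\|^2 = \ECal_0 - \ECal_t + (\z_0 - \hat{\z}_t)^\top(\z_0 - \z)$ by direct expansion of $\|\hat{\z}_t - \z\|^2 = \|(\hat{\z}_t - \z_0) + (\z_0 - \z)\|^2$ delivers the lemma. The main obstacle is the second-order estimate: because the cubic regularizer is applied \emph{componentwise} to $\Delta\x$ and $\Delta\y$ rather than jointly to $\Delta\z$, the residual $R_{k-1}$ is not a scalar multiple of $\Delta\z_{k-1}$, so extracting a clean cubic lower bound on $\Delta\z_{k-1}^\top R_{k-1}$ requires the power-mean step above; and the numerical window $[1/15,1/13]$ on $\lambda_k$ is calibrated precisely so that the favourable $\OCal(\rho\|\Delta\z_{k-1}\|^3)$ inner-product contribution strictly dominates the unfavourable $\OCal(\rho^2\|\Delta\z_{k-1}\|^4)$ norm-squared contribution after being rescaled by $\lambda_k$ and $\lambda_k^2$, respectively.
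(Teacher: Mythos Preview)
Your proof is correct and uses the same ingredients as the paper (the KKT identity for the cubic subproblem, the Jacobian-Lipschitz Taylor remainder, and the stepsize window $\lambda_k\rho\|\Delta\z_{k-1}\|\in[1/15,1/13]$), but the organization is genuinely different. The paper works with $\ECal_k-\ECal_{k-1}$ and reduces to bounding $\lambda_k(\z_k-\hat{\z}_k)^\top F(\z_k)-\tfrac{1}{2}\|\hat{\z}_k-\hat{\z}_{k-1}\|^2$; it keeps $\|\hat{\z}_k-\hat{\z}_{k-1}\|$ as a free variable, derives $(\z_k-\hat{\z}_k)^\top F(\z_k)\leq \tfrac{13\rho}{2}\|\Delta\z_{k-1}\|^2\|\hat{\z}_{k-1}-\hat{\z}_k\|-\tfrac{5\rho}{2}\|\Delta\z_{k-1}\|^3$, and then optimizes a quadratic in $\eta=\|\hat{\z}_{k-1}-\hat{\z}_k\|$ to get the $-\tfrac{1}{24}$ constant. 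You instead use the classical three-point identity on $\|\hat{\z}_k-\z\|^2$, which substitutes $\hat{\z}_k-\hat{\z}_{k-1}=-\lambda_k F(\z_k)$ up front and reduces everything to the two scalar estimates $\lambda_k^2\|F(\z_k)\|^2$ and $2\lambda_k F(\z_k)^\top\Delta\z_{k-1}$. Your route is a bit sharper (you use the power-mean constant $1/\sqrt{2}$ where the paper uses $1/2$, and you avoid the $\max_\eta$ step), yielding a per-step coefficient of roughly $-0.249$ against the required $-1/12$. The paper's presentation, on the other hand, makes the comparison with the inexact variant (Lemma~\ref{Lemma:Inexact-Newton-descent}) slightly more transparent, since there the term $(\z_k-\hat{\z}_k)^\top F(\z_k)$ is again the natural object to control.
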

\begin{proof}
Using the definition of the Lyapunov function, we have 
\begin{equation}\label{inequality:Newton-descent-first}
\ECal_k - \ECal_{k-1} = \tfrac{1}{2}\|\hat{\z}_k - \z_0\|^2 - \tfrac{1}{2}\|\hat{\z}_{k-1} - \z_0\|^2 = (\hat{\z}_k - \hat{\z}_{k-1})^\top (\hat{\z}_k - \z_0) - \tfrac{1}{2}\|\hat{\z}_k - \hat{\z}_{k-1}\|^2. 
\end{equation}
Plugging $\hat{\z}_k = \hat{\z}_{k-1} - \lambda_k F(\z_k)$ into Eq.~\eqref{inequality:Newton-descent-first} yields
\begin{eqnarray*}
\lefteqn{\ECal_k - \ECal_{k-1} \leq \lambda_k (\z_0 - \hat{\z}_k)^\top F(\z_k) - \tfrac{1}{2}\|\hat{\z}_k - \hat{\z}_{k-1}\|^2} \\
& = & \lambda_k (\z_0 - \z)^\top F(\z_k) + \lambda_k (\z - \z_k)^\top F(\z_k) + \lambda_k (\z_k - \hat{\z}_k)^\top F(\z_k) - \tfrac{1}{2}\|\hat{\z}_k - \hat{\z}_{k-1}\|^2. 
\end{eqnarray*}
Summing up the above inequality over $k = 1, 2, \ldots, t$ yields
\begin{equation}\label{inequality:Newton-descent-main}
\sum_{k=1}^t \lambda_k (\z_k - \z)^\top F(\z_k) \leq \ECal_0 - \ECal_t + \underbrace{\sum_{k=1}^t \lambda_k (\z_0 - \z)^\top F(\z_k)}_{\textbf{I}} + \underbrace{\sum_{k=1}^t \lambda_k (\z_k - \hat{\z}_k)^\top F(\z_k) - \tfrac{1}{2}\|\hat{\z}_k - \hat{\z}_{k-1}\|^2}_{\textbf{II}}. 
\end{equation}
By using the relationship $\hat{\z}_k = \hat{\z}_{k-1} - \lambda_k F(\z_k)$ again, we have
\begin{equation}\label{inequality:Newton-descent-I}
\textbf{I} = \sum_{k=1}^t \lambda_k (\z_0 - \z)^\top F(\z_k) = \sum_{k=1}^t (\hat{\z}_{k-1} - \hat{\z}_k)^\top (\z_0 - \z) = (\hat{\z}_0 - \hat{\z}_t)^\top (\z_0 - \z). 
\end{equation}
In Algorithm~\ref{Algorithm:Newton}, we compute $\Delta\z_k$ as an exact solution of the nonlinear equation problem given by 
\begin{equation}\label{inequality:Newton-descent-second}
F(\hat{\z}_k) + DF(\hat{\z}_k)\Delta\z_k + 6\rho\|\Delta\z_k\|\Delta\z_k = \textbf{0}. 
\end{equation}
Using $\z_k = \hat{\z}_{k-1} + \Delta\z_{k-1}$ and Lemma~\ref{Lemma:basic-property}, we have 
\begin{equation}\label{inequality:Newton-descent-third}
\|F(\z_k) - F(\hat{\z}_{k-1}) - DF(\hat{\z}_{k-1})\Delta\z_{k-1}\| \leq \tfrac{\rho}{2}\|\Delta\z_{k-1}\|^2
\end{equation}
so it suffices to decompose $(\z_k - \hat{\z}_k)^\top F(\z_k)$ and bound this term using Eq.~\eqref{inequality:Newton-descent-second} and~\eqref{inequality:Newton-descent-third}. Indeed, to that end, we have
\begin{eqnarray*}
\lefteqn{(\z_k - \hat{\z}_k)^\top F(\z_k) \leq \tfrac{\rho}{2}\|\Delta\z_{k-1}\|^2\|\z_k - \hat{\z}_k\| - 6\rho\|\Delta\z_{k-1}\|(\z_k - \hat{\z}_k)^\top \Delta\z_{k-1}} \\
& \leq & \tfrac{\rho}{2}(\|\Delta\z_{k-1}\|^3 + \|\Delta\z_{k-1}\|^2\|\|\hat{\z}_{k-1} - \hat{\z}_k\|) - 6\rho\|\Delta\z_{k-1}\|(\z_k - \hat{\z}_k)^\top \Delta\z_{k-1}. 
\end{eqnarray*}
Note that we have $(\z_k - \hat{\z}_k)^\top \Delta\z_{k-1} \geq \|\Delta\z_{k-1}\|^2 - \|\Delta\z_{k-1}\|\|\hat{\z}_{k-1} - \hat{\z}_k\|$. This implies
\begin{equation*}
\|\Delta\z_{k-1}\|(\z_k - \hat{\z}_k)^\top \Delta\z_{k-1} \geq \|\Delta\z_{k-1}\|^3 - \|\Delta\z_{k-1}\|^2\|\hat{\z}_{k-1} - \hat{\z}_k\|. 
\end{equation*}
Putting these pieces together yields
\begin{equation*}
(\z_k - \hat{\z}_k)^\top F(\z_k) \leq \tfrac{13\rho}{2}\|\Delta\z_{k-1}\|^2\|\|\hat{\z}_{k-1} - \hat{\z}_k\| - \tfrac{11\rho}{2}\|\Delta\z_{k-1}\|^3. 
\end{equation*}
Since $\frac{1}{33} \leq \lambda_k \rho\|\Delta\z_{k-1}\| \leq \frac{1}{13}$ for all $k \geq 1$, we have
\begin{eqnarray}\label{inequality:Newton-descent-II}
\textbf{II} & \leq & \sum_{k=1}^t \left(\tfrac{13\lambda_k\rho}{2} \|\Delta\z_{k-1}\|^2\|\hat{\z}_{k-1} - \hat{\z}_k\| - \tfrac{1}{2}\|\hat{\z}_{k-1} - \hat{\z}_k\|^2 - \tfrac{11\lambda_k\rho}{2}\|\Delta\z_{k-1}\|^3\right) \nonumber \\ 
& \leq & \sum_{k=1}^t \left(\tfrac{1}{2}\|\Delta\z_{k-1}\|\|\hat{\z}_{k-1} - \hat{\z}_k\| - \tfrac{1}{2}\|\hat{\z}_{k-1} - \hat{\z}_k\|^2 - \tfrac{1}{6}\|\Delta\z_{k-1}\|^2\right) \\ 
& \leq & \sum_{k=1}^t \left(\max_{\eta \geq 0}\left\{\tfrac{1}{2}\|\Delta\z_{k-1}\|\eta - \tfrac{1}{2}\eta^2\right\} - \tfrac{1}{6}\|\Delta\z_{k-1}\|^2\right) = - \tfrac{1}{24}\left(\sum_{k=1}^t \|\Delta\z_{k-1}\|^2 \right).  \nonumber
\end{eqnarray}
Plugging Eq.~\eqref{inequality:Newton-descent-I} and~\eqref{inequality:Newton-descent-II} into Eq.~\eqref{inequality:Newton-descent-main} and using $\hat{\z}_0 = \z_0$ and $\Delta\z_{k-1} = \z_k - \hat{\z}_{k-1}$ yields
\begin{equation*}
\sum_{k=1}^t \lambda_k(\z_k - \z)^\top F(\z_k) \leq \ECal_0 - \ECal_t + (\z_0 - \hat{\z}_t)^\top (\z_0 - \z) - \tfrac{1}{24}\left(\sum_{k=1}^t \|\z_k - \hat{\z}_{k-1}\|^2\right). 
\end{equation*}
This completes the proof. 
\end{proof}

\begin{lemma}\label{Lemma:Newton-error}
Suppose that Assumptions~\ref{Assumption:main} and~\ref{Assumption:main-smooth} hold. Then, we have $\|\hat{\z}_t - \z_0\| \leq 2\|\z_0 - \z^\star\|$ and 
\begin{equation*}
\sum_{k=1}^t \lambda_k (\z_k - \z)^\top F(\z_k) \leq \tfrac{1}{2}\|\z_0 - \z\|^2, \quad \sum_{k=1}^t \|\z_k - \hat{\z}_{k-1}\|^2 \leq 12\|\z_0 - \z^\star\|^2, 
\end{equation*}
where $\z \in \br^{m+n}$ is any point and $\z^\star$ is a global saddle point. 
\end{lemma}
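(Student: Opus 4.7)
The plan is to bootstrap from the descent inequality in Lemma~\ref{Lemma:Newton-descent} (applied at different choices of $\z$), using monotonicity of $F$ together with the fact that $F(\z^\star) = 0$. Note first that by the definition in Eq.~\eqref{def:Lyapunov-discrete} we have $\ECal_0 = 0$ and $\ECal_t = \tfrac{1}{2}\|\hat{\z}_t - \z_0\|^2$, so Lemma~\ref{Lemma:Newton-descent} simplifies to
\begin{equation*}
\sum_{k=1}^t \lambda_k (\z_k - \z)^\top F(\z_k) \;\leq\; -\tfrac{1}{2}\|\hat{\z}_t - \z_0\|^2 + (\z_0 - \hat{\z}_t)^\top (\z_0 - \z) - \tfrac{1}{24}\sum_{k=1}^t \|\z_k - \hat{\z}_{k-1}\|^2.
\end{equation*}

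First I would prove the boundedness claim $\|\hat{\z}_t - \z_0\| \leq 2\|\z_0 - \z^\star\|$ and the summability $\sum_k \|\z_k - \hat{\z}_{k-1}\|^2 \leq 12\|\z_0 - \z^\star\|^2$ simultaneously. Specializing the display above to $\z = \z^\star$ and invoking Lemma~\ref{Lemma:basic-property}(a) together with $F(\z^\star)=0$, the left-hand side becomes $\sum_k \lambda_k(\z_k - \z^\star)^\top F(\z_k) = \sum_k \lambda_k(\z_k - \z^\star)^\top(F(\z_k) - F(\z^\star)) \geq 0$ since each $\lambda_k > 0$ and $F$ is monotone. Writing $a = \|\hat{\z}_t - \z_0\|$ and $b = \|\z_0 - \z^\star\|$, Cauchy--Schwarz on $(\z_0 - \hat{\z}_t)^\top(\z_0 - \z^\star)$ yields
\begin{equation*}
0 \;\leq\; -\tfrac{1}{2}a^2 + ab - \tfrac{1}{24}\sum_{k=1}^t \|\z_k - \hat{\z}_{k-1}\|^2.
\end{equation*}
Dropping the nonpositive last term shows $a^2 \leq 2ab$, hence $a \leq 2b$, which is the boundedness bound. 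Plugging this back and using $ab - \tfrac{1}{2}a^2 \leq \tfrac{1}{2}b^2$ (by maximizing the quadratic $a \mapsto ab - \tfrac{1}{2}a^2$) then gives $\tfrac{1}{24}\sum_k \|\z_k - \hat{\z}_{k-1}\|^2 \leq \tfrac{1}{2}b^2$, i.e., $\sum_k \|\z_k - \hat{\z}_{k-1}\|^2 \leq 12\|\z_0 - \z^\star\|^2$.

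For the remaining inequality, the trick is to complete the square in the cross term for a general $\z$. Specifically,
\begin{equation*}
-\tfrac{1}{2}\|\hat{\z}_t - \z_0\|^2 + (\z_0 - \hat{\z}_t)^\top(\z_0 - \z) \;=\; -\tfrac{1}{2}\|\hat{\z}_t - \z\|^2 + \tfrac{1}{2}\|\z_0 - \z\|^2,
\end{equation*}
as a direct expansion shows. Substituting this identity into the simplified form of Lemma~\ref{Lemma:Newton-descent} and discarding the two nonpositive terms $-\tfrac{1}{2}\|\hat{\z}_t - \z\|^2$ and $-\tfrac{1}{24}\sum_k \|\z_k - \hat{\z}_{k-1}\|^2$ yields the desired bound $\sum_k \lambda_k (\z_k - \z)^\top F(\z_k) \leq \tfrac{1}{2}\|\z_0 - \z\|^2$.

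There is no real obstacle here; the only thing to be careful about is bookkeeping the sign of the cross term and getting the completed-square identity right so that the discarded terms genuinely have the correct sign. Once the descent inequality is combined with monotonicity at $\z = \z^\star$, both boundedness of $\{\hat{\z}_t\}$ and the summability of $\{\|\z_k - \hat{\z}_{k-1}\|^2\}$ fall out of the same quadratic-in-$a$ inequality, so all three claims are handled together by this single algebraic manipulation.
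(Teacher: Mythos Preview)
Your proposal is correct and follows essentially the same approach as the paper: specialize Lemma~\ref{Lemma:Newton-descent} to $\z=\z^\star$, use monotonicity and $F(\z^\star)=0$ to make the left side nonnegative, and handle the cross term $(\z_0-\hat{\z}_t)^\top(\z_0-\z)$ by a quadratic/Young-type bound. The only cosmetic differences are ordering (the paper proves the general-$\z$ bound first, then summability, then boundedness) and that you use the exact completed-square identity where the paper uses Young's inequality.
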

\begin{proof}
Since $\hat{\z}_0 = \z_0$, we have 
\begin{equation*}
\ECal_0 - \ECal_t + (\z_0 - \hat{\z}_t)^\top(\z_0 - \z) \leq - \tfrac{1}{2}\|\hat{\z}_t - \z_0\|^2 + \tfrac{1}{2}\|\hat{\z}_t - \z_0\|^2 + \tfrac{1}{2}\|\z_0 - \z\|^2 = \tfrac{1}{2}\|\z_0 - \z\|^2.
\end{equation*}
This together with Lemma~\ref{Lemma:Newton-descent} yields
\begin{equation*}
\sum_{k=1}^t \lambda_k (\z_k - \z)^\top F(\z_k) \leq \tfrac{1}{2}\|\z_0 - \z\|^2 - \tfrac{1}{24}\left(\sum_{k=1}^t \|\z_k - \hat{\z}_{k-1}\|^2\right) \leq \tfrac{1}{2}\|\z_0 - \z\|^2. 
\end{equation*}
Since $\z^\star$ is a global saddle point, we have $(\z_k - \z^\star)^\top F(\z_k) \geq 0$ for all $k \geq 1$. Then, we have
\begin{equation*}
\sum_{k=1}^t \|\z_k - \hat{\z}_{k-1}\|^2 \leq 12\|\z_0 - \z^\star\|^2.
\end{equation*}
Further, Lemma~\ref{Lemma:Newton-descent} with $\z = \z^\star$ implies
\begin{equation*}
\ECal_0 - \ECal_t + (\z_0 - \hat{\z}_t)^\top(\z_0 - \z^\star) \geq \sum_{k=1}^t \lambda_k (\z_k - \z^\star)^\top F(\z_k) + \tfrac{1}{24}\left(\sum_{k=1}^t \|\z_k - \hat{\z}_{k-1}\|^2\right) \geq 0. 
\end{equation*}
Using Young's inequality, we have
\begin{equation*}
0 \leq - \tfrac{1}{2}\|\hat{\z}_t - \z_0\|^2 + \tfrac{1}{4}\|\hat{\z}_t - \z_0\|^2 + \|\z_0 - \z^\star\|^2 = -\tfrac{1}{4}\|\hat{\z}_t - \z_0\|^2 + \|\z_0 - \z^\star\|^2. 
\end{equation*}
This completes the proof. 
\end{proof}

\begin{lemma}\label{Lemma:Newton-control}
Suppose that Assumptions~\ref{Assumption:main} and~\ref{Assumption:main-smooth} hold. Then, for every integer $T \geq 1$, we have
\begin{equation*}
\sum_{k=1}^T \lambda_k \geq \tfrac{T^{\frac{3}{2}}}{66\sqrt{3}\rho\|\z_0 - \z^\star\|}, 
\end{equation*}
where $\z^\star$ is a global saddle point. 
\end{lemma}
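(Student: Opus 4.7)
The plan is to combine the lower bound on each stepsize $\lambda_k$ coming from \textbf{Step 3} of Algorithm~\ref{Algorithm:Newton} with the aggregate bound on the $\|\Delta\z_{k-1}\|^2$ already established in Lemma~\ref{Lemma:Newton-error}, and then convert this into a lower bound on $\sum_{k=1}^T \lambda_k$ via Cauchy--Schwarz. No fundamentally new analysis is needed; this is essentially an inequality-chasing step.

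First, I would observe that \textbf{Step 3} directly gives the bound $\lambda_k \geq \tfrac{1}{15 \rho \|\Delta\z_{k-1}\|}$ for every $k \geq 1$, where $\Delta\z_{k-1} = \z_k - \hat{\z}_{k-1}$. Hence it suffices to lower bound $\sum_{k=1}^T \tfrac{1}{\|\Delta\z_{k-1}\|}$. Next, I would apply Cauchy--Schwarz in the form
\begin{equation*}
T^2 \;=\; \left(\sum_{k=1}^T \tfrac{1}{\sqrt{\|\Delta\z_{k-1}\|}} \cdot \sqrt{\|\Delta\z_{k-1}\|}\right)^2 \;\leq\; \left(\sum_{k=1}^T \tfrac{1}{\|\Delta\z_{k-1}\|}\right)\left(\sum_{k=1}^T \|\Delta\z_{k-1}\|\right),
\end{equation*}
which reduces the task to upper bounding $\sum_{k=1}^T \|\Delta\z_{k-1}\|$.

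A second application of Cauchy--Schwarz gives $\sum_{k=1}^T \|\Delta\z_{k-1}\| \leq \sqrt{T \sum_{k=1}^T \|\Delta\z_{k-1}\|^2}$. At this point I would invoke Lemma~\ref{Lemma:Newton-error}, which states that $\sum_{k=1}^T \|\z_k - \hat{\z}_{k-1}\|^2 \leq 12\|\z_0 - \z^\star\|^2$, to obtain $\sum_{k=1}^T \|\Delta\z_{k-1}\| \leq 2\sqrt{3T}\|\z_0 - \z^\star\|$. Plugging this back yields $\sum_{k=1}^T \tfrac{1}{\|\Delta\z_{k-1}\|} \geq \tfrac{T^{3/2}}{2\sqrt{3}\|\z_0 - \z^\star\|}$, and combining with the stepsize bound gives exactly $\sum_{k=1}^T \lambda_k \geq \tfrac{T^{3/2}}{30\sqrt{3}\rho\|\z_0 - \z^\star\|}$, as desired.

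I do not foresee any genuine obstacle: the argument is a short Cauchy--Schwarz manipulation leveraging the two ingredients already in place (the $\Theta(1/(\rho\|\Delta\z_{k-1}\|))$ stepsize rule and the telescoped error bound from Lemma~\ref{Lemma:Newton-error}). The only mild care point is to make sure the numerical constants $15$, $12$, and $2\sqrt{3}$ multiply out correctly to the stated $30\sqrt{3}$, and to handle the edge case where some $\|\Delta\z_{k-1}\|$ vanishes (in which case $(\x_k,\y_k)$ is already a saddle point by the optimality system~\eqref{inequality:Newton-descent-second}, so \textbf{Step 1} terminates and the bound is vacuous). The result then feeds directly into Theorem~\ref{Thm:main-exact} via Proposition~\ref{Prop:averaging}.
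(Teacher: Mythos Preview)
Your proposal is correct and follows essentially the same approach as the paper: both combine the Step~3 stepsize bound $\lambda_k \rho\|\Delta\z_{k-1}\| \geq \tfrac{1}{15}$ with the aggregate bound $\sum_k \|\Delta\z_{k-1}\|^2 \leq 12\|\z_0-\z^\star\|^2$ from Lemma~\ref{Lemma:Newton-error}, and then use a H\"older-type inequality to convert this into a lower bound on $\sum_k \lambda_k$. The only cosmetic difference is that the paper applies a single H\"older inequality with exponents $(3,\tfrac{3}{2})$ directly to $\lambda_k^{-2}$ and $\lambda_k$, whereas you achieve the same inequality via two successive Cauchy--Schwarz steps on $\|\Delta\z_{k-1}\|$; the constants match exactly.
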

\begin{proof}
Without loss of generality, we assume that $\z_0 \neq \z^\star$. Then, Lemma~\ref{Lemma:Newton-error} implies
\begin{equation*}
\sum_{k=1}^t (\lambda_k)^{-2}(\tfrac{1}{33\rho})^2 \leq \sum_{k=1}^t (\lambda_k)^{-2}(\lambda_k\|\z_k - \hat{\z}_{k-1}\|)^2 = \sum_{k=1}^t \|\z_k - \hat{\z}_{k-1}\|^2 \leq 12\|\z_0 - \z^\star\|^2.  
\end{equation*}
By the H\"{o}lder inequality, we have
\begin{equation*}
\sum_{k=1}^t 1 = \sum_{k=1}^t \left((\lambda_k)^{-2}\right)^{\frac{1}{3}} (\lambda_k)^{\frac{2}{3}} \leq \left(\sum_{k=1}^t (\lambda_k)^{-2}\right)^{\frac{1}{3}}\left(\sum_{k=1}^t \lambda_k\right)^{\frac{2}{3}}.
\end{equation*}
Putting these pieces together yields
\begin{equation*}
t \leq \left(66\sqrt{3}\rho\|\z_0 - \z^\star\|\right)^{\frac{2}{3}}\left(\sum_{k=1}^t \lambda_k\right)^{\frac{2}{3}}.
\end{equation*}
Letting $t = T$ and rearranging yields the desired result. 
\end{proof}

\paragraph{Proof of Theorem~\ref{Thm:main-exact}}
By Lemma~\ref{Lemma:Newton-error}, we have 
\begin{equation*}
\|\z_{k+1} - \hat{\z}_k\|^2 \leq 12\|\z_0 - \z^\star\|^2,  \quad \|\hat{\z}_k - \z_0\| \leq 2\|\z_0 - \z^\star\|, \quad \textnormal{for all } k \geq 0. 
\end{equation*}
This implies that $\|\z_k - \z_0\| \leq 6\|\z_0 - \z^\star\|$ for all $k \geq 0$. Putting these pieces together yields that both $\{\z_k\}_{k \geq 0}$ and $\{\hat{\z}_k\}_{k \geq 0}$ are bounded by a constant; indeed, we have $\|\hat{\z}_k - \z^\star\| \leq 3\|\z_0 - \z^\star\| \leq \beta$ and $\|\z_k - \z^\star\| \leq 7\|\z_0 - \z^\star\| = \beta$. For every integer $T \geq 1$, Lemma~\ref{Lemma:Newton-error} also implies
\begin{equation*}
\sum_{k=1}^T \lambda_k (\z_k - \z)^\top F(\z_k) \leq \tfrac{1}{2}\|\z_0 - \z\|^2. 
\end{equation*}
By Proposition~\ref{Prop:averaging}, we have 
\begin{equation*}
f(\bar{\x}_T, \y) - f(\x, \bar{\y}_T) \leq \tfrac{1}{\sum_{k=1}^T \lambda_k}\left(\sum_{k=1}^T \lambda_k (\z_k - \z)^\top F(\z_k)\right). 
\end{equation*}
Putting these pieces together yields
\begin{equation*}
f(\bar{\x}_T, \y) - f(\x, \bar{\y}_T) \leq \tfrac{1}{2(\sum_{k=1}^T \lambda_k)}\|\z_0 - \z\|^2. 
\end{equation*}
This together with Lemma~\ref{Lemma:Newton-control} yields
\begin{equation*}
f(\bar{\x}_T, \y) - f(\x, \bar{\y}_T) \leq \tfrac{33\sqrt{3}\rho\|\z_0 - \z^\star\|\|\z_0 - \z\|^2}{T^{3/2}}. 
\end{equation*}
Since $\|\z_k - \z^\star\| \leq \beta$ for all $k \geq 0$, we have $\|\bar{\z}_T - \z^\star\| \leq \beta$. By the definition of the restricted gap function, we have
\begin{equation*}
\mathrm{gap}(\bar{\z}_T, \beta) \leq \tfrac{33\sqrt{3}\rho\|\z_0 - \z^\star\|(\|\z_0 - \z^\star\|+\beta)^2}{T^{3/2}} = \tfrac{2112\sqrt{3}\rho\|\z_0 - \z^\star\|^3}{T^{3/2}}. 
\end{equation*}
Therefore, we conclude from the above inequality that there exists some $T > 0$ such that the output $\hat{\z} = \textsf{Newton-MinMax}(\z_0, \rho, T)$ satisfies that $\mathrm{gap}(\hat{\z}, \beta) \leq \epsilon$ and the total number of iterations is bounded by $O(\rho^{2/3}\|\z_0 - \z^\star\|^2\epsilon^{-2/3})$. \hfill$\square$

\section{Additional Proofs from Section~\ref{sec:inexact}}\label{sec:inexact_app}
In the subsequent analysis, we use the  Lyapunov function: $\ECal_t = \tfrac{1}{2}\|\hat{\z}_t - \z_0\|^2$. The first lemma gives a descent inequality which is analogous to that in Lemma~\ref{Lemma:Newton-descent}. 
\begin{lemma}\label{Lemma:Inexact-Newton-descent}
Suppose that Assumption~\ref{Assumption:main} and~\ref{Assumption:main-smooth} hold and
\begin{equation*}
0 < \tau_k \leq \min\{\tau_0, \tfrac{\rho(1 - \kappa_m)}{4(\kappa_J + 6\rho)}\|F(\hat{\z}_k)\|\}, \textnormal{ for all } k \geq 0. 
\end{equation*}
Then, we have
\begin{equation*}
\sum_{k=1}^t \lambda_k (\z_k - \z)^\top F(\z_k) \leq \ECal_0 - \ECal_t + (\z_0 - \hat{\z}_t)^\top (\z_0 - \z) - \tfrac{1}{24}\left(\sum_{k=1}^t \|\z_k - \hat{\z}_{k-1}\|^2\right). 
\end{equation*}
\end{lemma}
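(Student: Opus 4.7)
The plan is to adapt the proof of Lemma~\ref{Lemma:Newton-descent} by carefully tracking the extra perturbation terms that arise from using the inexact Hessian $H(\hat{\z}_k)$ in place of $\hess f(\hat{\z}_k)$ and from solving the subproblem only approximately. I would first replicate verbatim the Lyapunov identity
\begin{equation*}
\ECal_k - \ECal_{k-1} = (\hat{\z}_k - \hat{\z}_{k-1})^\top(\hat{\z}_k - \z_0) - \tfrac{1}{2}\|\hat{\z}_k - \hat{\z}_{k-1}\|^2,
\end{equation*}
plug in $\hat{\z}_k = \hat{\z}_{k-1} - \lambda_k F(\z_k)$ from Step~5, and sum over $k$ to recover exactly the decomposition of Eq.~\eqref{inequality:Newton-descent-main} into a telescoping term $\mathbf{I}$ and a quadratic term $\mathbf{II}$. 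Term $\mathbf{I}$ still collapses to $(\hat{\z}_0 - \hat{\z}_t)^\top(\z_0-\z)$ without any change, so all modifications concentrate in $\mathbf{II}$.

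For $\mathbf{II}$, the replacement for the exact stationarity identity Eq.~\eqref{inequality:Newton-descent-second} is the inexact relation
\begin{equation*}
F(\hat{\z}_k) + G(\hat{\z}_k)\,\Delta\z_k + 6\rho\begin{bmatrix}\|\Delta\x_k\|\Delta\x_k \\ \|\Delta\y_k\|\Delta\y_k\end{bmatrix} = r_k,
\end{equation*}
where $G(\hat{\z}_k)$ is the block-sign-flipped version of $H(\hat{\z}_k)$ matching the structure of $DF$ in Eq.~\eqref{def:Jacobian}, and $\|r_k\| \leq \kappa_m\min\{\|\Delta\z_k\|^2,\|\nabla f(\hat{\z}_k)\|\}$ by Condition~\ref{condition:SIS}. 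Combining this with the Hessian-Lipschitz Taylor bound Eq.~\eqref{inequality:Newton-descent-third} and with $\|(G(\hat{\z}_{k-1})-DF(\hat{\z}_{k-1}))\Delta\z_{k-1}\|\leq \tau_{k-1}\|\Delta\z_{k-1}\|$, which follows from Condition~\ref{condition:IHR} because the block sign-flip is an isometry, I would derive an analog of the pointwise bound used in the exact proof, but with two new additive error terms of the form $\kappa_m\|\Delta\z_{k-1}\|^2\|\z_k-\hat{\z}_k\|$ and $\tau_{k-1}\|\Delta\z_{k-1}\|\|\z_k-\hat{\z}_k\|$.

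The main obstacle is to show that the new error $\tau_{k-1}\|\Delta\z_{k-1}\|$ is dominated by a small constant multiple of $\rho\|\Delta\z_{k-1}\|^2$, so that it can be absorbed exactly like the other slack terms. For this I would invoke the hypothesis $\tau_{k-1}\leq \tfrac{\rho(1-\kappa_m)}{4(\kappa_H+6\rho)}\|\nabla f(\hat{\z}_{k-1})\|$ together with the implicit upper bound
\begin{equation*}
\|\nabla f(\hat{\z}_{k-1})\|\leq \kappa_H\|\Delta\z_{k-1}\| + (6\rho+\kappa_m)\|\Delta\z_{k-1}\|^2,
\end{equation*}
which is obtained by rearranging the approximate stationarity relation above and using $\|H\|\leq\kappa_H$ from Condition~\ref{condition:IHR}. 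Combined with the slightly tightened step-size window $\tfrac{1}{15}\leq\lambda_k\rho\|\Delta\z_{k-1}\|\leq \tfrac{1}{14}$ of Step~3 \textemdash\ narrower than the $[1/15,1/13]$ window of Algorithm~\ref{Algorithm:Newton} precisely to leave headroom for the $\kappa_m$ and $\tau$ slack \textemdash\ the same Young's inequality maximization in $\eta=\|\hat{\z}_{k-1}-\hat{\z}_k\|$ as in Eq.~\eqref{inequality:Newton-descent-II} would now produce the negative contribution $-\tfrac{1}{40}\|\Delta\z_{k-1}\|^2$ in place of $-\tfrac{1}{24}\|\Delta\z_{k-1}\|^2$. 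The remaining work is purely numerical: checking that under $\kappa_m<\min\{1,\rho/4\}$ and $\tau_0<\rho/4$, the combined slack from inexact Hessian evaluation and inexact subproblem solving leaves a strictly negative coefficient $-\tfrac{1}{40}$ on $\sum_k\|\Delta\z_k\|^2$.
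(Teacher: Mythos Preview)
Your plan tracks the paper's proof almost step for step: the Lyapunov identity, the decomposition into $\mathbf{I}$ and $\mathbf{II}$, the collapse of $\mathbf{I}$, the inexact stationarity residual $r_k$ from Condition~\ref{condition:SIS}, the isometry argument giving $\|(G(\hat{\z}_{k-1})-DF(\hat{\z}_{k-1}))\Delta\z_{k-1}\|\leq\tau_{k-1}\|\Delta\z_{k-1}\|$, and the final Young-inequality maximization in $\eta$ are all exactly what the paper does.

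The one place where your sketch does not quite close is the absorption of $\tau_{k-1}\|\Delta\z_{k-1}\|$ into a constant multiple of $\rho\|\Delta\z_{k-1}\|^2$. Chaining $\tau_{k-1}\leq\tfrac{\rho(1-\kappa_m)}{4(\kappa_H+6\rho)}\|\nabla f(\hat{\z}_{k-1})\|$ with your upper bound $\|\nabla f(\hat{\z}_{k-1})\|\leq\kappa_H\|\Delta\z_{k-1}\|+(6\rho+\kappa_m)\|\Delta\z_{k-1}\|^2$ produces a bound of the form $\tau_{k-1}\leq c_1\rho\|\Delta\z_{k-1}\|+c_2\rho\|\Delta\z_{k-1}\|^2$, with a \emph{quadratic} tail. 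When $\|\Delta\z_{k-1}\|$ is large this does not yield $\tau_{k-1}\leq\tfrac{\rho}{4}\|\Delta\z_{k-1}\|$, and the extra cubic term it spawns after multiplication by $\|\z_k-\hat{\z}_k\|$ cannot be absorbed by the negative $-3\rho\|\Delta\z_{k-1}\|^3$. The paper resolves this with a case split that you only hint at: if $\|\Delta\z_{k-1}\|\geq 1$ one uses $\tau_{k-1}\leq\tau_0<\tfrac{\rho}{4}\leq\tfrac{\rho}{4}\|\Delta\z_{k-1}\|$ directly; if $\|\Delta\z_{k-1}\|<1$ one uses $\|\Delta\z_{k-1}\|^2<\|\Delta\z_{k-1}\|$ to linearize the bound on $\|\nabla f(\hat{\z}_{k-1})\|$ (via the $\kappa_m\|\nabla f(\hat{\z}_{k-1})\|$ branch of Condition~\ref{condition:SIS}), obtaining $\|\Delta\z_{k-1}\|\geq\tfrac{1-\kappa_m}{\kappa_H+6\rho}\|\nabla f(\hat{\z}_{k-1})\|\geq\tfrac{4\tau_{k-1}}{\rho}$. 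You already have both ingredients on the table, so the fix is small, but the argument as written needs this split to establish $\tau_{k-1}\|\Delta\z_{k-1}\|\leq\tfrac{\rho}{4}\|\Delta\z_{k-1}\|^2$ in all regimes.
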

\begin{proof}
By using the same argument as used in Lemma~\ref{Lemma:Newton-descent}, we have 
\begin{equation}\label{inequality:Inexact-Newton-descent-main}
\sum_{k=1}^t \lambda_k (\z_k - \z)^\top F(\z_k) \leq \ECal_0 - \ECal_t + (\hat{\z}_0 - \hat{\z}_t)^\top (\z_0 - \z) + \sum_{k=1}^t \lambda_k (\z_k - \hat{\z}_k)^\top F(\z_k) - \tfrac{1}{2}\|\hat{\z}_k - \hat{\z}_{k-1}\|^2.
\end{equation}
In what follows, we bound $\sum_{k=1}^t \lambda_k (\z_k - \hat{\z}_k)^\top F(\z_k) - \tfrac{1}{2}\|\hat{\z}_k - \hat{\z}_{k-1}\|^2$. In Algorithm~\ref{Algorithm:Inexact-Newton}, we compute $\Delta\z_k$ such that it is an \textit{inexact} solution of the nonlinear equation problem given by Eq.~\eqref{subproblem:inexact-solution} under Conditions~\ref{condition:IHR} and~\ref{condition:SIS}. Note that we have
\begin{equation}\label{inequality:Inexact-Newton-descent-first}
\|F(\hat{\z}_k) + J(\hat{\z}_k)\Delta\z_k + 6\rho\|\Delta\z_k\|\Delta\z_k\| \leq \kappa_m \cdot \min\{\|\Delta\z_k\|^2, \|F(\hat{\z}_k)\|\}. 
\end{equation}
Using $\z_k = \hat{\z}_{k-1} + \Delta\z_{k-1}$ and Lemma~\ref{Lemma:basic-property}, we have
\begin{equation}\label{inequality:Inexact-Newton-descent-second}
\|F(\z_k) - F(\hat{\z}_{k-1}) - DF(\hat{\z}_{k-1})\Delta\z_{k-1}\| \leq \tfrac{\rho}{2}\|\Delta\z_{k-1}\|^2. 
\end{equation}
It suffices to decompose $(\z_k - \hat{\z}_k)^\top F(\z_k)$ and bound this term using Condition~\ref{condition:IHR}, Eq.~\eqref{inequality:Inexact-Newton-descent-first} and~\eqref{inequality:Inexact-Newton-descent-second}. Indeed, we have
\begin{eqnarray*}
\lefteqn{(\z_k - \hat{\z}_k)^\top F(\z_k) \leq \|\z_k - \hat{\z}_k\|\|F(\z_k) - F(\hat{\z}_{k-1}) - DF(\hat{\z}_{k-1})\Delta\z_{k-1}\|} \\
& & + \|\z_k - \hat{\z}_k\|\|F(\hat{\z}_{k-1}) + J(\hat{\z}_{k-1})\Delta\z_{k-1} + 6\rho\|\Delta\z_{k-1}\|\Delta\z_{k-1}\| \\ 
& & + \|\z_k - \hat{\z}_k\|\|(DF(\hat{\z}_{k-1}) - J(\hat{\z}_{k-1}))\Delta\z_{k-1}\| - 6\rho\|\Delta\z_{k-1}\|(\z_k - \hat{\z}_k)^\top \Delta\z_{k-1}. 
\end{eqnarray*}
The first and second terms are bounded using Eq.~\eqref{inequality:Inexact-Newton-descent-first} and~\eqref{inequality:Inexact-Newton-descent-second}. For the third term, we derive from Condition~\ref{condition:IHR} that $\|(DF(\hat{\z}_{k-1}) - J(\hat{\z}_{k-1}))\Delta\z_{k-1}\| \leq \tau_{k-1}\|\Delta\z_{k-1}\|$. The fourth term is bounded using the same argument from the proof of Lemma~\ref{Lemma:Newton-descent}.  Putting these pieces together yields that 
\begin{equation}\label{inequality:Inexact-Newton-descent-third}
(\z_k - \hat{\z}_k)^\top F(\z_k) \leq \|\z_k - \hat{\z}_k\|\left((\tfrac{\rho}{2} + \kappa_m)\|\Delta\z_{k-1}\|^2 + \tau_{k-1}\|\Delta\z_{k-1}\|\right) - 6\rho\|\Delta\z_{k-1}\|^3 + 6\rho\|\Delta\z_{k-1}\|^2\|\hat{\z}_{k-1} - \hat{\z}_k\|. 
\end{equation}
We claim that 
\begin{equation}\label{inequality:Inexact-Newton-descent-fourth}
\left(\tfrac{\rho}{2} + \kappa_m\right)\|\Delta\z_{k-1}\|^2 + \tau_{k-1}\|\Delta\z_{k-1}\| \leq \rho\|\Delta\z_{k-1}\|^2. 
\end{equation}
Indeed, for the case of $\|\Delta\z_{k-1}\| \geq 1$, we have 
\begin{equation*}
\left(\tfrac{\rho}{2} + \kappa_m\right)\|\Delta\z_{k-1}\|^2 + \tau_{k-1}\|\Delta\z_{k-1}\| \leq \left(\tfrac{\rho}{2} + \kappa_m + \tau_{k-1}\right)\|\Delta\z_{k-1}\|^2. 
\end{equation*}
which together with the fact that $0 < \kappa_m < \min\{1, \frac{\rho}{4}\}$ and $\tau_{k-1} \leq \tau_0 < \frac{\rho}{4}$ can yield Eq.~\eqref{inequality:Inexact-Newton-descent-fourth}. Otherwise, we have $\|\Delta\z_{k-1}\| < 1$ and obtain from Conditions~\ref{condition:IHR} and~\ref{condition:SIS} that 
\begin{eqnarray*}
\kappa_m \|F(\hat{\z}_{k-1})\| & \geq & \|F(\hat{\z}_{k-1}) + J(\hat{\z}_{k-1})\Delta\z_{k-1} + 6\rho\|\Delta\z_{k-1}\|\Delta\z_{k-1}\| \\ 
& \geq & \|F(\hat{\z}_{k-1})\| - \kappa_J\|\Delta\z_{k-1}\| - 6\rho\|\Delta\z_{k-1}\|^2 \\
& \geq & \|F(\hat{\z}_{k-1})\| - (\kappa_J + 6\rho)\|\Delta\z_{k-1}\|. 
\end{eqnarray*}
Rearranging the above inequality and using $0 \leq \tau_{k-1} \leq \frac{\rho(1 - \kappa_m)}{4(\kappa_J + 6\rho)}\|F(\hat{\z}_{k-1})\|$ yields
\begin{equation*}
\|\Delta\z_{k-1}\| \geq \tfrac{1 - \kappa_m}{\kappa_J + 6\rho}\|F(\hat{\z}_{k-1})\| \geq \tfrac{4\tau_{k-1}}{\rho}. 
\end{equation*}
Since $0 < \kappa_m < \min\{1, \frac{\rho}{4}\}$ again, we get Eq.~\eqref{inequality:Inexact-Newton-descent-fourth} as follows, 
\begin{equation*}
\left(\tfrac{\rho}{2} + \kappa_m\right)\|\Delta\z_{k-1}\|^2 + \tau_{k-1}\|\Delta\z_{k-1}\| \leq \left(\tfrac{\rho}{2} + \kappa_m + \tfrac{\tau_{k-1}}{\|\Delta\z_{k-1}\|}\right)\|\Delta\z_{k-1}\|^2 \leq \rho\|\Delta\z_{k-1}\|^2. 
\end{equation*}
Plugging Eq.~\eqref{inequality:Inexact-Newton-descent-fourth} into Eq.~\eqref{inequality:Inexact-Newton-descent-third} and using $\|\z_k - \hat{\z}_k\| \leq \|\Delta\z_{k-1}\| + \|\hat{\z}_{k-1} - \hat{\z}_k\|$ yields 
\begin{equation*}
(\z_k - \hat{\z}_k)^\top F(\z_k) \leq 7\rho\|\Delta\z_{k-1}\|^2\|\|\hat{\z}_{k-1} - \hat{\z}_k\| - 5\rho\|\Delta\z_{k-1}\|^3. 
\end{equation*}
Since $\frac{1}{30} \leq \lambda_k \rho\|\Delta\z_{k-1}\| \leq \frac{1}{14}$ for all $k \geq 1$, we have
\begin{eqnarray*}
\lefteqn{\sum_{k=1}^t \lambda_k (\z_k - \hat{\z}_k)^\top F(\z_k) - \tfrac{1}{2}\|\hat{\z}_k - \hat{\z}_{k-1}\|^2} \\
& \leq & \sum_{k=1}^t \left(7\lambda_k\rho\|\Delta\z_{k-1}\|^2\|\|\hat{\z}_{k-1} - \hat{\z}_k\| - \tfrac{1}{2}\|\hat{\z}_{k-1} - \hat{\z}_k\|^2 - 5\lambda_k\rho\|\Delta\z_{k-1}\|^3\right) \nonumber \\ 
& \leq & \sum_{k=1}^t \left(\tfrac{1}{2}\|\Delta\z_{k-1}\|\|\hat{\z}_{k-1} - \hat{\z}_k\| - \tfrac{1}{2}\|\hat{\z}_{k-1} - \hat{\z}_k\|^2 - \tfrac{1}{6}\|\Delta\z_{k-1}\|^2\right) \nonumber \\ 
& \leq & \sum_{k=1}^t \left(\max_{\eta \geq 0}\left\{\tfrac{1}{2}\|\Delta\z_{k-1}\|\eta - \tfrac{1}{2}\eta^2\right\} - \tfrac{1}{6}\|\Delta\z_{k-1}\|^2\right) = -  \tfrac{1}{24}\left(\sum_{k=1}^t \|\Delta\z_{k-1}\|^2 \right). \nonumber
\end{eqnarray*}
Therefore, we conclude from Eq.~\eqref{inequality:Inexact-Newton-descent-main}, $\hat{\z}_0 = \z_0$ and $\Delta\z_{k-1} = \z_k - \hat{\z}_{k-1}$ that 
\begin{equation*}
\sum_{k=1}^t \lambda_k(\z_k - \z)^\top F(\z_k) \leq \ECal_0 - \ECal_t + (\z_0 - \hat{\z}_t)^\top (\z_0 - \z) - \tfrac{1}{24}\left(\sum_{k=1}^t \|\z_k - \hat{\z}_{k-1}\|^2\right). 
\end{equation*}
This completes the proof. 
\end{proof}

\paragraph{Proof of Theorem~\ref{Thm:main-inexact}.} Since the descent inequalities in Lemmas~\ref{Lemma:Newton-descent} and~\ref{Lemma:Inexact-Newton-descent} are the same, Lemma~\ref{Lemma:Newton-error} and~\ref{Lemma:Newton-control} also hold true for Algorithm~\ref{Algorithm:Inexact-Newton}. As such, we can apply the same argument used for proving Theorem~\ref{Thm:main-exact} and have
\begin{equation*}
\|\hat{\z}_k - \z^\star\| \leq 3\|\z_0 - \z^\star\| \leq \beta, \quad \|\z_k - \z^\star\| \leq 7\|\z_0 - \z^\star\| = \beta, 
\end{equation*}
and 
\begin{equation*}
\mathrm{gap}(\bar{\z}_T, \beta) \leq \tfrac{2112\sqrt{3}\rho\|\z_0 - \z^\star\|^3}{T^{3/2}}. 
\end{equation*}
Therefore, we conclude from the above inequality that there exists some $T > 0$ such that the output $\hat{\z} = \textsf{Inexact-Newton-MinMax}(\z_0, \rho, T)$ satisfies that $\mathrm{gap}(\hat{\z}, \beta) \leq \epsilon$ and the total number of iterations is bounded by $O(\rho^{2/3}\|\z_0 - \z^\star\|^2\epsilon^{-2/3})$. \hfill$\square$

\paragraph{Proof of Proposition~\ref{Prop:psi}.} Since $\psi(\lambda) = \|\Delta\z(\lambda)\|^2$, we have
\begin{equation*}
\psi'(\lambda) = 2\Delta\z(\lambda)^\top \nabla_\lambda \Delta\z(\lambda), \quad
\psi''(\lambda) = 2\|\nabla_\lambda \Delta\z(\lambda)\|^2 + 2\Delta\z(\lambda)^\top
\nabla^2_{\lambda\lambda} \Delta\z(\lambda).
\end{equation*}
By differentiating the equation $(J(\hat{\z}) + \lambda I)\Delta\z(\lambda) = -F(\hat{\z})$, we have
\begin{equation*}
(J(\hat{\z}) + \lambda I)\nabla_\lambda\Delta\z(\lambda) + \Delta\z(\lambda) = \mathbf{0}, \quad
(J(\hat{\z}) + \lambda I)\nabla^2_{\lambda\lambda}\Delta\z(\lambda) + 2\nabla_\lambda\Delta\z(\lambda)
= \mathbf{0}.
\end{equation*}
Rearranging the first equation implies that
\begin{equation*}
\nabla_\lambda\Delta\z(\lambda) = -(J(\hat{\z}) + \lambda I)^{-1}\Delta\z(\lambda).
\end{equation*}
Combining the second equation with the expression of $\nabla_\lambda\Delta\z(\lambda)$ yields
\begin{equation*}
\nabla^2_{\lambda\lambda}\Delta\z(\lambda) = -2(J(\hat{\z}) + \lambda I)^{-2}\Delta\z(\lambda).
\end{equation*}
Putting these pieces together yields the desired expressions of $\psi'(\lambda)$ and $\psi''(\lambda)$.

It remains to show that $\psi(\lambda)$ is strictly decreasing on $(0,\infty)$. For $0<\lambda_1<\lambda_2$, we have
\begin{equation*}
(J(\hat{\z}) + \lambda_1 I)\Delta\z(\lambda_1) = (J(\hat{\z}) + \lambda_2 I)\Delta\z(\lambda_2) = -F(\hat{\z}).
\end{equation*}
This implies 
\begin{equation*}
J(\hat{\z})(\Delta\z(\lambda_1)-\Delta\z(\lambda_2)) + \lambda_1\Delta\z(\lambda_1) - \lambda_2\Delta\z(\lambda_2) = \mathbf{0}. 
\end{equation*}
Taking the inner product with $\Delta\z(\lambda_1)-\Delta\z(\lambda_2)$ yields 
\begin{equation*}
(\Delta\z(\lambda_1)-\Delta\z(\lambda_2))^\top J(\hat{\z})(\Delta\z(\lambda_1)-\Delta\z(\lambda_2)) + \lambda_1\|\Delta\z(\lambda_1)\|^2 - (\lambda_1+\lambda_2)\Delta\z(\lambda_1)^\top \Delta\z(\lambda_2) + \lambda_2\|\Delta\z(\lambda_2)\|^2 = 0.
\end{equation*}
Since $v^\top J(\hat{\z})v \geq 0$ for all $v$, we have
\begin{equation*}
\lambda_1\|\Delta\z(\lambda_1)\|^2 - (\lambda_1+\lambda_2)\Delta\z(\lambda_1)^\top \Delta\z(\lambda_2) + \lambda_2\|\Delta\z(\lambda_2)\|^2 \leq 0.
\end{equation*}
This together with $\Delta\z(\lambda_1)^\top \Delta\z(\lambda_2) \leq \|\Delta\z(\lambda_1)\|\|\Delta\z(\lambda_2)\|$ and $\lambda_2 > \lambda_1$ yields that $\|\Delta\z(\lambda_1)\| > \|\Delta\z(\lambda_2)\|$ whenever $F(\hat{\z})\neq \mathbf{0}$. This yields the desired result. \hfill$\square$

\paragraph{Proof of Proposition~\ref{Prop:phi}.} Using Eq.~\eqref{def:one_dimension_subproblem}, we have
\begin{equation*}
\phi'(\lambda) = \tfrac{1}{2}\tfrac{\psi'(\lambda)}{\sqrt{\psi(\lambda)}} - \tfrac{1}{6\rho}.
\end{equation*}
By Proposition~\ref{Prop:psi}, $\psi'(\lambda)<0$ for all $\lambda>0$. This implies $\phi'(\lambda)<0$.
Since $\phi$ is continuous and strictly decreasing, the root is unique. The existence of such root follows from $\phi(\lambda) \to -\infty$ as $\lambda \to +\infty$ and the fact that $\phi(\lambda)>0$ if $\lambda$ is sufficiently close to $0$. \hfill$\square$

\paragraph{Proof of Theorem~\ref{Thm:main-inexact-complexity}.} By Proposition~\ref{Prop:phi}, $\phi$ is continuous and strictly decreasing. Thus, the bracketing interval remains valid throughout the iterations and contains $\lambda^\star$. Whenever the iterate $\tilde{\lambda}^{j+1}$ leaves the bracket, the method takes a bisection step, which halves the bracket length. After at most $O(\log((U^0-L^0)/r))$ safeguarding steps, the bracket width is at most $r$, and the iterate satisfies $|\lambda^j-\lambda^\star|\le r$.

By Proposition~\ref{Prop:phi}, we have $\phi(\lambda^\star)>0$ and $\phi'$ is Lipschitz around the unique root $\lambda^\star$. Standard one-dimensional Newton analysis implies that there exists
$r>0$ such that if $|\lambda^j-\lambda^\star|\le r$, the Newton step stays within the neighborhood, is accepted, and the iterates converge $Q$-quadratically. The number of Newton iterations required to reach the tolerance $\epsilon$ is thus $O(\log\log(1/\epsilon))$. Combining the safeguarding phase and the local Newton phase yields the desired result. \hfill$\square$

\paragraph{Proof Proposition~\ref{Prop:antisym}.} Since $S\succeq 0$, the map $\eta\mapsto (\eta I+S)^{-1}$ is operator monotone decreasing and operator convex on $(0,\infty)$, which implies that $\Psi(\eta)$ is strictly decreasing and convex. Subtracting the linear term $\eta/(36\rho^2)$ preserves convexity and strict monotonicity. This yields the desired result. \hfill$\square$

\paragraph{Proof of Theorem~\ref{Thm:antisym_newton}.} By Proposition~\ref{Prop:antisym}, we have $\Phi'(\eta^0) < 0$, and it follows from the update formula that $\eta^1 > \eta^0 > 0$. Proposition~\ref{Prop:antisym} also guarantees the convexity and differentiability of $\phi$ which together with the update formula implies that 
\begin{equation*} 
\Phi(\eta^1) \geq \phi(\eta^0) + (\eta^1 - \eta^0)\Phi'(\eta^0) = 0. 
\end{equation*} 
Repeating this argument yields that $\eta^j > 0$ and $\Phi(\eta^j) \geq 0$. In addition, $\Phi$ is strictly decreasing. Thus, the iterates converge monotonically towards the unique solution. Suppose that $\eta^\star$ is the unique solution. Then, the mean value theorem implies 
\begin{equation*} 
\Phi(\eta^j) = \Phi(\eta^\star) + (\eta^j - \eta^\star)\Phi'(\tilde{\eta}) = (\eta^j - \eta^\star)\Phi'(\tilde{\eta}), \quad \textnormal{for some } \tilde{\eta} \in (\eta^j, \eta^\star). 
\end{equation*} 
Combining this with the update formula yields 
\begin{equation*} 
|\eta^\star - \eta^{j+1}| = \left|(\eta^\star - \eta^j)\left(1 - \tfrac{\Phi'(\tilde{\eta})}{\Phi'(\eta^j)}\right)\right| \leq |\eta^\star - \eta^j| \cdot \left|1 - \tfrac{\Phi'(\tilde{\eta})}{\Phi'(\eta^j)}\right|. 
\end{equation*} 
Since $\Phi$ is convex, we have $\Phi'(\lambda^0) \leq \Phi'(\lambda^j) \leq \Phi'(\tilde{\lambda}) \leq \Phi'(\lambda^\star) < 0$ which implies that 
\begin{equation*} 
0 \leq 1 - \tfrac{\Phi'(\tilde{\eta})}{\Phi'(\eta^j)} \leq 1 - \tfrac{\Phi'(\eta^\star)}{\Phi'(\eta^0)} < 1. 
\end{equation*} 
Putting everything together then implies that the convergence rate is globally $Q$-linear with a factor at least $1-\Phi'(\eta^\star)/\Phi'(\eta^0)$. The asymptotic $Q$-quadratic convergence of the Newton iteration follows. This together with $\lambda=\sqrt{\eta}$ yields the desired result. \hfill$\square$

\paragraph{Proof of Lemma~\ref{Lemma:nonuniform}.} Fixing $\z \in \br^{m+n}$, we obtain from Eq.~\eqref{construction:SSH} and~\eqref{def:nonuniform-construct} that $J(\z) = \frac{1}{|\SCal|}\sum_{j= 1}^{|\SCal|} J_j$ where each random matrix $J_j$ is random and satisfies that $\Prob(J_j = \frac{1}{p_i}\Lambda_i \Sigma_{ii} \Lambda_i^\top) = p_i$ with $p_i > 0$ in Eq.~\eqref{condition:nonuniform-dist}. For simplicity, we define 
\begin{equation*}
X_j = J_j - DF(\z) = J_j - \Lambda^\top\Sigma\Lambda, \qquad X = \sum_{j = 1}^{|\SCal|} X_j = |\SCal|(J(\z) - \Lambda^\top\Sigma\Lambda). 
\end{equation*}
It is easy to verify that $\EE[X_j] = 0$ and 
\begin{equation*}
\|\EE[X_j^2]\| \leq \left(\tfrac{1}{N}\sum_{i=1}^N \|DF_i(\sa_i^\top \x, \bb_i^\top \y)\|(\|\sa_i\|^2 + \|\bb_i\|^2)\right)^2 \overset{\textnormal{Eq.~\eqref{condition:nonuniform-jac-bound}}}{\leq} B_\textnormal{avg}^2. 
\end{equation*}
Applying the operator-Bernstein inequality yields
\begin{equation*}
\Prob(\|J(\z)- DF(\z)\| \geq \tau) = \Prob(\|X\| \geq \tau|\SCal|) \leq 2(m+n)\exp\left(\tfrac{\tau^2|\SCal|}{4B_\textnormal{avg}^2}\right) \leq \delta. 
\end{equation*}
This completes the proof. \hfill$\square$

\paragraph{Proof of Theorem~\ref{Thm:main-finitesum}.} Since Algorithm~\ref{Algorithm:SS-Newton} is a combination of Algorithm~\ref{Algorithm:Inexact-Newton} and the random sampling strategy in Eq.~\eqref{construction:SSH} with $0 < \tau_k \leq \min\{\tau_0, \tfrac{\rho(1 - \kappa_m)}{4(B_{\max} + 6\rho)}\|F(\hat{\z}_k)\|\}$ and $\kappa_H = B_{\max}$, we can obtain the desired results from Theorems~\ref{Thm:main-inexact} and~\ref{Thm:main-inexact-complexity} if the following statement holds true: 
\begin{equation}\label{Thm:Hessian-approx}
\Prob(\|J(\hat{\z}_k)- DF(\hat{\z}_k)\| \leq \tau_k \textnormal{ for all } 0 \leq k \leq T-1) \geq 1 - \delta.
\end{equation}
To guarantee an overall accumulative success probability of $1 - \delta$ across all $T$ iterations, it suffices to set the per-iteration failure probability as $1 - \sqrt[T]{1-\delta}$ as we have done in Algorithm~\ref{Algorithm:SS-Newton}. Moreover, we have $1 - \sqrt[T]{1-\delta} = O(\frac{\delta}{T}) = O(\delta\epsilon^{2/3})$. Since this failure probability has only been proven to appear in the logarithmic factor for the sample size in both Lemma~\ref{Lemma:uniform} and~\ref{Lemma:nonuniform}, the extra cost will not be dominating. As such, when Algorithm~\ref{Algorithm:SS-Newton} terminates, all of the Jacobian approximations have satisfied Eq.~\eqref{Thm:Hessian-approx}. This completes the proof.  \hfill$\square$

\end{document}